\newtheorem{theorem}{Theorem}[section]
\newtheorem{lemma}[theorem]{Lemma}
\newtheorem{proposition}[theorem]{Proposition}
\newtheorem{corollary}[theorem]{Corollary}
\theoremstyle{definition}
\theoremstyle{remark}
\newtheorem{remark}[theorem]{Remark}
\newcommand{\fe}{\mathrm{e}}
\newcommand{\eps}{\varepsilon}
\newcommand{\bR}{{\mathbb R}}
\newcommand{\bT}{{\mathbb T}}
\newcommand{\bN}{{\mathbb N}}
\newcommand{\abs}[1]{\left\vert#1\right\vert}
\newcommand{\norm}[1]{\left\Vert#1\right\Vert}
\numberwithin{equation}{section}
\begin{document}

\title[Geometric two-scale integrators]{Geometric two-scale integrators for highly oscillatory system: uniform accuracy and near conservations}

\author[B. Wang]{Bin Wang}\address{\hspace*{-12pt}B.~Wang: School of Mathematics and Statistics, Xi'an Jiaotong University, 710049 Xi'an, China}
\email{wangbinmaths@xjtu.edu.cn}\urladdr{http://gr.xjtu.edu.cn/web/wangbinmaths/home}

\author[X. Zhao]{Xiaofei Zhao}
\address{\hspace*{-12pt}X.~Zhao: School of Mathematics and Statistics \& Hubei Key Laboratory of Computational Science, Wuhan University, Wuhan, 430072, China}
\email{matzhxf@whu.edu.cn}
\urladdr{http://jszy.whu.edu.cn/zhaoxiaofei/en/index.htm}

%

\date{}

\dedicatory{}

\begin{abstract}
In this paper, we consider a class of highly oscillatory Hamiltonian systems which involve a scaling parameter $\eps\in(0,1]$. The problem arises from many physical models in some limit parameter regime or from some time-compressed perturbation problems. The solution of the model exhibits rapid temporal oscillations with $\mathcal{O}(1)$-amplitude and $\mathcal{O}(1/\eps)$-frequency, which makes classical numerical methods inefficient. We apply the two-scale formulation approach to the problem and propose two new time-symmetric numerical integrators. The methods are proved to have the uniform second order accuracy for all $\eps$ at finite times and some near-conservation laws in long times. Numerical experiments on a H\'{e}non-Heiles model, a nonlinear Schr\"{o}dinger equation and a charged-particle system illustrate the performance of the proposed methods over the existing ones.
 \\ \\
{\bf Keywords:} Highly oscillatory problem, two-scale formulation,  symmetric method, uniform accuracy, near-conservation laws, modulated Fourier expansion. \\ \\
{\bf AMS Subject Classification:} 65L05, 65L20, 65L70, 65M15.
\end{abstract}

\maketitle

\section{Introduction}
This work concerns  a highly oscillatory canonical Hamiltonian system:
\begin{subequations}\label{model}
  \begin{align}
  &\dot{u}(t)=J\nabla H(u)=J\left[\frac{1}{\eps}Mu(t)+\nabla H_1(u(t))\right],\quad t>0,\\
  &u(0)=u_0,
  \end{align}
\end{subequations}
where $\eps\in(0,1]$ is a given parameter, $u(t):[0,\infty)\to X$ is the unknown living in a finite dimensional space or some functional space $X$ with $u_0$ the given initial data, $H_1(\cdot)$ is a given function, $M$ is a self-adjoint operator on $X$ and $J$ is the symplectic operator of $X$.
Here $J$ and $M$ are independent of time.
Along the equation (\ref{model}), the \emph{Hamiltonian} or \emph{energy} $H(t)$ is conserved:
\begin{equation}\label{energy}H(t)=H(u(t)):={\frac{1}{2\eps}}\langle Mu(t),u(t)\rangle+H_1(u(t))\equiv H(0),\quad t\geq0,\end{equation}
where $\langle\cdot,\cdot\rangle$ denotes the inner product on $X$.

The model problem (\ref{model}) arises from many different physical contexts, and the parameter $\eps$ is often introduced as some scaled physical quantity. In some limit physical regime, $\eps$ could be very small, i.e., $0<\eps\ll1$, and the solution of (\ref{model}) then becomes highly oscillatory in time. A precise example in the classical Newtonian dynamics could be the Fermi-Pasta-Ulam-Tsingou problem or the molecular dynamics \cite{Cohen thesis,Lubich}, where (\ref{model}) denotes a finite dimensional dynamical system. In such case, $J$ is the symplectic matrix, $M$ is a positive semi-definite diagonal matrix and $\eps$ is inversely proportional to the stiffness of the springs or the frequency of the atoms. For such so-called highly oscillatory differential equations, under the energy bounded condition, i.e., $H(t)$ being uniformly bounded as $\eps\to0$, extensive works have been done to understand the asymptotics of the equation (\ref{model}) and to design numerical methods with large time step and good long-time behaviour in the literature \cite{Cohen thesis,Cohen1,MD,Grimm,Hairer,Hochbruck1,WIW,Wu}. Note that the energy bounded condition $H(t)=\mathcal{O}(1)$ as $\eps\to0$ in (\ref{energy}) implies some small initial data assumption on $u_0$ (at least for some components). The large initial data case \cite{Zhao} would induce wilder oscillations in the solution $u(t)$ and consequently requests extra efforts in the design of numerical methods and analysis.
On the other hand, (\ref{model}) can also represent some multiscale partial differential equations. This occurs usually in quantum physics models. For instance, the nonlinear Klein-Gordon equation in the non-relativistic limit regime \cite{Masmoudi}, where $\sqrt{\eps}$ denotes the inverse of the speed of light,  can be formulated into the form of (\ref{model}) by some change of variable \cite{PI1,KGC,UAKG}. In such case, the initial data $u_0$ is considered as a $\mathcal{O}(1)$ function in the space variable and the energy (\ref{energy}) is unbounded as $\eps\to0$ \cite{Masmoudi}.

Another class of problem that is closely related to (\ref{model}) is the long-term dynamics of a perturbed Hamiltonian system \cite{Faou,Gauckler,Lubich,Kuksin}:
\begin{equation}\label{perturb}
\dot{\mathfrak{u}}\left(\mathfrak{t}\right)=J\left[M\mathfrak{u}\left(\mathfrak{t}\right)
+\eps\nabla H_1\left(\mathfrak{u}\left(\mathfrak{t}\right)\right)\right],\quad \mathfrak{t}>0,\quad
\mathfrak{u}(0)=u_0.\end{equation}
It has been a classical problem for many years.
When $\nabla H_1(\mathfrak{u})$ is a higher order nonlinearity of $\mathfrak{u}$,
 (\ref{perturb}) is equivalent to a non-perturbation problem but with small initial data. Here we consider the same $u_0=\mathcal{O}(1)$ for (\ref{perturb}) as in (\ref{model}).
Then, it is clear that  (\ref{model}) and (\ref{perturb}) is connected by the relation: \begin{equation}\label{connect}
\mathfrak{t}=t/\eps,\quad u(t)=\mathfrak{u}(\mathfrak{t}).
\end{equation}
 That is to say (\ref{model}) can be considered as the compression of (\ref{perturb}) from a large time interval, and {hence} the solution $u(t)$ of (\ref{model}) is highly oscillatory in the time scale of $t$ for small $\eps$. For the perturbed problem (\ref{perturb}), deep theories such as the KAM theory \cite{Lubich,Kuksin}, and powerful tools such as the normal form transform \cite{Lubich,Kuksin} and the modulated Fourier expansion \cite{Cohen1,Cohen2,Lubich ICM,Lubich} have been developed for the long-time analysis. Numerically to integrate (\ref{perturb}) for a long time, trigonometric/exponential integrators \cite{Cohen thesis,Cohen0,Lubich,WW} and time-splitting methods \cite{Feng, Faou,Faou1,Faou2,Gauckler,Gauckler thesis,Lubich2} have been considered and analyzed. Though these methods under certain conditions could offer nice near-conservation laws for long times, the use of a time step $\Delta \mathfrak{t}=\mathcal{O}(1)$ for integrating (\ref{perturb}) over a large time interval, e.g., a size of $\mathcal{O}(1/\eps)$, makes the practical computations inefficient.

In this work, we focus on numerically solving the highly oscillatory scaling problem (\ref{model}). Though mathematically (\ref{model}) and (\ref{perturb}) are equivalent, numerically they are not. On the one hand, by solving (\ref{model}) to a long time numerically, our computation could indeed get us to a `longer' time than that for (\ref{perturb}). On the other hand, we are interested in solving (\ref{model}) accurately and efficiently with a time step $\Delta t=\mathcal{O}(1)$ for all the $\eps\in(0,1]$.
Note that $\Delta t=\mathcal{O}(1)$ for integrating (\ref{model}) corresponds to a time step $\Delta \mathfrak{t}=\mathcal{O}(1/\eps)$ for (\ref{perturb}). This sounds too good to be true for standard numerical discretizations on (\ref{model}) due to the unbounded temporal derivatives of the solution $u(t)$ as $\eps\to0$. The  methods usually require $\Delta t$ being bounded by $\eps$ to some extent for the convergence and stability. While, such property can be obtained on (\ref{model})
by the so-called \emph{uniformly accurate (UA)} methods, whose accuracy and computational costs are independent of $\eps\in(0,1]$.

In the past few years, UA methods have been extensively constructed via different approaches. For example, it can be constructed by some multiscale expansion of the solution which decomposes the original equation into several parts to solve \cite{Baocai,BCZ,Baosu,Cohen thesis,NUA,Zhao}. Also, it can be constructed by some filtered  variable followed by the Picard-type iterations \cite{PI1,PI2}. Moreover, the averaging theory can give rise to the UA schemes as well \cite{UAKG,Jin,2scale multi,APVP2d,NUA,autoUA}. The UA property in these approaches can be established for (\ref{model}) till a fixed finite time (independent of $\eps$). While, a pity within most of the UA methods so far (one exception: the pullback method \cite{NUA} with numerical evidence but yet to analyze) is the lack of good behaviour for solving (\ref{model}) to a long time.  The numerical errors of the invariants such as the energy (\ref{energy}) increase quickly as time evolves \cite{KGC,vp3D,NUA,UAVP4d}.
The reason is due to the loss of the geometric structure of the equation under the aforementioned approaches. For instance, the mutiscale expansion approach involves a remainder's equation that is non-autonomous \cite{BCZ,NUA}, and the direct Picard-type iteration breaks the geometry \cite{PI1}. Thus, it remains a challenge to get a geometric UA scheme.

The aim of the work is to propose and analyze some UA schemes with good long-time conservation laws for solving (\ref{model}) for $\eps\in(0,1]$. To get the UA property, we consider the \emph{two-scale formulation} approach presented in \cite{UAKG,APVP2d}. The approach works by separating the principle fast time scale out in the solution as an  extra variable, and then solves numerically the augmented equation (called as the two-scale equation). So far, some finite difference schemes \cite{UAKG,APVP2d} and exponential integrators \cite{vp3D,UAVP4d} have been proposed to discretize the two-scale equation in time for achieving the UA property. However, numerical blow-ups or energy drifts have been observed for the long-time computing in applications \cite{KGC,vp3D,UAVP4d}. It is unclear whether this is due to the extended dimension in the two-scale equation and/or its discretization. To address this concern, we propose some symmetric integrators for solving the two-scale equation, which we call the \emph{symmetric exponential-type two-scale
integrators (SE-TSIs)}. We prove the UA property of SE-TSIs as the solver for (\ref{model}) at finite times. Then, we apply the proposed methods to a H\'{e}non-Heiles model \cite{Henon}, a nonlinear Schr\"odinger equation (NLS) \cite{SAV} and a charged-particle dynamics (CPD) \cite{Frenod} in the form of (\ref{model}), where our numerical results not only verify the second order UA convergence rate, but also illustrate the near conservations in long times for some widely concerned quantities, such as the mass and the energy.
Analysis of the proposed SE-TSIs in long times are done in the end by the technique of the modulated Fourier expansions which reveals the near-conservation laws rigorously.

The rest of the paper is organized as follows. In Section \ref{sec:2review}, we briefly review the framework of the two-scale formulation, and then we present the SE-TSI schemes as well as  their finite-time convergence results. Applications to the H\'{e}non-Heiles, NLS and CPD models are made in Section \ref{sec:4num} with numerical results presented to show the UA property and the near conservations. In Section \ref{sec:5LTA}, we perform the long-time analysis for the SE-TSIs, and some conclusions are drawn in Section \ref{sec:6con}.

\section{Numerical method and uniform accuracy}\label{sec:2review}
In this section, we shall firstly review the two-scale formulation strategy for the highly oscillatory problem (\ref{model}), as well as some existing uniformly accurate numerical solvers based on the strategy. Then, we shall present the symmetric two-scale integrators and their finite-time convergence results.

\subsection{Two-scale formulation in a nutshell}\label{sec2.1}
In most of the application cases, the operator $M$ in (\ref{model}) is diagonal, and it commutes with $J$, i.e., $JM=MJ$. Moreover, for the simplicity of the presentation, we assume in the following that the propagator $\tau\to\fe^{JM\tau}$ generated by the stiff part in (\ref{model}) is $2\pi$-periodic. If it is not directly the case, one may decompose a leading part from $M$, i.e.,
$M=M_0+O(\eps)$ by some asymptotical expansion \cite{UAKG} or the Diophantine approximation \cite{2scale multi}, where the resulting $M_0$ has all the eigenvalues being integer-multiples of a common frequency (mono-frequency).

 By a change of variable $v(t)=\fe^{-JMt/\eps}u(t)$, the equation (\ref{model}) becomes:
\begin{equation}\label{filtered}
\dot{v}(t)=\fe^{-JMt/\eps}J\nabla H_1\left(\fe^{JMt/\eps}v(t)\right),\quad t>0,\quad v(0)=u_0.
\end{equation}
For simplicity, we denote
\begin{equation}\label{ftau def}
f_\tau(v):=\fe^{-JM\tau}J\nabla H_1\left(\fe^{JM\tau}v\right).
\end{equation}
  Under the assumption, $f_\tau(v)$ is a periodic function in $\tau$ with period $2\pi$.
The two-scale formulation works by separating the fast time variable $t/\eps$ in (\ref{filtered}) out from the slow variable $t$ and assuming it as another independent variable $\tau$. Then, the two-scale equation of (\ref{filtered}) or (\ref{model}) reads \cite{UAKG,APVP2d}
\begin{numcases}
  \,\partial_tU(t,\tau)+\frac{1}{\eps}\partial_\tau U(t,\tau)=f_\tau(U(t,\tau)),\quad t>0,\ \tau\in\bT,\label{2scale eq}\\
  U(0,0)=u_0,\nonumber
\end{numcases}
where $\bT=(0,2\pi)$ is a torus, and $U(t,\tau)$ is the unknown satisfying the periodic boundary condition in $\tau$.
Under the condition $U(0,0)=u_0$ at the initial time, by the uniqueness of the solution of (\ref{model}), the two-scale solution of (\ref{2scale eq}) gives the original solution of (\ref{model}) by $$U(t,t/\eps)=v(t)=\fe^{-JMt/\eps}u(t),\quad t\geq0.$$

The variable $\tau\in\bT$ offers a degree of freedom for choosing the initial data $U(0,\tau)$ for the two-scale equation (\ref{2scale eq}). By constructing the suitable $U(0,\tau)$, one is able to make $U(t,\tau)$ as smooth as possible in terms of $\eps$, i.e., for any desired $k\in\bN$,
$
\partial_t^jU(t,\tau)=\mathcal{O}(1),\  \eps\to0,\  j=0,1,\ldots,k.
$
 Such so-called \emph{$k$-th order well-prepared initial data} can be obtained in two means. As originally proposed in \cite{UAKG}, by treating the stiff part $1/\eps\partial_\tau U$ as a kind of collision term in the kinetic theory, one can perform a Chapman-Enskog expansion for the two-scale solution.  For the later use, let us briefly describe such process in the following for the initial data up to the second order.

 Denote $L=\partial_\tau$ and the averaging operator $\Pi w:=\frac{1}{2\pi}\int_0^{2\pi}w(s)ds$ for some $w(\cdot)$ on $\bT$. The Chapman-Enskog expansion reads: $U(t,\tau)=\underline{U}(t)+r(t,\tau)$ with $\underline{U}=\Pi U$. It decomposes (\ref{2scale eq}) into
 $$\dot{\underline{U}}(t)=\Pi f_\tau(U(t,\tau)),\quad \partial_t r+\frac{1}{\eps}
 Lr=(I-\Pi)f_\tau(U),$$
 which implies $r=\eps \mathcal{A}f_\tau(U)-\eps L^{-1}\partial_t r$ and $\partial_tr=\eps \mathcal{A}\nabla f_\tau(U)(\dot{\underline{U}}+\partial_tr)-\eps L^{-1}\partial_t^2r$ with $\mathcal{A}:=L^{-1}(I-\Pi)$.
 By assuming $\partial_t^2r=O(1)$ which is desired, one finds $r,\,\partial_tr=O(\eps)$ and so
 $r=\eps \mathcal{A}f_\tau(\underline{U})+O(\eps^2)$. Note that by the condition
 $u_0=U(0,0)=\underline{U}(0)+r(0,0)$, one has
 $U(0,\tau)=u_0+r(0,\tau)-r(0,0)$ and
 $r(0,\tau)=\eps \mathcal{A} f_\tau(u_0)+O(\eps^2)$. Thus, the second order initial data defined as
 \begin{equation}\label{2nd data}
   U(0,\tau)=u_0+r_{2nd}(0,\tau)-r_{2nd}(0,0),\quad r_{2nd}(0,\tau):=\eps \mathcal{A} f_\tau(u_0),
 \end{equation}
 gives the correct leading order terms for the solution at $t=0$ when $\partial_t^2U=O(1)$.
 Indeed, by taking (\ref{2nd data}) for (\ref{2scale eq}), the desired key property
holds as following (see more details in \cite{UAKG}). One can simply think of the finite dimensional case of $X$, but it is also true for the functional space case.
\begin{proposition}\emph{(\cite{UAKG,autoUA})} \label{prop1}
 Let $u_0\in K$ with $K$ a bounded open subset of $X$, and assume that the solution of \eqref{model} stays in $K$ for $t\in[0,T_0]$ with some $T_0>0$. Let $H_1(u)$ be smooth  and uniformly bounded on the closure of $K$ for all $\eps$. Then for \eqref{2scale eq} associated with the initial data \eqref{2nd data}, there exist some constant $\eps_0>0$ and some constant $C>0$  independent of $\eps$ such that for $\eps\in (0,\eps_0]$,
\begin{equation}\label{bound}
\sup_{t\in(0,T_0)}\left\{\left\|\partial_t^{k}U(t,\cdot)\right\|_{H^1_\tau(X)}\right\}\leq C,\quad k=0,1,2.
\end{equation}
\end{proposition}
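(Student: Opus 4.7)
The plan is to turn the Chapman--Enskog heuristic sketched above \eqref{2nd data} into a rigorous bootstrap. Writing $U=\underline U+r$ with $\underline U=\Pi U$ and $r=(I-\Pi)U$, the two-scale equation \eqref{2scale eq} splits into the slow ODE $\dot{\underline U}=\Pi f_\tau(\underline U+r)$ and the fast balance $\partial_t r+\eps^{-1}Lr=(I-\Pi)f_\tau(U)$. Composing the fast balance with the pseudo-inverse $\mathcal{A}=L^{-1}(I-\Pi)$ and using $\mathcal{A}Lr=r$ on the zero-mean subspace produces the fixed-point identity $r=\eps\mathcal{A}f_\tau(U)-\eps\mathcal{A}\partial_t r$. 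Differentiating this identity in $t$ twice, using the chain rule on $f_\tau$, yields representations for $\partial_t r$ and $\partial_t^2 r$ each carrying an overall $\eps$-prefactor. The strategy is then to prove that $\partial_t^k r=\mathcal{O}(\eps)$ in $H^1_\tau(X)$ for $k=0,1,2$ on $[0,T_0]$, from which \eqref{bound} follows by adding the uniform $\mathcal{O}(1)$ bound on $\partial_t^k\underline U$ that the non-stiff slow ODE yields automatically.

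I would initialize the bootstrap directly from \eqref{2nd data}: the definition gives $r(0,\cdot)=\eps\mathcal{A}f_\tau(u_0)+\mathcal{O}(\eps^2)=\mathcal{O}(\eps)$ in $H^1_\tau$, and evaluating $\partial_t r$ and $\partial_t^2 r$ at $t=0$ from the evolution equation (and its $t$-derivative) gives the corresponding $\mathcal{O}(\eps)$ bounds; this is precisely what ``second-order well-prepared'' is designed to encode. On a maximal subinterval $[0,T^*)\subset[0,T_0]$ where $U$ stays in a slight enlargement of $K$, I would then impose the bootstrap hypothesis $\sum_{k=0}^2\|\partial_t^k r(t,\cdot)\|_{H^1_\tau(X)}\leq C_\star\eps$. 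The smoothness of $H_1$ on $\overline K$ gives uniform bounds on $\nabla^jH_1$ for $j\leq 3$, hence uniform Lipschitz bounds on $v\mapsto f_\tau(\underline U+v)$ and its first two $v$-derivatives along the trajectory; combined with the boundedness of $\mathcal{A}\colon L^2_\tau\to H^1_\tau$, the three representations reproduce the same bound with a prefactor of the form $C_1\eps+C_2 C_\star\eps$. After absorbing the $\underline U$-variation through a Gr\"onwall step, this improves the bootstrap for $\eps$ sufficiently small and forces $T^*=T_0$.

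The main obstacle is the bookkeeping of the higher $t$-derivatives in the functional-space setting (e.g.\ NLS or Klein--Gordon). Two time derivatives of $f_\tau(U)$ produce nonlinear terms of the form $\nabla^2H_1(U)[\partial_t U,\partial_t U]$ and $\nabla^3 H_1(U)[\partial_t U,\partial_t U,\partial_t U]$, which must be controlled in $H^1_\tau(X)$ rather than in a weaker norm; in the finite-dimensional case this reduces to a chain rule plus Gr\"onwall, but when $X$ is a function space one needs $H^1_\tau(X)$ to behave as an algebra for the compositions that appear. This is where the smoothness and boundedness of $H_1$ on $\overline K$ is essentially used, and the required tame estimates can be carried through along the lines of \cite{UAKG,autoUA}.
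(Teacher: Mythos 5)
Your overall architecture (Chapman--Enskog split $U=\underline U+r$, bootstrap on the remainder, Gr\"onwall, smoothness of $H_1$ on $\overline K$ to control the compositions) is exactly the route the paper takes --- its proof is a one-line sketch deferring to \cite{UAKG}: ``take the $k$-th order time derivative of \eqref{2scale eq} and use the bootstrap type argument.'' However, your bootstrap ansatz is quantitatively wrong at the top order, and this is not a cosmetic issue. With the \emph{second}-order data \eqref{2nd data} one only gets $\partial_t^2 r=\mathcal{O}(1)$, not $\mathcal{O}(\eps)$. The fixed-point identity at second order reads $\partial_t^2 r=\eps\,\mathcal{A}\,\partial_t^2\!\left[f_\tau(U)\right]-\eps L^{-1}\partial_t^3 r$, so concluding $\partial_t^2 r=\mathcal{O}(\eps)$ requires $\partial_t^3 r=\mathcal{O}(1)$, i.e.\ third-order well-prepared data; with \eqref{2nd data} one generically has $\partial_t^3 U=\mathcal{O}(1/\eps)$. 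Your initialization already fails: at $t=0$ one computes $\partial_t^2 r(0,\cdot)=(I-\Pi)\bigl[\nabla f_\tau(U)\partial_t U\bigr](0,\cdot)-\eps^{-1}L\,\partial_t r(0,\cdot)$, and the second term is only $\mathcal{O}(1)$, not $\mathcal{O}(\eps)$. This is precisely the distinction the paper draws between the second- and third-order prepared data (FD-TSI needs the latter because its truncation error sees $\partial_t^3U$; the SE-TSIs only need $\partial_t^2U=\mathcal{O}(1)$).

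The fix is local and keeps your structure: run the bootstrap with $\|r\|_{H^1_\tau}+\|\partial_t r\|_{H^1_\tau}\le C_\star\eps$ and, separately, $\|\partial_t^2 r\|_{H^1_\tau}\le C_\star$. The first two levels close via your fixed-point representations (which only invoke $\partial_t r$ and $\partial_t^2 r$, both controlled). For the $k=2$ level do not use the fixed-point identity at all --- it reaches outside the bootstrap set --- but instead apply Duhamel to the twice-differentiated equation $\partial_t(\partial_t^2 r)+\eps^{-1}L(\partial_t^2 r)=(I-\Pi)\,\partial_t^2 f_\tau(U)$, using that $\fe^{-tL/\eps}$ is an isometry on $H^1_\tau(X)$, so that $\|\partial_t^2 r(t)\|\le\|\partial_t^2 r(0)\|+\int_0^t\|\partial_t^2 f_\tau(U)(s)\|\,ds$; the integrand is $\mathcal{O}(1)$ under the bootstrap hypotheses and Gr\"onwall closes the estimate. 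The conclusion \eqref{bound} only needs $\partial_t^2U=\mathcal{O}(1)$, so nothing is lost. Your remarks on the algebra structure of $H^1_\tau(X)$ and the role of the smoothness of $H_1$ are fine as they stand.
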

\begin{proof}
 By (\ref{ftau def}), the smoothness of $H_1(\cdot)$ guarantees the boundedness of $f_\tau(\cdot)$ and its derivative with respect to $\tau$. The rest of the proof follows \cite{UAKG} by taking the $k$-th order time derivative of (\ref{2scale eq}) and using the bootstrap type argument.
\end{proof}

The uniform boundedness of the derivatives (\ref{bound}) give rise to possible numerical approximations of (\ref{2scale eq}) that are uniformly accurate (UA) in $\eps$.
 Alternatively, $U(0,\tau)$ can be constructed based on the averaging theory \cite{SAV,Sanders}. 
{In such a way}, one can get the well-prepared initial data to arbitrary order in a recursive manner and we refer the readers to \cite{autoUA} for more details.


Now we review some existing two-scale numerical integrators that were proposed to get the second order UA property. These methods will be the benchmark for our numerical illustrations later.
Let $h=\Delta t>0$ be the time step with $t_n=nh$ for $n\geq0$, and denote the numerical solution for (\ref{2scale eq}) as $U^n=U^n(\tau)\approx U(t_n,\tau)$ with $U^0=U(0,\tau)$. A finite difference two-scale integrator (FD-TSI) was proposed in \cite{UAKG}:
\begin{equation}\label{FD scheme}
  \left\{\begin{split}
  &\frac{U^{n+\frac12}-U^n}{h/2}+\frac{1}{\eps}\partial_\tau U^{n+\frac12}=f_\tau\left(U^{n}\right),\\
  &\frac{U^{n+1}-U^n}{h}+\frac{1}{2\eps}\partial_\tau\left(U^{n+1}
  +U^n\right)=f_\tau\left(U^{n+\frac12}\right),\quad n\geq0.
  \end{split}\right.
\end{equation}
It can be checked directly that the leading part of the local truncation error depends on
$\partial_{t}^3U$. Therefore, by taking the third order initial data \cite{UAKG}, one can get the uniform second order accuracy from (\ref{FD scheme}) for solving (\ref{model}). To reduce the required order of the prepared initial data, a multistep exponential-type two-scale integrator (ME-TSI) was proposed in \cite{UAVP4d}:
\begin{numcases}
\,U^{n+1}=\fe^{-\frac{h\partial_\tau}{\eps}}U^n+h\varphi_1(-h\partial_\tau/\eps)f_\tau\left(U^n\right)+
  h\varphi_2(-h\partial_\tau/\eps)\left[f_\tau\left(U^n\right)-f_\tau\left(U^{n-1}\right)\right],\quad  n\geq1,\nonumber \\
  U^{1}=\fe^{-\frac{h\partial_\tau}{\eps}}U^0+h\varphi_1(-h\partial_\tau/\eps)f_\tau\left(U^0\right)+
  h\varphi_2(-h\partial_\tau/\eps)\left[f_\tau\left(U^*\right)-f_\tau\left(U^{0}\right)\right], \label{EI scheme}\\ U^*=\fe^{-\frac{h\partial_\tau}{\eps}}U^0+h\varphi_1(-h\partial_\tau/\eps)f_\tau\left(U^0\right),\nonumber
  \end{numcases}
with the {notation} $\varphi_k(z)=\int_0^1
\theta^{k-1}\fe^{(1-\theta)z}/(k-1)!d\theta$ for $k=1,2$ from \cite{Ostermann}. It can be seen that the local truncation error of ME-TSI depends on $\partial_{t}^2U$, and so the second order data (\ref{2nd data}) is enough for the uniform second order accuracy. For the $\tau$-variable in the above two schemes, thanks to the periodic boundary condition in (\ref{2scale eq}),  the Fourier pseudospectral method \cite{Shen,Trefethen} can be further applied  to get the full discretizations, which works efficiently in practice via the fast Fourier transform.

The above two numerical methods are UA, but in practical applications they only work till a short time. Numerical blow-ups or rapid energy drifts have been observed in the long-time computations \cite{KGC,vp3D,autoUA,UAVP4d}. Although a practical restart strategy (stop (\ref{2scale eq}) at a fixed time period, re-prepare the initial data and restart the method) has been proposed in \cite{vp3D,UAVP4d} to enhance the performance, there is still a significant energy drift and it was not understood  whether this is due to the two-scale formulation or the numerical method.
Here we note that the two-scale equation (\ref{2scale eq}) or the constructed data
(\ref{2nd data}) only enlarges the dimension but breaks nothing, and so the system should maintain the original geometry of (\ref{model}) in a lower dimension. Such belief motivates us to consider some numerical integrators on (\ref{2scale eq}) that could be both UA and geometric.

\subsection{Symmetric two-scale integrators}\label{sec:3S-TSI}
Based on the Duhamel formula for (\ref{2scale eq}):
\begin{equation}\label{Duhamel}
U(t_n+s)=\fe^{-\frac{s\partial_\tau}{\eps}}U(t_n)+\int_0^s\fe^{(\theta-s)\frac{\partial_\tau}{\eps}}
f_\tau(U(t_n+\theta))d\theta,\quad 0\leq s\leq h,\quad n\geq0,
\end{equation}
we propose here two \emph{symmetric exponential-type two-scale integrators (SE-TSIs)} for solving the equation (\ref{2scale eq}).
The first one is an one-stage type scheme (referred as \textbf{SE1-TSI}):
\begin{subequations}\label{S1}
\begin{numcases}%
\,U^{n+\frac{1}{2}}=\fe^{-\frac{h\partial_\tau}{2\eps}}U^n+\frac{1}{2}h\varphi_1 \left(-h\partial_\tau/(2\eps)\right)
f_\tau\left(U^{n+\frac{1}{2}}\right),\label{S1 a}\\
U^{n+1}=\fe^{-\frac{h\partial_\tau}{\eps}}U^n+h
\varphi_1 (-h\partial_\tau/\eps)f_\tau\left(U^{n+\frac{1}{2}}\right),\quad n\geq0.\label{S1 b}
\end{numcases}
\end{subequations}
The second one is an average-vector-field type {\cite{Li,AVF}} scheme (referred as \textbf{SE2-TSI}):
\begin{equation}\label{S2}
U^{n+1}=\fe^{-\frac{h\partial_\tau}{\eps}}U^n+h
\varphi_1 (-h\partial_\tau/\eps)\int_{0}^1
f_\tau\left((1-\rho) U^{n}+\rho U^{n+1}\right) d\rho.
\end{equation}

The existence of the numerical solution in the nonlinear equation of (\ref{S1}) or (\ref{S2}) is clearly guaranteed by the implicit function theorem.
The full discretizations of the two can be obtained again by the Fourier pseudospectral method.
Moreover, we have the time-symmetric property for the two SE-TSIs stated as follows.
\begin{proposition}\emph{(Time symmetry).}
  The schemes SE1-TSI \eqref{S1} and SE2-TSI \eqref{S2} are symmetric in time, i.e., by exchanging $n+1\leftrightarrow n$ and $h\leftrightarrow -h$, \eqref{S1} and \eqref{S2} remain the same.
\end{proposition}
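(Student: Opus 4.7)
The plan is to verify directly that each scheme is invariant under the substitution $n+1 \leftrightarrow n$, $h \leftrightarrow -h$, relying on the elementary identity $\varphi_1(-z)=\fe^{-z}\varphi_1(z)$, which follows from $\varphi_1(z)=(\fe^z-1)/z$ and continues to hold when $z$ is replaced by the operators $\pm h\partial_\tau/\eps$ or $\pm h\partial_\tau/(2\eps)$ (which all commute).

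For SE2-TSI, this is essentially a one-line computation. Applying the substitution to \eqref{S2} yields an equation for $U^n$ in terms of $U^{n+1}$; multiplying through by $\fe^{-h\partial_\tau/\eps}$ and invoking the identity above restores the prefactor $h\varphi_1(-h\partial_\tau/\eps)$, while the change of variable $\rho\mapsto 1-\rho$ inside the integral swaps the two convex combinations of $U^n$ and $U^{n+1}$, reproducing \eqref{S2} exactly.

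For SE1-TSI, the cleanest route is to exhibit the scheme as $\Phi^{*}_{h/2}\circ\Phi_{h/2}$, where $\Phi_{h/2}$ denotes the implicit exponential Euler half-step in \eqref{S1 a} and $\Phi^{*}_{h/2}$ is its adjoint. Using the operator factorization
\[
h\varphi_1(-h\partial_\tau/\eps)=\tfrac{h}{2}\varphi_1(-h\partial_\tau/(2\eps))\bigl(I+\fe^{-h\partial_\tau/(2\eps)}\bigr),
\]
which comes from $1-\fe^{-z}=(1-\fe^{-z/2})(1+\fe^{-z/2})$, together with \eqref{S1 a} to eliminate $\fe^{-h\partial_\tau/\eps}U^n$, one can recast \eqref{S1 b} as
\[
U^{n+1}=\fe^{-h\partial_\tau/(2\eps)}U^{n+1/2}+\tfrac{h}{2}\varphi_1(-h\partial_\tau/(2\eps))f_\tau(U^{n+1/2}).
\]
A short computation using $\varphi_1(-z)=\fe^{-z}\varphi_1(z)$ shows that this explicit half-step coincides with the adjoint $\Phi^{*}_{h/2}=\Phi^{-1}_{-h/2}$ of the implicit step \eqref{S1 a}. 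The standard fact that a composition of the form $\Phi^{*}_{h/2}\circ\Phi_{h/2}$ is automatically time-symmetric, since $(\Phi^{*}_{h/2}\circ\Phi_{h/2})^{*}=\Phi^{*}_{h/2}\circ(\Phi^{*}_{h/2})^{*}=\Phi^{*}_{h/2}\circ\Phi_{h/2}$, then concludes the proof for SE1-TSI.

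The only real technical obstacle is the operator factorization displayed above; once it is in place, the symmetry structure of both schemes is transparent, and the remaining manipulations are routine applications of the identity $\varphi_1(-z)=\fe^{-z}\varphi_1(z)$ and the involution property of the adjoint.
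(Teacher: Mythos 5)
Your proposal is correct, but for SE1-TSI it takes a genuinely different route from the paper. The paper proves symmetry by brute force: it applies the substitution $n+1\leftrightarrow n$, $h\leftrightarrow -h$ to \eqref{S1 a}, inserts the (unchanged) update \eqref{S1 b} to eliminate $U^{n+1}$, and recovers \eqref{S1 a} via the identity $\fe^{z/2}\varphi_1(-z)-\tfrac12\varphi_1(z/2)=\tfrac12\varphi_1(-z/2)$. You instead expose the structural reason for the symmetry: using the factorization $\varphi_1(-z)=\tfrac12\varphi_1(-z/2)\bigl(I+\fe^{-z/2}\bigr)$ you rewrite \eqref{S1 b} as an explicit exponential Euler half-step from $U^{n+1/2}$, identify it as the adjoint of the implicit half-step \eqref{S1 a}, and invoke the standard fact that $\Phi^{*}_{h/2}\circ\Phi_{h/2}$ is symmetric. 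I checked the two key computations: the displayed factorization is equivalent (after combining with \eqref{S1 a}) to the paper's identity, and the explicit half-step is indeed $\Phi_{-h/2}^{-1}$ of the implicit one, so the argument closes. What your approach buys is interpretability — SE1-TSI is revealed as the exponential analogue of the implicit--explicit Euler composition, which also explains a priori why it should be symmetric and suggests how to build higher-order symmetric compositions; what the paper's direct verification buys is self-containedness, needing no appeal to the adjoint calculus. For SE2-TSI both arguments are the same direct check; in fact you are more careful than the paper, which calls the symmetry ``direct to see'' without mentioning the change of variable $\rho\mapsto 1-\rho$ in the integral that your write-up correctly supplies.
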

\begin{proof}
Note that $\varphi_1(z)=(\fe^{z}-1)/z$ and $\fe^{-z}\varphi_1(z)=\varphi_1(-z)$, and then it is direct to see the symmetry in (\ref{S2}) for SE2-TSI and the symmetry in (\ref{S1 b}) for SE1-TSI. As for (\ref{S1 a}), under $n+1\leftrightarrow n$ and $h\leftrightarrow -h$, it becomes
$U^{n+\frac{1}{2}}=\fe^{\frac{h\partial_\tau}{2\eps}}U^{n+1}-\frac{1}{2}h\varphi_1 \left(h\partial_\tau/(2\eps)\right)f_\tau(U^{n+\frac{1}{2}})$. Since (\ref{S1 b}) is unchanged and note $\fe^{z/2}\varphi_1(-z)-\frac12\varphi_1(z/2)=\frac12\varphi_1(-z/2)$, so we find
\begin{align*}
 U^{n+\frac{1}{2}}&=\fe^{\frac{h\partial_\tau}{2\eps}}\left[\fe^{-\frac{h\partial_\tau}{\eps}}
 U^n+h\varphi_1(-h\partial_\tau/\eps)f_\tau\left(U^{n+\frac{1}{2}}\right)\right]
 -\frac{1}{2}h\varphi_1 \left(h\partial_\tau/(2\eps)\right)f_\tau\left(U^{n+\frac{1}{2}}\right)\\
 &=\fe^{-\frac{h\partial_\tau}{2\eps}}U^n+\frac{1}{2}h\varphi_1 \left(-h\partial_\tau/(2\eps)\right)
f_\tau\left(U^{n+\frac{1}{2}}\right),
\end{align*}
which verifies the assertion.
\end{proof}

For the finite-time convergence of the two SE-TSIs, we have the following result showing that
they offer the uniform second order accuracy as numerical methods for solving \eqref{model}, if one prepares the initial data \eqref{2nd data} for the two-scale equation \eqref{2scale eq}.

\begin{proposition}\label{prop2}\emph{(Uniform convergence).} Consider the initial data \eqref{2nd data} for \eqref{2scale eq} and let $U^n$ be obtained from SE1-TSI \eqref{S1} or SE2-TSI \eqref{S2}. Under the assumption of Proposition \ref{prop1}, there exist constants $h_0,C>0$ independent of $\eps$ such that for $0<h\leq h_0$,
 $$\|U^n(t_n/\eps)-u(t_n)\|_{X}\leq Ch^2,\quad 0\leq n\leq T_0/h.$$
\end{proposition}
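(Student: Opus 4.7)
The plan is to prove the result by a consistency plus stability argument on the two-scale equation \eqref{2scale eq} and to transfer it to the original variable via the identity $u(t)=\fe^{JMt/\eps}U(t,t/\eps)$. Since $\fe^{JMt/\eps}$ is norm-preserving on $X$ (as $J$ is symplectic and $M$ self-adjoint), it is enough to control $\|U^n(\cdot)-U(t_n,\cdot)\|$ in a norm that embeds into $L^\infty_\tau(X)$, e.g.\ $H^1_\tau(X)$, and then evaluate at $\tau=t_n/\eps$. The essential structural input is Proposition \ref{prop1}, which guarantees that the exact two-scale trajectory $U(t,\tau)$ launched from the initial datum \eqref{2nd data} has $\partial_t^{k}U$ bounded uniformly in $\eps$ for $k=0,1,2$.

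First I would set up the local truncation error. Substituting the exact $U(t_n,\cdot)$ into \eqref{S1} or \eqref{S2} and subtracting off the Duhamel identity \eqref{Duhamel} yields a defect $d^{\,n}$, whose analysis splits into two pieces: the operator factors $\fe^{-h\partial_\tau/\eps}$ and $\varphi_1(-h\partial_\tau/\eps)$ are bounded uniformly in $h$ and $\eps$ on $L^2_\tau(X)$ because $\partial_\tau$ is skew-adjoint on periodic functions, so they do not generate any hidden $\eps$-loss; and the quadrature remainder obtained by Taylor-expanding $\theta\mapsto f_\tau(U(t_n+\theta,\cdot))$ around $\theta=h/2$ (for SE1-TSI) or along the secant $(1-\rho)U(t_n)+\rho U(t_{n+1})$ (for SE2-TSI) is controlled by $\|\partial_t^2 U\|$ together with the smoothness of $H_1$ on the trajectory, both being $O(1)$ in $\eps$ by Proposition \ref{prop1}. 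Handling the implicit half-step \eqref{S1 a} requires a preliminary Banach fixed-point argument showing $U^{n+\frac12}=U(t_n+h/2,\cdot)+O(h^3)$, valid for $h$ below an $\eps$-independent threshold.

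Next I would set up the error recursion for $e^{\,n}:=U^n-U(t_n,\cdot)$. Using the uniform Lipschitz continuity of $f_\tau$ on a bounded neighborhood of the exact solution (again via Proposition \ref{prop1} and smoothness of $H_1$) together with the uniform bound on $\varphi_1(-h\partial_\tau/\eps)$, the schemes yield a recursion of the form $\|e^{\,n+1}\|\le(1+Ch)\|e^{\,n}\|+\|d^{\,n}\|$. For SE2-TSI the integral representation in \eqref{S2} gives this bound directly; for SE1-TSI one first propagates the half-step error into \eqref{S1 b}. A discrete Gronwall inequality then produces $\|e^{\,n}\|\le Ch^2$ uniformly for $0\le t_n\le T_0$, and the conclusion follows by evaluation at $\tau=t_n/\eps$ and the isometry property above.

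The hardest part will be securing the uniform-in-$\eps$ bound $\|d^{\,n}\|=O(h^3)$ for SE1-TSI. A naive Taylor expansion around $t_n+h/2$ leaves a first-moment term of the form $\int_0^h\fe^{(\theta-h)\partial_\tau/\eps}(\theta-h/2)\,\phi'(h/2)\,d\theta$ that is \emph{not} algebraically zero because the exponential weight breaks the symmetry of the midpoint rule; one has to use the explicit identity $\fe^{-z}\varphi_1(z)=\varphi_1(-z)$ (already invoked for time symmetry) together with skew-adjointness of $\partial_\tau$ to show that this moment collapses to $O(h^3)$ uniformly in $\eps$. For SE2-TSI the symmetric integration interval $[0,1]$ in $\rho$ makes the analogous cancellation transparent, so SE2-TSI is the easier case and can serve as a template for the SE1-TSI argument.
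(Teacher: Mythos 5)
Your overall architecture---Duhamel-based defect, uniform Lipschitz bounds for $f_\tau$ supplied by Proposition \ref{prop1}, a $(1+Ch)$-type recursion with discrete Gronwall in $H^1_\tau$, and transfer to $u(t_n)$ by evaluating at $\tau=t_n/\eps$ using the isometry of the free flow---is exactly the paper's. But there are two problems at the consistency step, and the second is substantive. First, the implicit half-step \eqref{S1 a} is only first-order consistent: its defect $\xi_1^n=\int_0^{h/2}\fe^{(\theta-h/2)\partial_\tau/\eps}\bigl[f_\tau(U(t_n+\theta))-f_\tau(U(t_{n+\frac12}))\bigr]d\theta$ is $\mathcal{O}(h^2)$, not $\mathcal{O}(h^3)$ as you assert. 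The paper still obtains an $\mathcal{O}(h^3)$ local error for the full step because this half-step discrepancy enters \eqref{S1 b} only through $h\varphi_1(-h\partial_\tau/\eps)\bigl[f_\tau(U(t_{n+\frac12}))-f_\tau(U_*^{n+\frac12})\bigr]$, i.e.\ multiplied by an extra factor of $h$; this gain-of-$h$ mechanism is the actual reason the second-order data \eqref{2nd data} (bounding only $\partial_t^2U$) suffices, and it is missing from your write-up.

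Second, the step you yourself identify as hardest is not closed by the tools you invoke. The first-moment operator $A:=\int_0^h(\theta-\tfrac h2)\fe^{(\theta-h)\partial_\tau/\eps}\,d\theta$ acts on the $\ell$-th Fourier mode in $\tau$ as multiplication by $h^2\bigl(\varphi_2-\tfrac12\varphi_1\bigr)(-ih\ell/\eps)$, and $\varphi_2(i\theta)-\tfrac12\varphi_1(i\theta)$ neither vanishes nor is $\mathcal{O}(\theta)$ uniformly: it equals $2/\pi^2$ at $\theta=\pi$ and $i/(2\pi)$ at $\theta=2\pi$. Hence $\norm{A}$ is genuinely of order $h^2$ (attained at modes with $h\ell/\eps=\mathcal{O}(1)$), and neither the identity $\fe^{-z}\varphi_1(z)=\varphi_1(-z)$ nor skew-adjointness of $\partial_\tau$ collapses this moment to $\mathcal{O}(h^3)$ uniformly in $\eps$. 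For comparison, the paper's own proof does not confront this term at all: its displayed expression for $\xi_2^n$ consists solely of the second-order Taylor remainder. So to complete your argument you must either justify that representation or control the accumulated first-moment contribution by a summation-by-parts argument over $n$ (at which point near-resonances $h\ell/\eps\in2\pi\bZ$ become the obstacle, as they do in the paper's long-time Theorem \ref{Long-time thm}). As written, the key $\mathcal{O}(h^3)$ consistency bound for SE1-TSI remains unproven in your proposal.
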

\begin{proof}
Let us omit the $\tau$-variable for simplicity, e.g., $U(t)=U(t,\tau)$. Firstly for SE1-TSI,
its local truncation error by (\ref{Duhamel}) consists of the error from the right rectangle rule at the half-step:
\begin{align}
 \xi_1^n:=&U\left(t_{n+\frac{1}{2}}\right)-\fe^{-\frac{h\partial_\tau}{2\eps}}U(t_n)-\frac{1}{2}h\varphi_1 \left(-h\partial_\tau/(2\eps)\right)
f_\tau\left(U\left(t_{n+\frac{1}{2}}\right)\right)\label{xi_p def}\\
=&\int_0^{h/2}\fe^{(\theta-\frac{h}{2})\frac{\partial_\tau}{\eps}}
\int_{h/2}^\theta\nabla f_\tau(U(t_{n}+\rho))\partial_tU(t_n+\rho)d\rho d\theta,\nonumber
\end{align}
 and the error from the midpoint rule at the integer-step:
  \begin{align*}
    &\xi_2^n:=U(t_{n+1})-\fe^{-\frac{h\partial_\tau}{\eps}}U(t_n)-
    h\varphi_1 (-h\partial_\tau/\eps)f_\tau\left(U\left(t_{n+\frac{1}{2}}\right)\right)\\
    =&\int_0^{\frac{h}{2}}\fe^{(\theta-\frac{h}{2})\frac{\partial_\tau}{\eps}}
    \left(\theta-\frac{h}{2}\right)^2\int_0^1\rho\left[\nabla^2f_\tau(U(t))(\partial_tU(t))^2
    +\nabla f_\tau(U(t))\partial_t^2U(t)\right]_{t=t_n+\rho\theta+(1-\rho)\frac{h}{2}}d\rho d\theta.
  \end{align*}
  Note that $\fe^{t\partial_\tau}$ for $t\in\bR$ is isometric in $H^1_\tau$. So
by the assumptions and the result of Proposition \ref{prop1}, we have $\|\xi_1^n\|_{H^1_\tau}\leq Ch^2$ and $\|\xi_2^n\|_{H^1_\tau}\leq Ch^3$. Here and after, $C>0$ denotes some constant independent of $\eps$ but whose value may vary line by line.
Define $U_*^{n+\frac{1}{2}}$ satisfying
\begin{equation}\label{Ustar def}
U_*^{n+\frac{1}{2}}-\fe^{-\frac{h\partial_\tau}{2\eps}}U(t_n)-\frac{1}{2}h\varphi_1 \left(-h\partial_\tau/(2\eps)\right)f_\tau\left(U_*^{n+\frac{1}{2}}\right)=0,
\end{equation}
which implies  $\|U_*^{n+\frac12}\|_{H^1_\tau}\leq\|U(t_n)\|_{H^1_\tau}+1$ for all $n$ when $h$ is smaller than a constant independent of $\eps$.
Then by (\ref{xi_p def}) and noting $\|\varphi_1(t\partial_\tau)w\|_{H^1_\tau}\leq\|w\|_{H^1_\tau}$ for any $w(\tau)$, it can be seen that
$$\left\|U\left(t_{n+\frac12}\right)-U_*^{n+\frac{1}{2}}\right\|_{H^1_\tau}\leq C\|\xi_1^n\|_{H^1_\tau}.$$
The local truncation error of SE1-TSI in total reads
  \begin{align}
    \xi^n:=&U\left(t_{n+1}\right)-\fe^{-\frac{h\partial_\tau}{\eps}}U(t_n)-
    h\varphi_1 (-h\partial_\tau/\eps)f_\tau\left(U_*^{n+\frac{1}{2}}\right)\label{xi s1 def}\\
    =&\xi_2^n+h\varphi_1 (-h\partial_\tau/\eps)\left[f_\tau\left(U\left(t_{n+\frac{1}{2}}\right)\right)-
    f_\tau\left(U_*^{n+\frac{1}{2}}\right)\right],\nonumber
  \end{align}
  and so $\|\xi^n\|_{H^1_\tau}\leq C(\|\xi_2^n\|_{H^1_\tau}+h\|\xi_1^n\|_{H^1_\tau})\leq Ch^3.$
   With $e_n:=U(t_n)-U^n$, (\ref{xi s1 def}) and (\ref{S1 b}) give
  \begin{align*}
   e_{n+1}=\fe^{-\frac{h\partial_\tau}{\eps}}e_n+h\varphi_1 (-h\partial_\tau/\eps)\left[f_\tau\left(U_*^{n+\frac{1}{2}}\right)-
    f_\tau\left(U^{n+\frac{1}{2}}\right)\right]+\xi^n,
  \end{align*}
  and from (\ref{Ustar def}) and (\ref{S1 a}),
  $$U_*^{n+\frac{1}{2}}-
    U^{n+\frac{1}{2}}=\fe^{-\frac{h\partial_\tau}{2\eps}}e_n+\frac{h}{2}\varphi_1 (-h\partial_\tau/(2\eps))\left[f_\tau\left(U_*^{n+\frac{1}{2}}\right)-
    f_\tau\left(U^{n+\frac{1}{2}}\right)\right].$$
    Then by taking the $H^1_\tau$-norm for the above two and the induction for the boundedness of $U^n$ and $U^{n+\frac12}$ (or the Lady
Windermere's fan argument \cite{Norsett}), we get $\|e_n\|_{H^1_\tau}\leq Ch^2$ for $0\leq n\leq \frac{T_0}{h}$ which implies
$\|e_n\|_{L^\infty_\tau}\leq Ch^2$. By the fact $U(t,t)=u(t)$, we conclude
$\|U^n(t_n/\eps)-u(t_n)\|_X\leq Ch^2$.

Next, we consider SE2-TSI (\ref{S2}). By the Taylor expansion, we find
\begin{align}\label{delta def}
 \delta^n=U(t_n+h\rho)-(1-\rho)U(t_n)-\rho U(t_{n+1})=h^2\rho(\rho-1)\int_0^1(1-s)\partial_t^2 U(t_n+hs)ds,
\end{align}
and so $\|\delta^n\|_{H^1_\tau}\leq C h^2$. By (\ref{Duhamel}), we note
$U(t_{n+1})=\fe^{-\frac{h\partial_\tau}{\eps}}U(t_n)+h\int_0^1\fe^{h(\rho-1)\frac{\partial_\tau}{\eps}}
f_\tau(U(t_n+h\rho))d\rho$, {and then} the local truncation error of SE2-TSI (\ref{S2}) reads
\begin{align}
\zeta^n:=&U(t_{n+1})-\fe^{-\frac{h\partial_\tau}{\eps}}U(t_n)-h
\varphi_1 (-h\partial_\tau/\eps)\int_{0}^1
f_\tau\big((1-\rho) U(t_{n})+\rho U(t_{n+1})\big) d\rho\label{zeta def}\\
=&h\int_0^1\fe^{h(\rho-1)\frac{\partial_\tau}{\eps}}\left[
f_\tau(U(t_n+h\rho))-
f_\tau\big((1-\rho) U(t_{n})+\rho U(t_{n+1})\big)\right] d\rho.\nonumber
\end{align}
Then (\ref{delta def}) tells
$\|\zeta^n\|_{H^1_\tau}\leq C h^3$. The difference between (\ref{zeta def}) and (\ref{S2}) gives the error equation:
\begin{align*}
 e_{n+1}=&\fe^{-\frac{h\partial_\tau}{\eps}}e_n+h
\varphi_1 (-h\partial_\tau/\eps)\int_{0}^1
\left[f_\tau\left((1-\rho) U(t_{n})+\rho U(t_{n+1})\right)-
f_\tau\left((1-\rho) U^{n}+\rho U^{n+1}\right)\right] d\rho\\
&+\zeta^n,
\end{align*}
 and the rest proceeds similarly as before.
\end{proof}

We end this section by remarking that the two-scale equation \eqref{2scale eq} may possess some
sympletic structure. Finding such intrinsic structure
and the corresponding structure-preserving numerical schemes would be of independent interests to us.
From the practical point of view, our proposed symmetric methods (\ref{S1}) and (\ref{S2}) already can achieve the good long-term performance for approximating (\ref{model}), which will be illustrated in the coming section by some numerical examples.


\section{Applications and numerical results}\label{sec:4num}
In this section, we apply the proposed two SE-TSIs to solve some precise examples in the form of (\ref{model}). We shall test the accuracy and the long-term performance of the methods on each example. All the methods are programmed here directly without involving the restart strategy from \cite{vp3D,UAVP4d}.

\subsection{H\'{e}non-Heiles model}
Our first numerical example is devoted to the
  H\'{e}non-Heiles model, which is a classical Hamiltonian system from astronomy  \cite{Henon,Lubich}. We take the form as in \cite{NUA,autoUA}:
  \begin{equation}\label{HH model}
\frac{\textmd{d}}{\textmd{d} t}\begin{pmatrix}
                                   q_1 \\
                                   q_2 \\
                                   p_1 \\
                                   p_2
                                 \end{pmatrix}
= \frac{1}{\eps}J{\left(
     \begin{array}{cccc}
        1& 0 & 0 & 0 \\
       0 &0 & 0 & 0 \\
       0 & 0 & 1 & 0 \\
       0 & 0 &0 & 0 \\
     \end{array}
   \right)}\begin{pmatrix}
                                   q_1 \\
                                   q_2 \\
                                   p_1 \\
                                   p_2
                                 \end{pmatrix}+J\nabla H_1(q_1,q_2,p_1,p_2),\quad t>0,
\end{equation}
 with $J=\binom{\ \ 0 \quad\ I_{2\times2}}{-I_{2\times2} \ \ \, 0}
 $
 and {$H_1=q_2^2/2+p_2^2/2+q_1^2q_2-q_2^3/3.$}
With {$u=(q_1,q_2,p_1,p_2)^\intercal$}, (\ref{HH model}) is exactly in the form of (\ref{model}) and meets the assumptions in Section \ref{sec2.1}.
We choose the initial value {$u_0=(0.12,0.12,0.12,0.12)^\intercal$} and apply the presented numerical schemes SE1-TSI (\ref{S1}) and SE2-TSI (\ref{S2}) to solve (\ref{HH model}). The reference solution is obtained by  the `ode45' of MATLAB. For the discretization in the $\tau$-direction of all the  methods,
we fix $N_\tau=2^6$ so that this part of error is rather negligible. The two proposed SE-TSIs are implicit, and so a fixed point iteration is applied with the error tolerance and the maximum number of each iteration set as  $10^{-10}$ and $200$, respectively.

\begin{figure}[t!]
$$\begin{array}{cc}
\psfig{figure=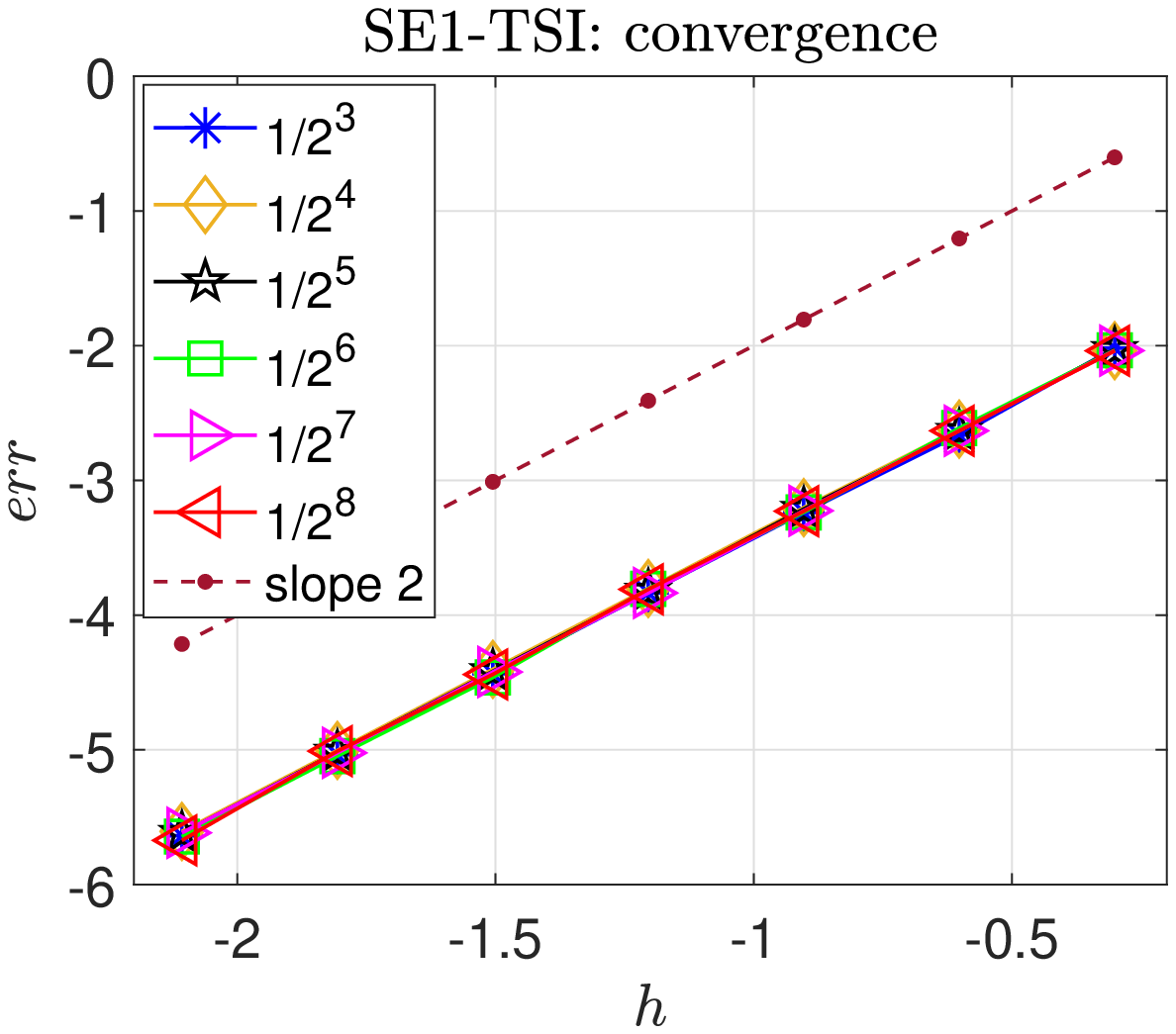,height=4.5cm,width=6.7cm}
\psfig{figure=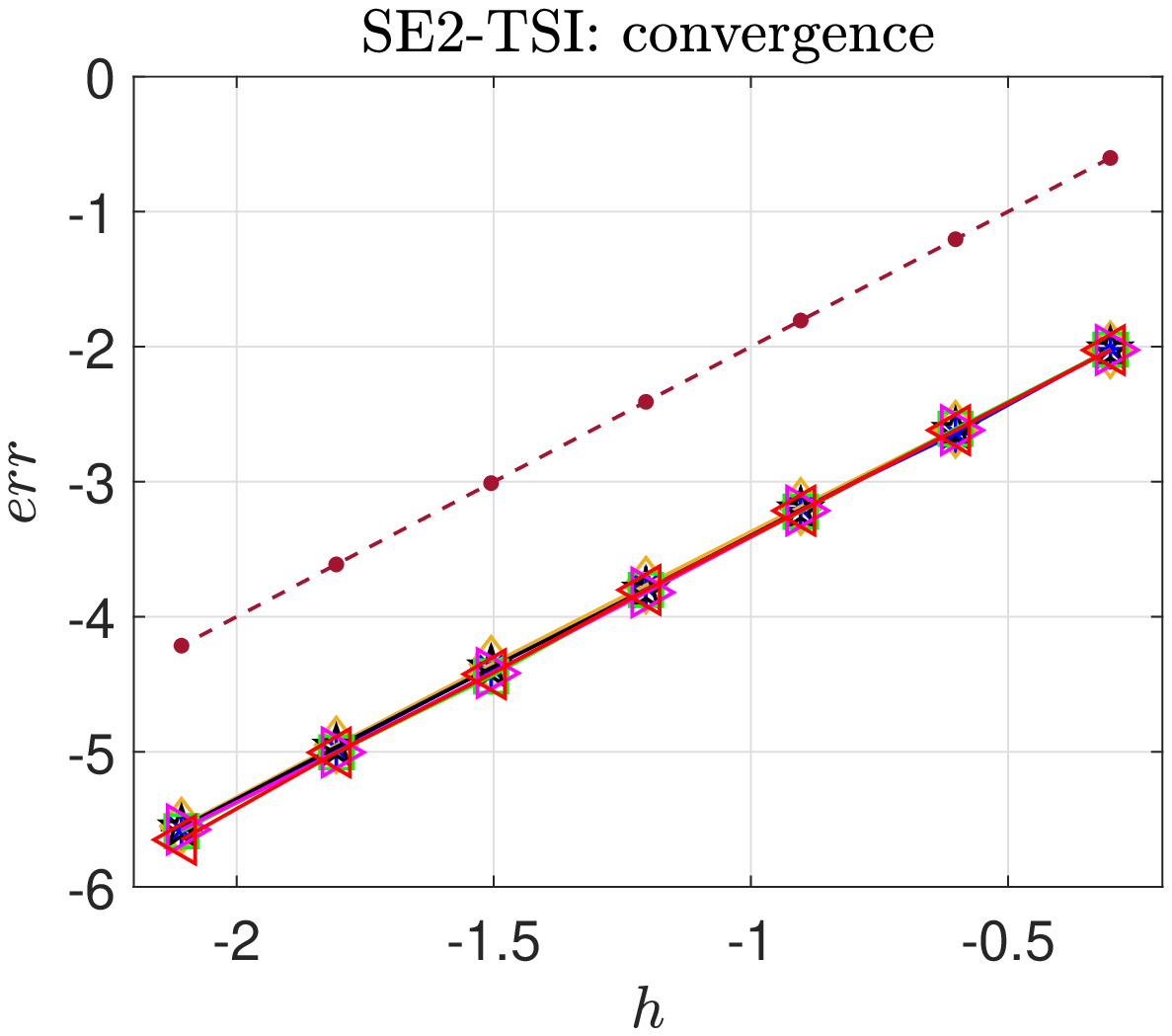,height=4.5cm,width=6.7cm}
\end{array}$$
\caption{H\'{e}non-Heiles example (\ref{HH model}): the log-log plot of the temporal error $err=\abs{u^n-u(t_n)}/\abs{u(t_n)}$ of SE-TSIs at $t_n=1$ under different $\eps$.}\label{fig10}
\end{figure}
   \begin{figure}[t!]
$$\begin{array}{cc}
\psfig{figure=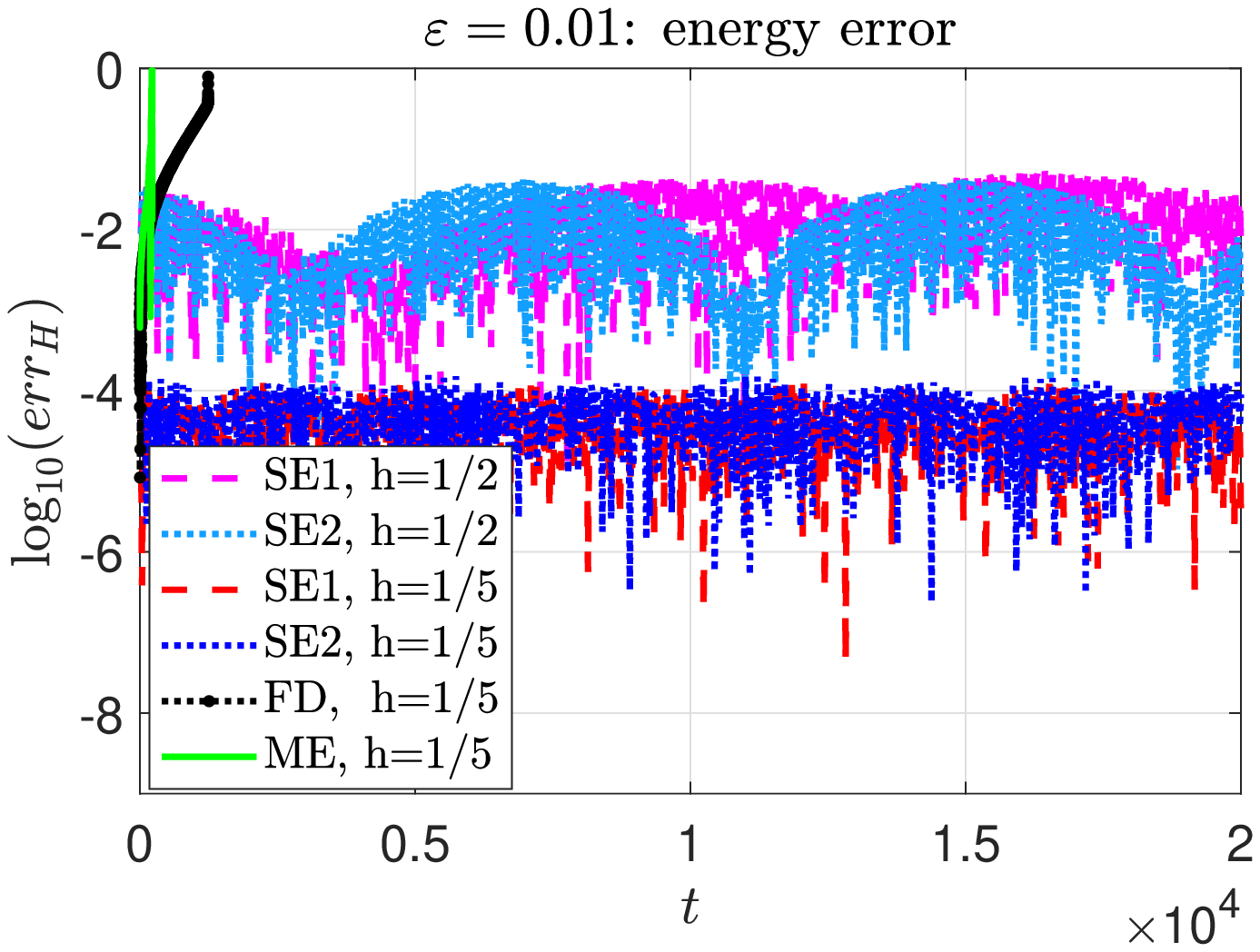,height=3.5cm,width=7.5cm}
\psfig{figure=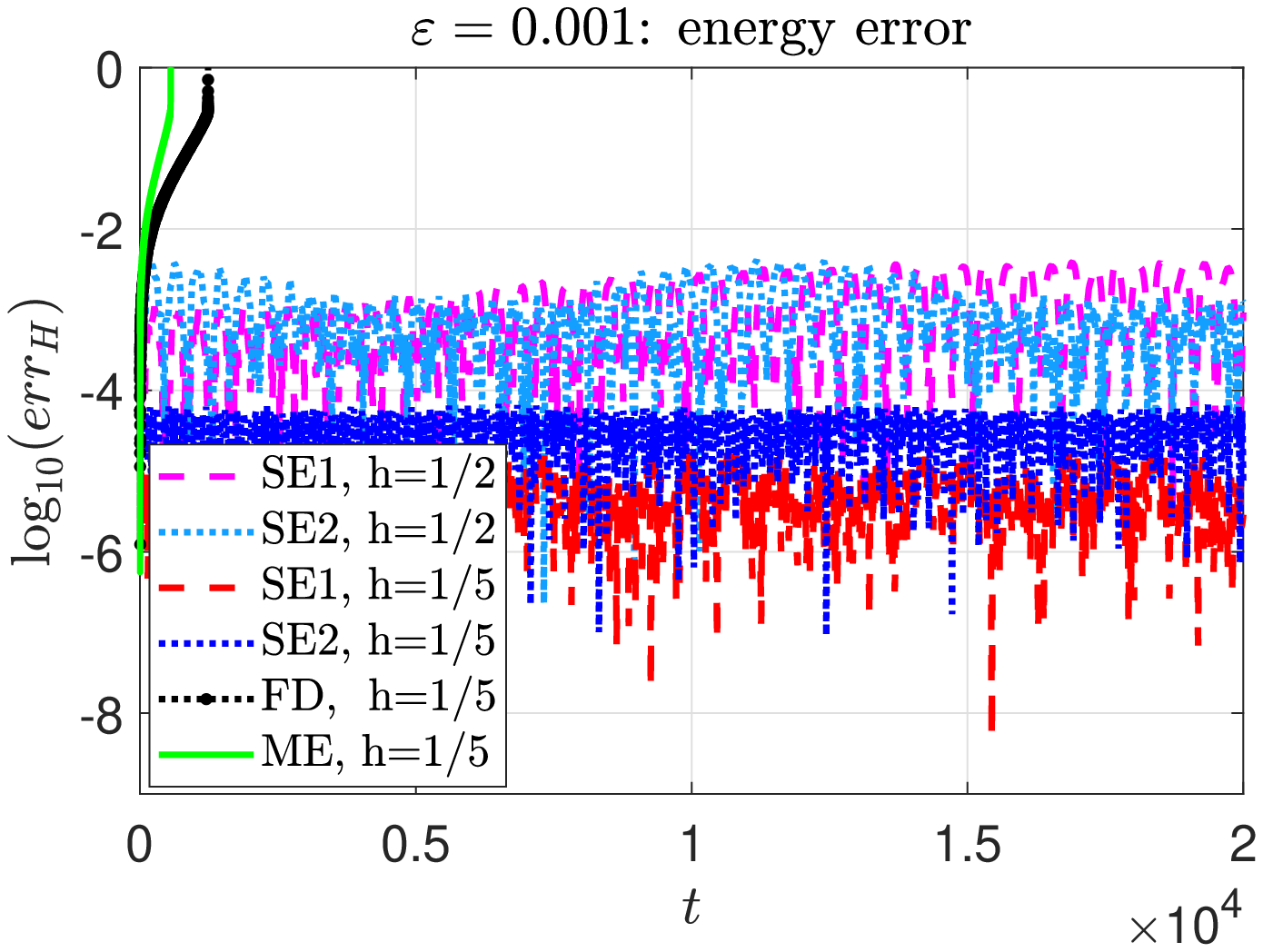,height=3.5cm,width=7.5cm}\\
\psfig{figure=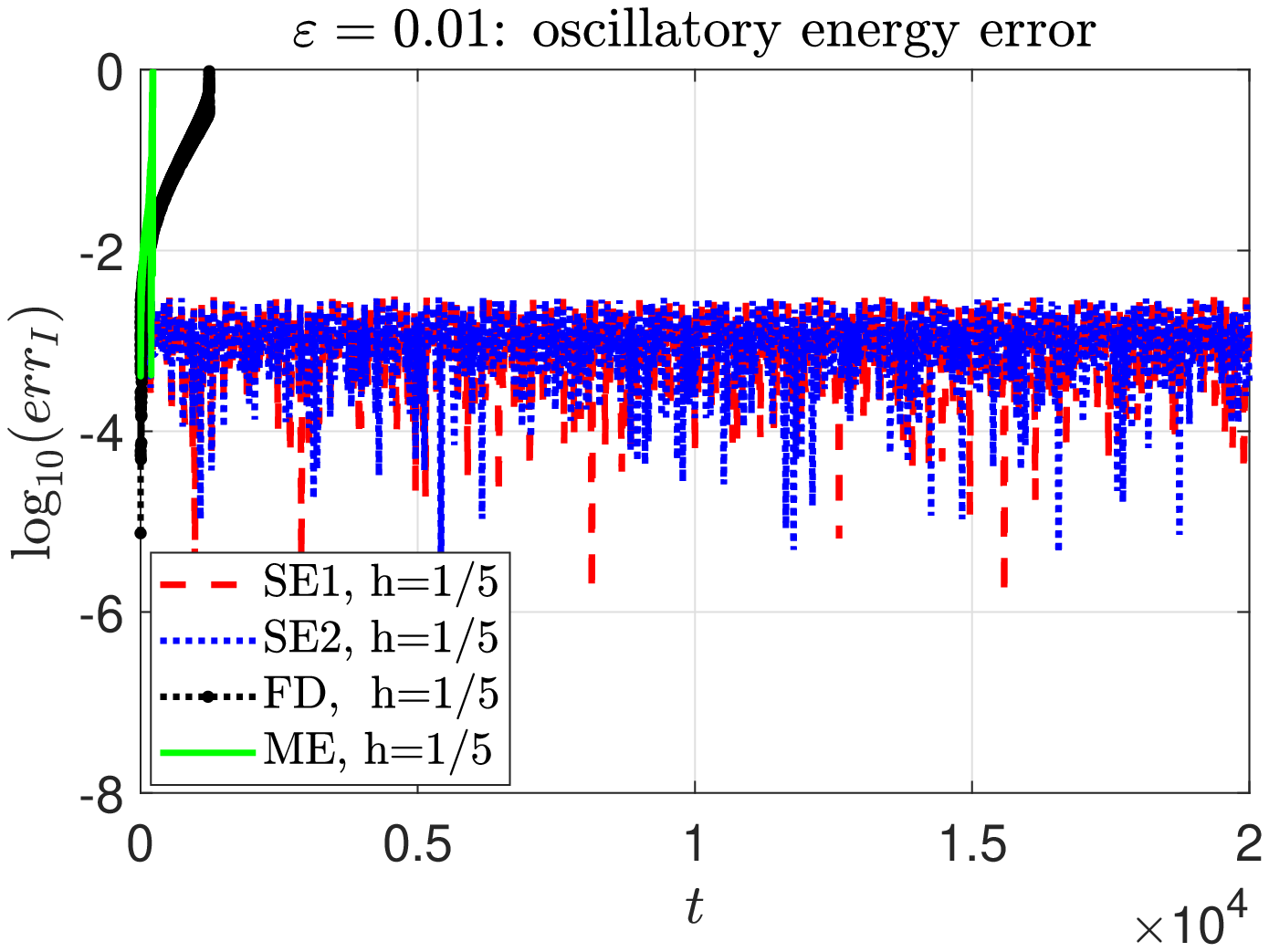,height=3.5cm,width=7.5cm}
\psfig{figure=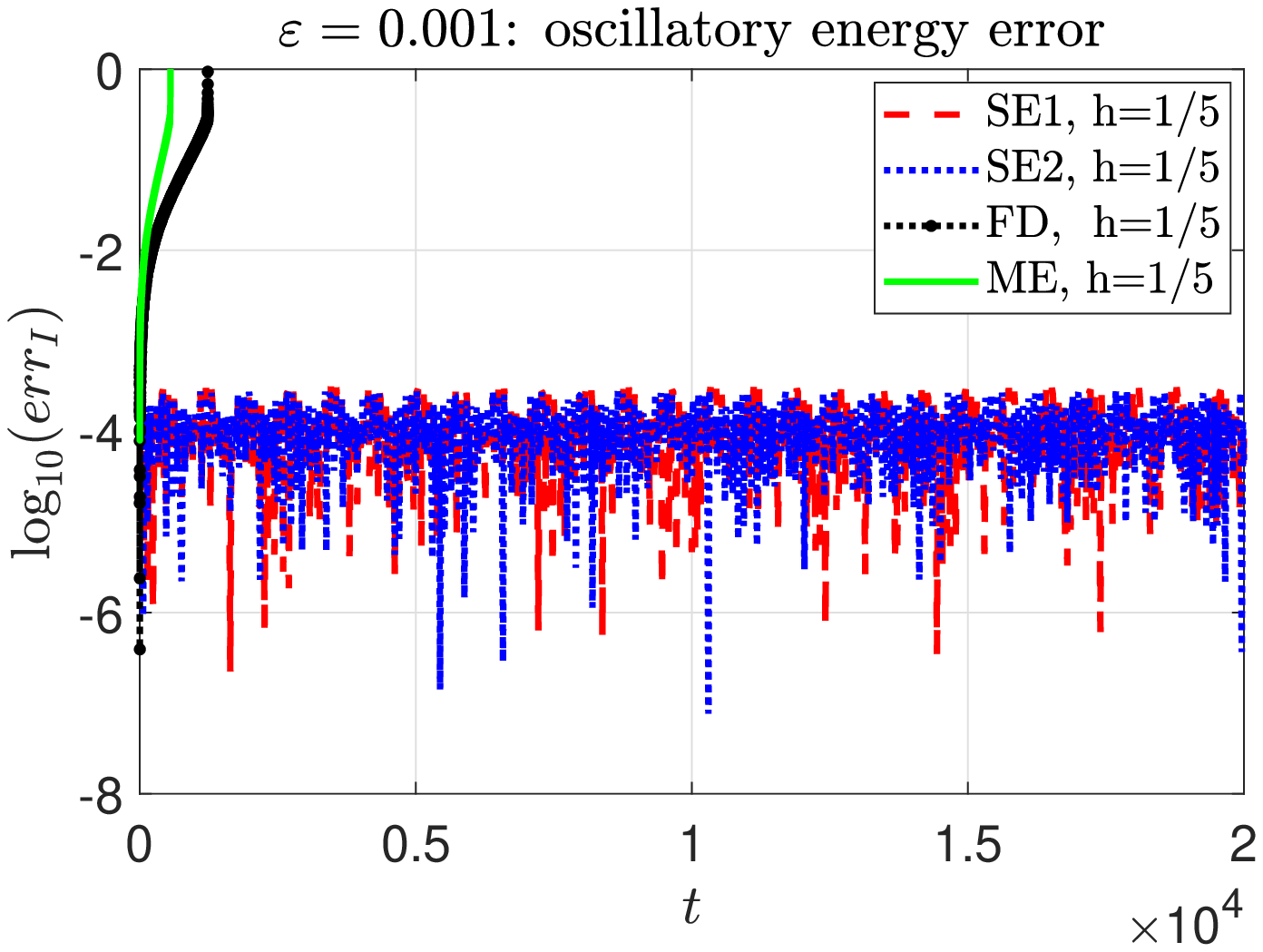,height=3.5cm,width=7.5cm}\\
\psfig{figure=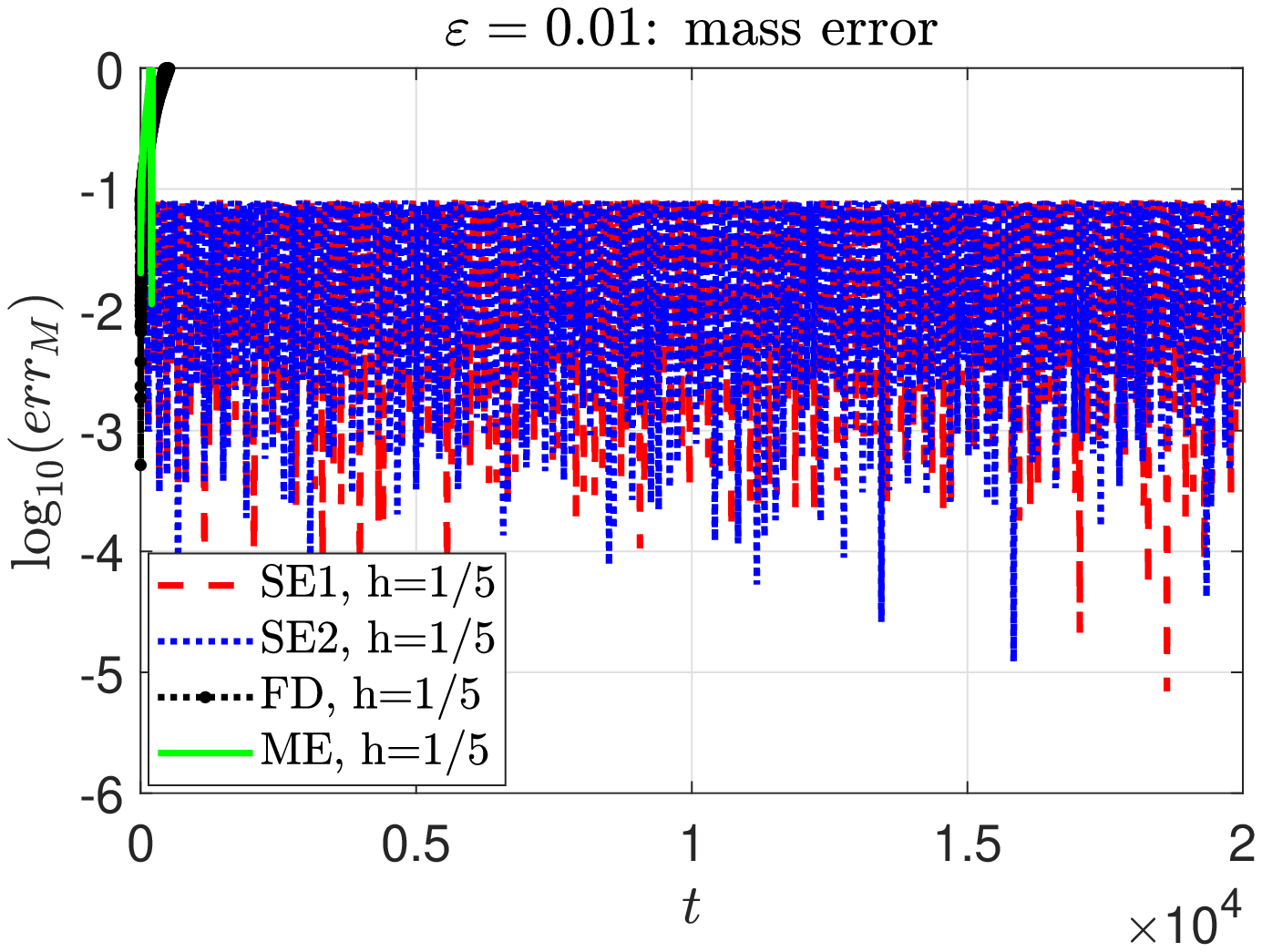,height=3.5cm,width=7.5cm}
\psfig{figure=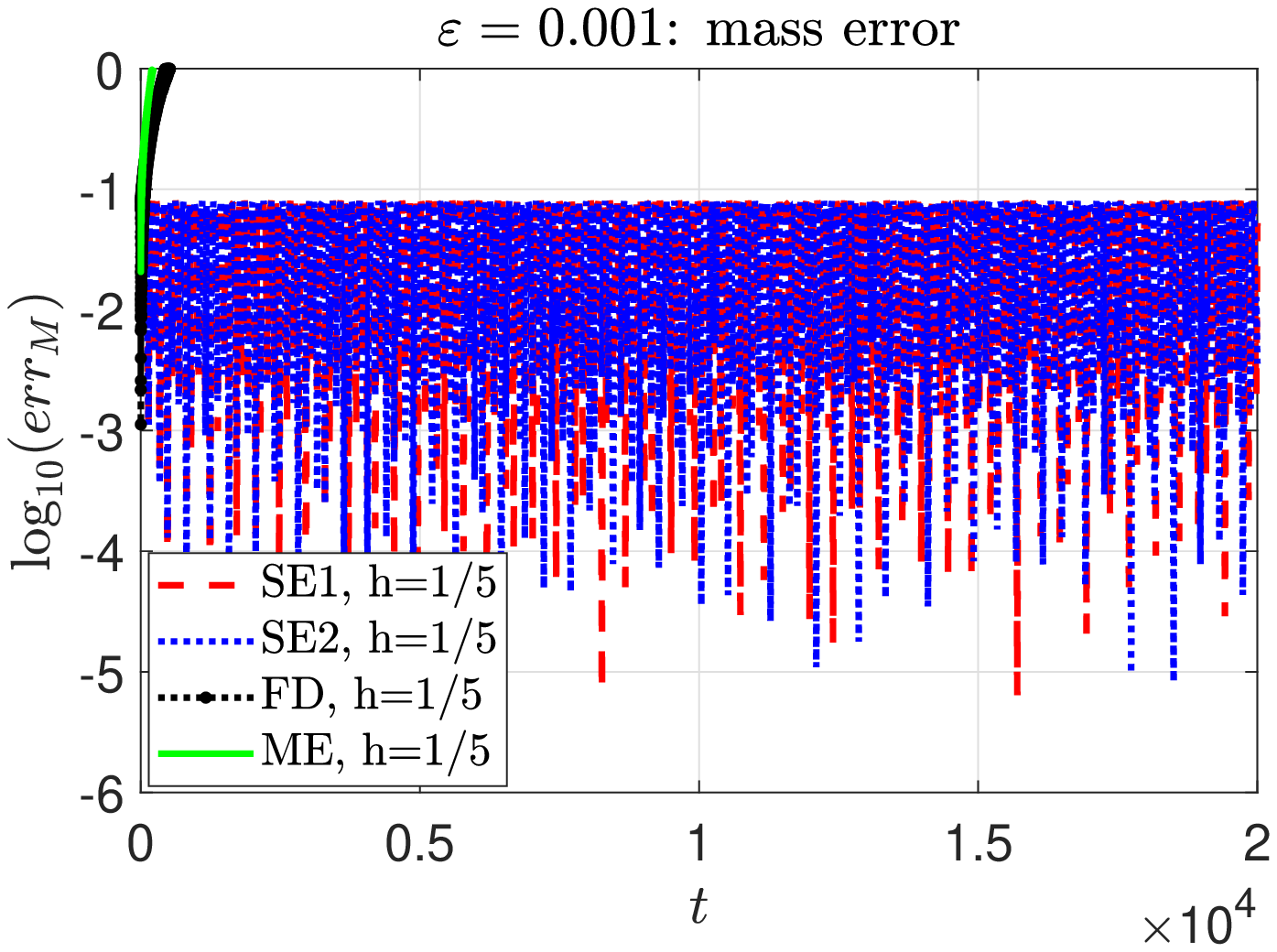,height=3.5cm,width=7.5cm}\\
\end{array}$$
\caption{H\'{e}non-Heiles example (\ref{HH model}): numerical conservations of the schemes  under different $\eps$.}\label{fig11}
\end{figure}

We first test the accuracy of our SE-TSIs by displaying the global errors $err=\abs{u^n-u(t_n)}/\abs{u(t_n)}$ at $t_n=t=1$ in Figure \ref{fig10} for different values of $\eps$. Clearly, the two SE-TSIs are both uniformly second order accurate for all $\eps\in(0,1]$,  which verifies the convergence result in Proposition \ref{prop2}.
Then, we study the long-term performance of the methods by investigating their numerical energy $H(u^n)$ as defined in (\ref{energy}) on a large time interval. In addition to the exact invariant $H(u)$, motivated by \cite{Hairer,Lubich}, we also test the numerical conservations of the oscillatory energy $I(u)$ and the mass $m(u)$:
\begin{equation}\label{mass}
I(u):={\frac{1}{2\eps}}\langle Mu,u\rangle,\quad m(u) :=\langle u,u\rangle,
\end{equation}
 which could be the almost-invariants of the system (\ref{model}).
The conservation  errors
$$err_H=\frac{|H(u^n)-H(u^0)|}{|H(u^0)|},\quad err_I=\frac{|I(u^n)-I(u^0)|}{|I(u^0)|},\quad err_M=\frac{|m(u^n)-m(u^0)|}{|m(u^0)|}$$
of the two SE-TSIs are shown in Figure \ref{fig11} for two different $\eps$. For the exact invariant $H$, we test two time steps: $h=1/2$ and $h=1/5$.
For comparisons, the results of the FD-TSF (\ref{FD scheme}) and the ME-TSF (\ref{EI scheme}) are also shown under $h=1/5$.
According to the numerical results in Figure \ref{fig11}, we have the following observations.

a) The energy $H$, oscillatory energy $I$ and mass $m$ are nearly preserved numerically by both SE-TSIs over long times even for large time step $h$. With smaller time step $h$, the numerical error in the energy can be improved uniformly in $\eps$ (the 1st row of Figure \ref{fig11}).
In contrast, FD-TSI and ME-TSI show substantial drifts in all the quantities in a short time.

b) The oscillatory energy $I$ and mass $m$ are almost invariants of the model. Their conservation errors would depend on both $h$ and $\eps$. It can be seen that as $\eps$ decreases (the 2nd row of Figure \ref{fig11}), the errors of SE-TSF in the oscillatory energy decreases at rate $\mathcal{O}(\eps)$.



   \begin{figure}[t!]
$$\begin{array}{cc}
\psfig{figure=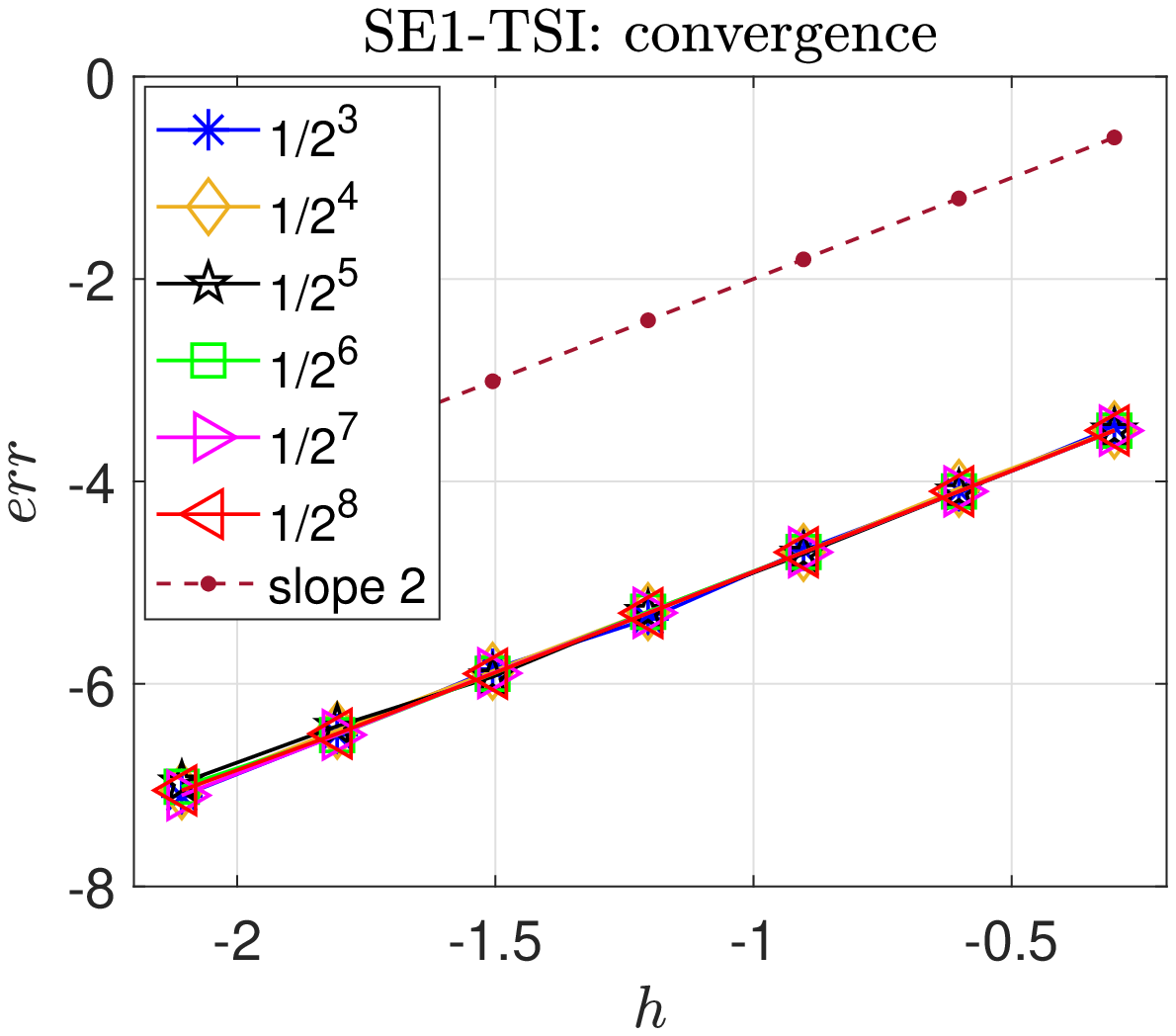,height=4.5cm,width=6.7cm}
\psfig{figure=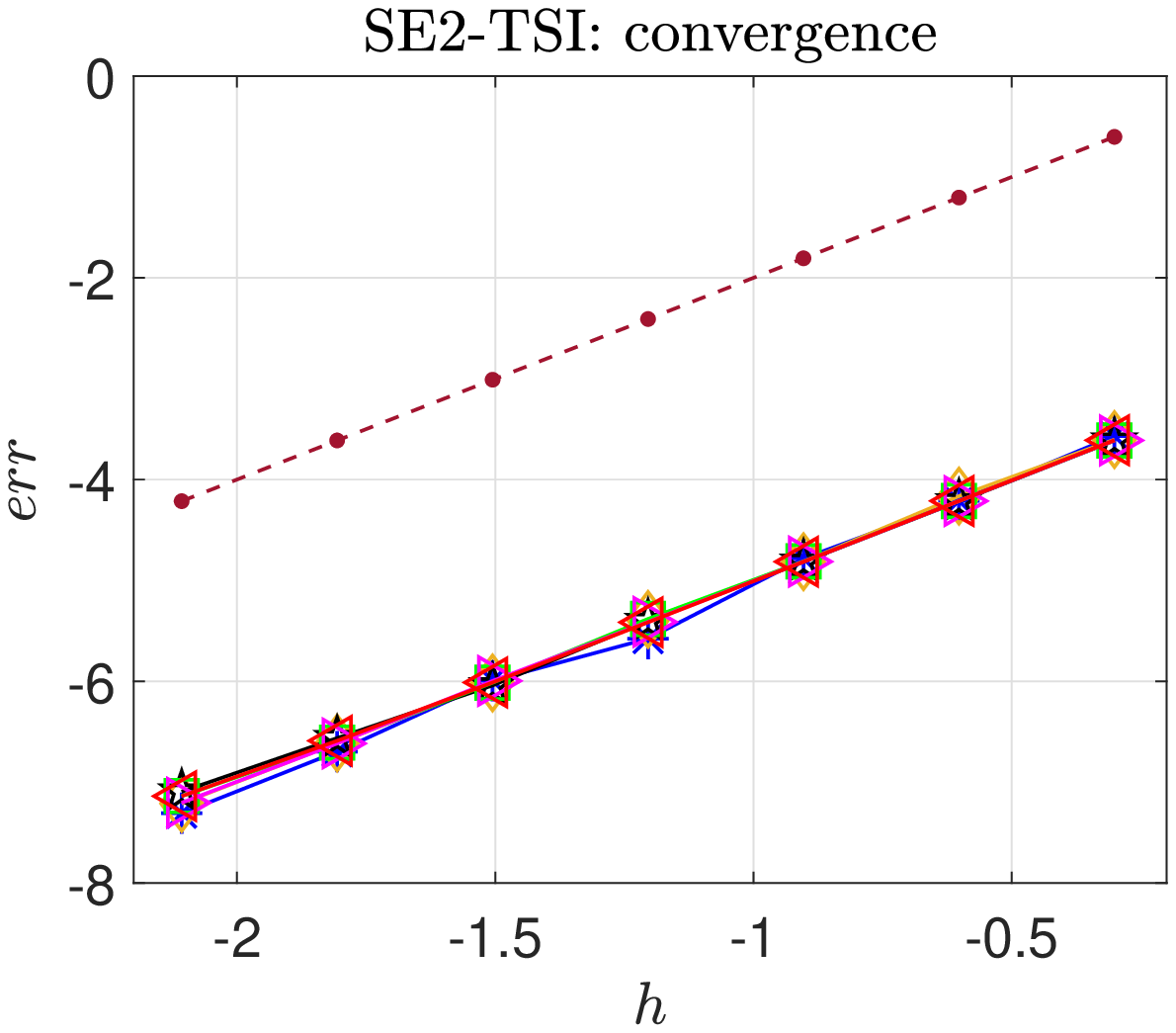,height=4.5cm,width=6.7cm}
\end{array}$$
\caption{NLS example (\ref{NLS eq}): the log-log plot of the temporal error $err=\norm{u^n-u(t_n)}_{L^2}/\norm{u(t_n)}_{L^2}$ of SE-TSIs at $t_n=1$ under different $\eps$.}\label{fig20}
\end{figure}
\begin{figure}[t!]
$$\begin{array}{cc}
\psfig{figure=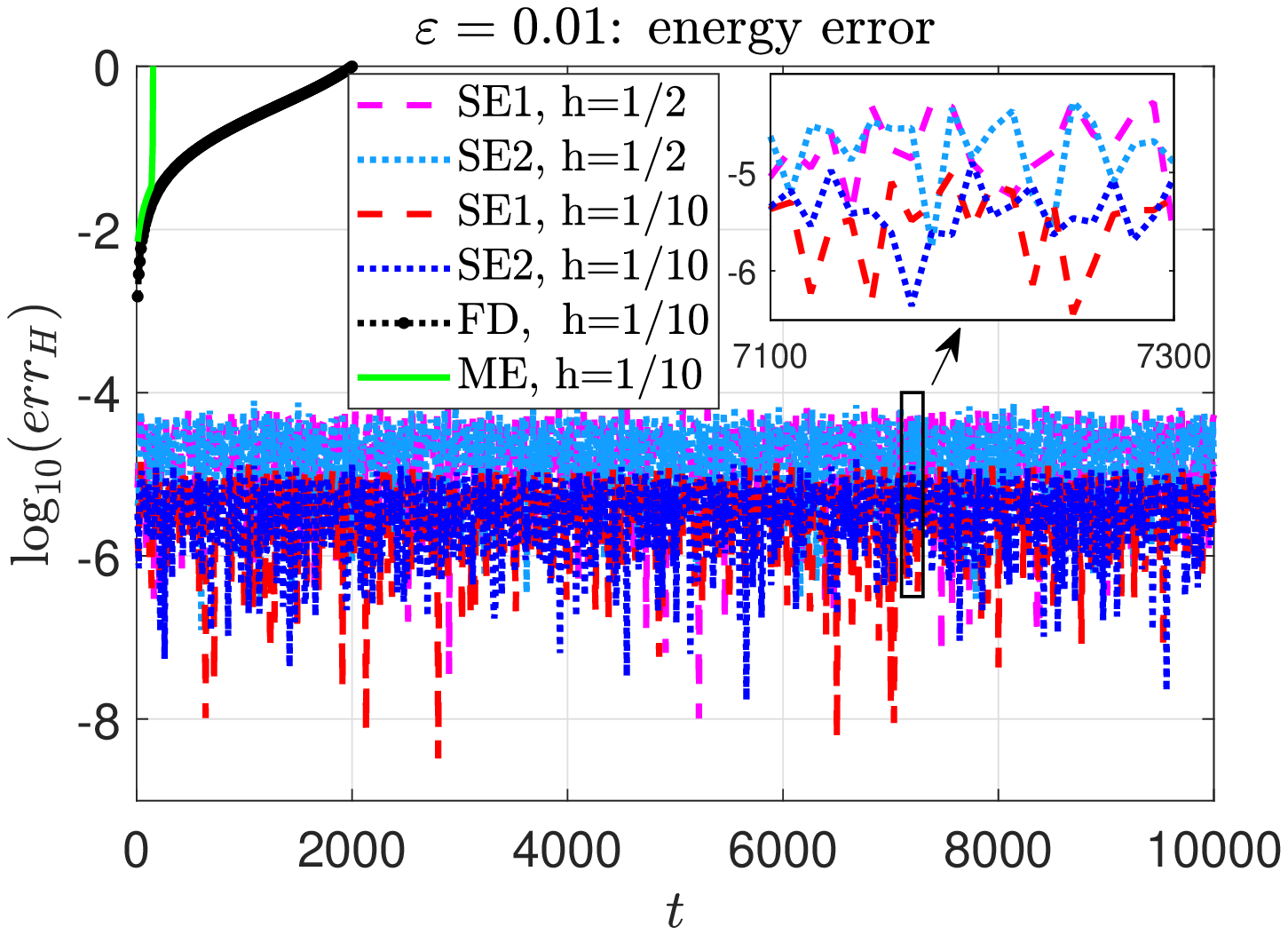,height=3.5cm,width=7.5cm}
\psfig{figure=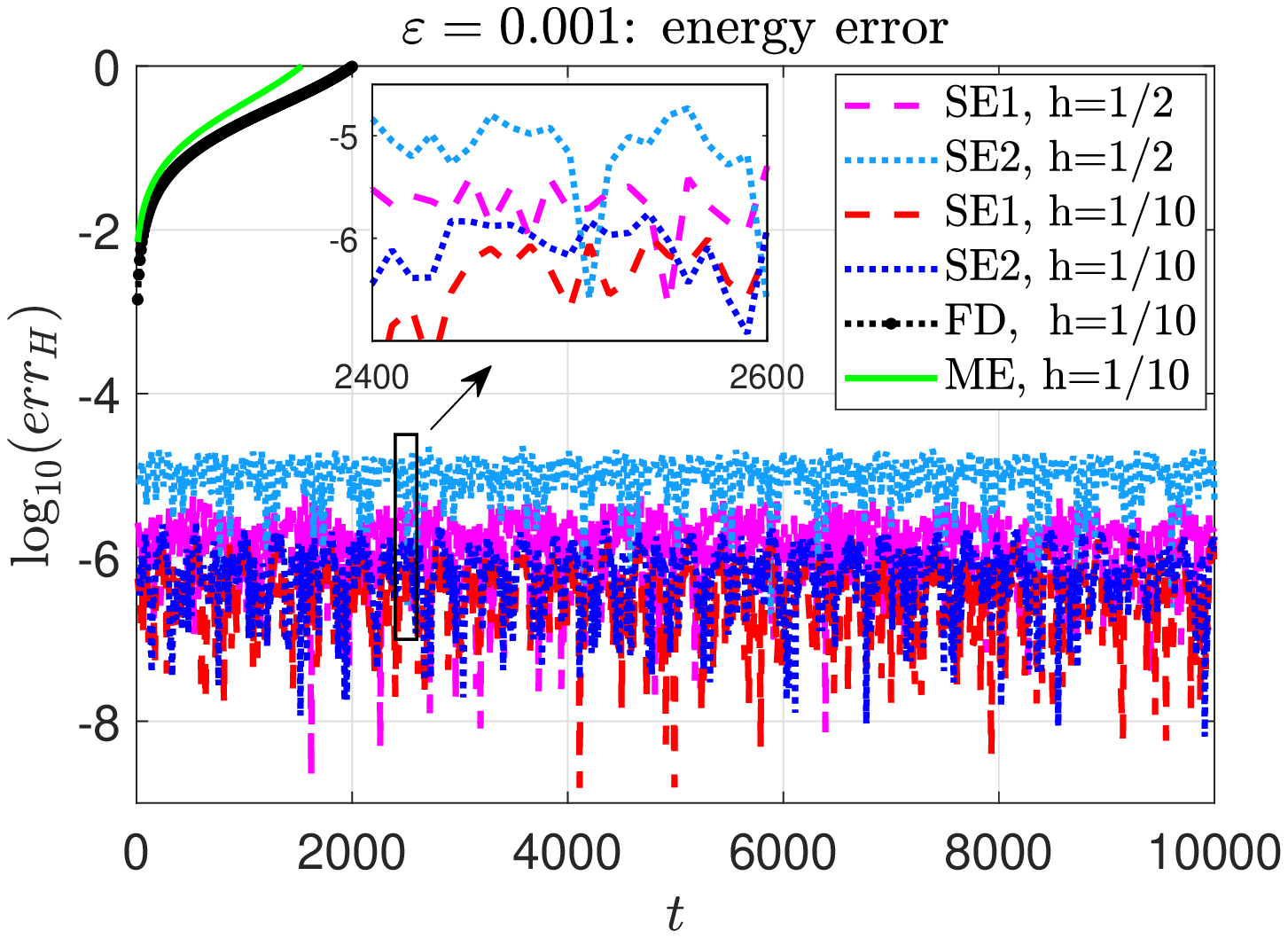,height=3.5cm,width=7.5cm}\\
\psfig{figure=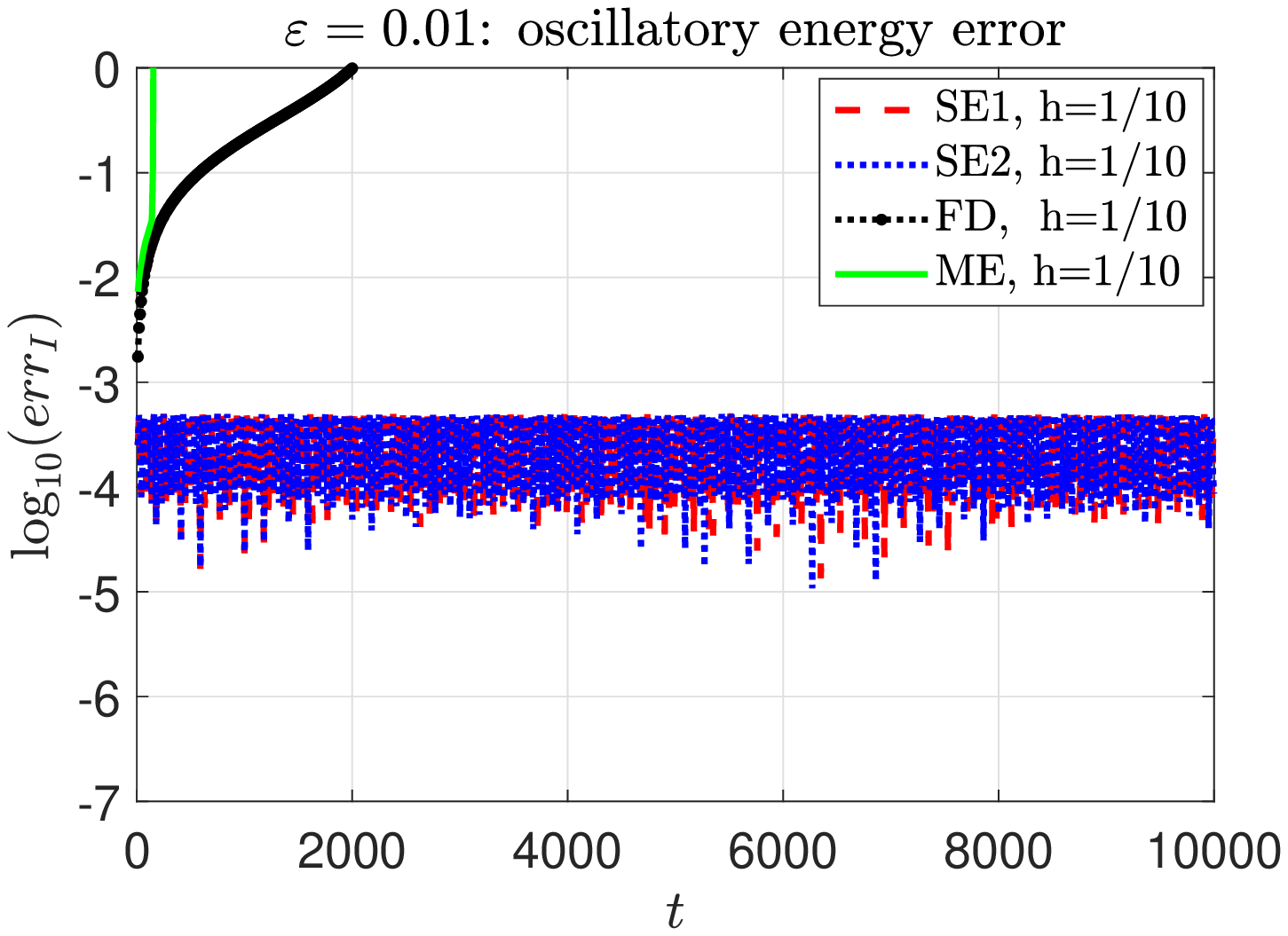,height=3.5cm,width=7.5cm}
\psfig{figure=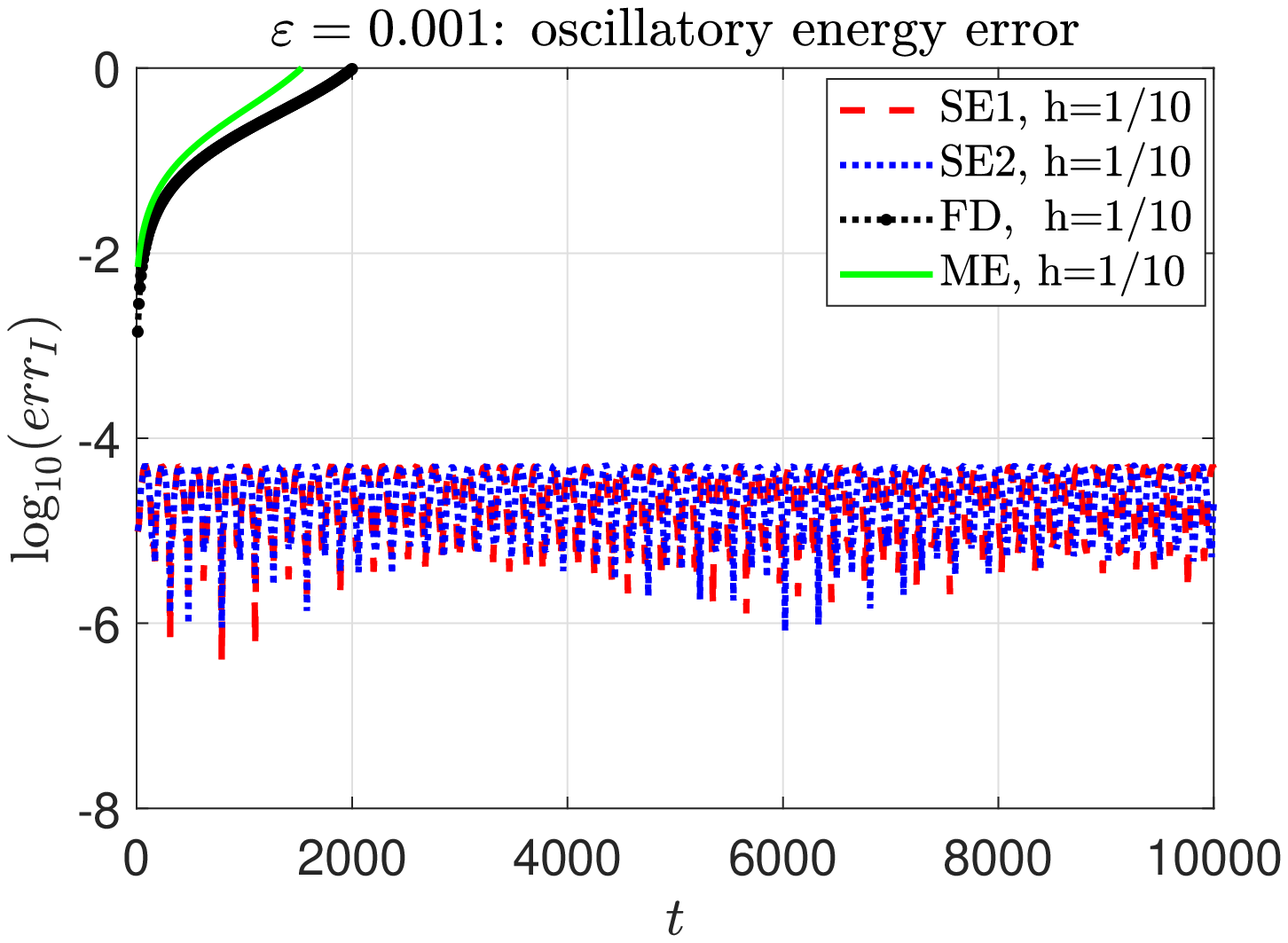,height=3.5cm,width=7.5cm}\\
\psfig{figure=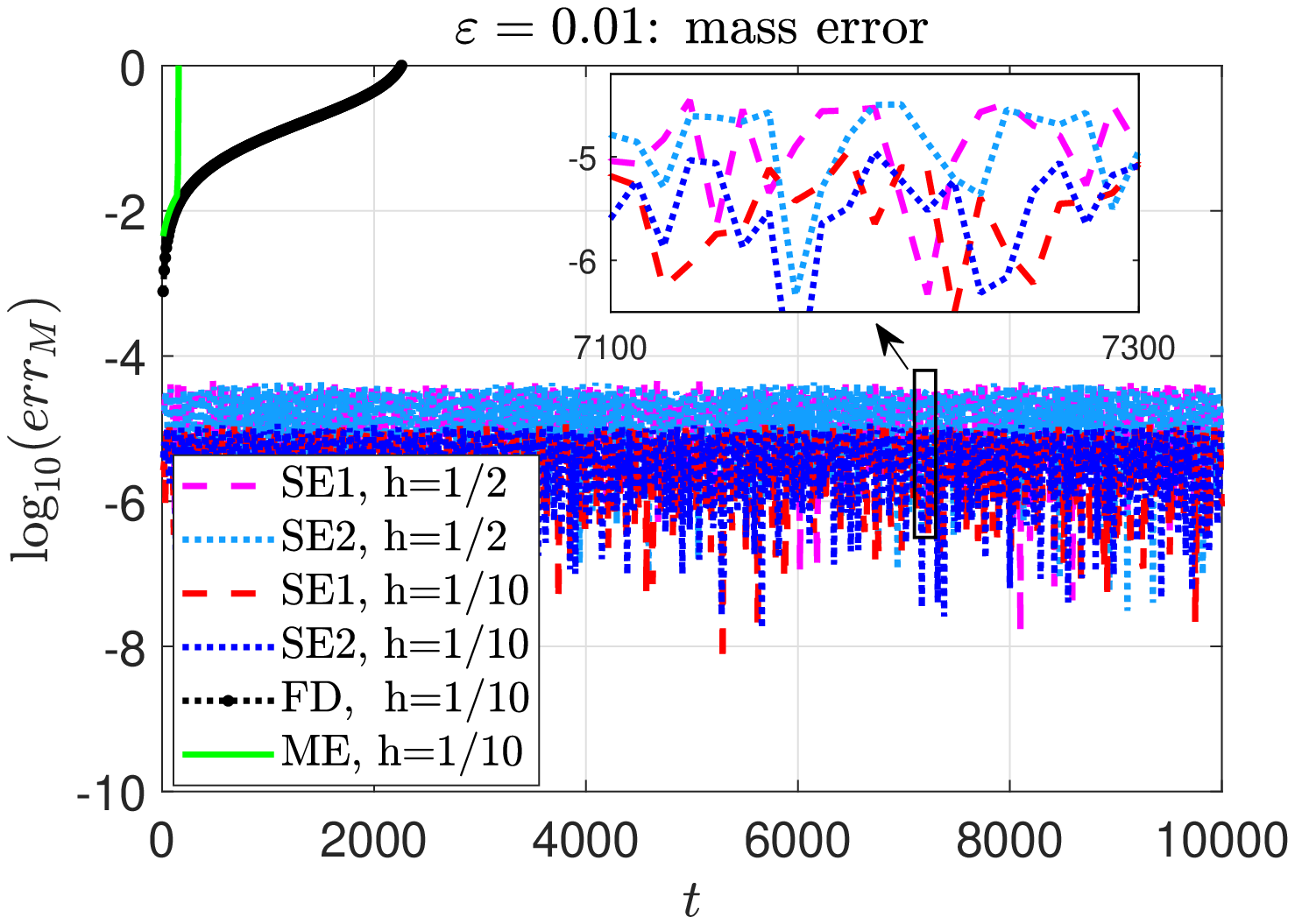,height=3.5cm,width=7.5cm}
\psfig{figure=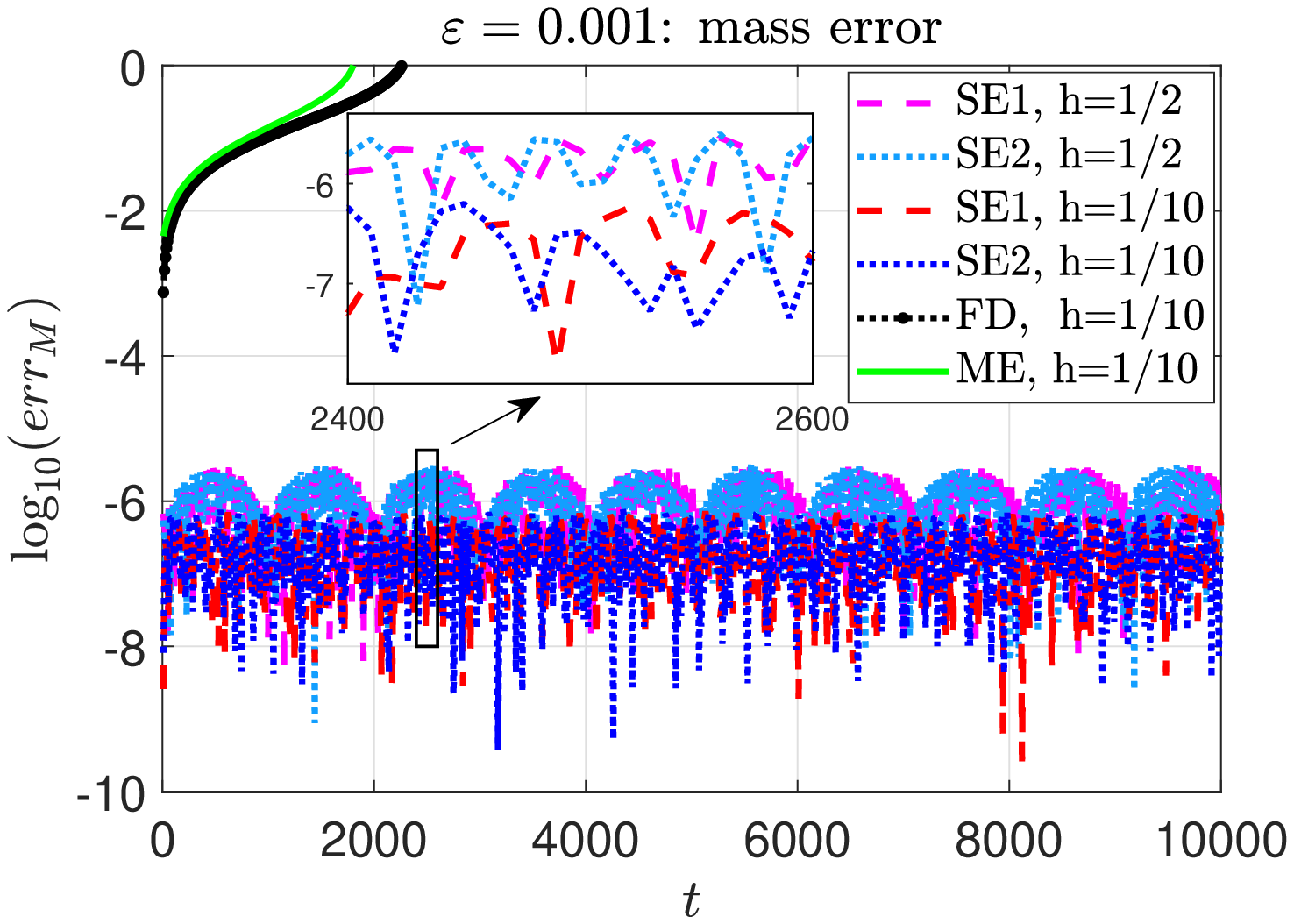,height=3.5cm,width=7.5cm}\\
\end{array}$$
\caption{NLS example (\ref{NLS eq}): numerical conservations of the schemes  under different $\eps$.}\label{fig21}
\end{figure}
  \subsection{Nonlinear Schr\"odinger equation}
Next, we consider the following highly oscillatory cubic nonlinear Schr\"odinger (NLS) equation on the torus \cite{UAKG,SAV}:
  \begin{align}\label{NLS eq}
   i\partial_t u(t,x)=-\frac{1}{\eps}\partial_x^2 u(t,x)+|u(t,x)|^2u(t,x),\quad x\in\bT,\ t>0.
  \end{align}
The model is globally well-posed in $L^2(\bT)$ \cite{Bourgain}, and
it comes by rescaling the long-term dynamics of a perturbed Schr\"odinger equation \cite{Faou,Kuksin} from a time interval of $\mathcal{O}(1/\eps)$ to $\mathcal{O}(1)$. As a PDE, (\ref{NLS eq}) fits into our model problem (\ref{model}) and all the assumptions. The operator $\fe^{i\partial_x^2\tau}$ is periodic in $\tau$ owning to the periodic setup in $x$.
In fact, by a change of variable
$v=\fe^{-\frac{it}{\eps}\partial_x^2}u,$
 (\ref{NLS eq}) reads:
$\partial_t v=-i\fe^{-it\partial_x^2/\eps}[|\fe^{it\partial_x^2/\eps}v|^2
\fe^{it\partial_x^2/\eps}v].$
Then by the two-scale technique, we consider for the augmented solution $U=U(t,\tau,x)$ satisfying
  \begin{align}
&\partial_t U+\frac{1}{\eps}\partial_\tau U=-i\fe^{-i\tau\partial_x^2}\left[\left|\fe^{i\tau\partial_x^2}U\right|^2
\fe^{i\tau\partial_x^2}U\right],\quad x\in\bT,\ \tau\in\bT,\ t>0.\label{NLS eq 2scale}
\end{align}

Now we apply the two-scale methods to solve (\ref{NLS eq 2scale}) with Fourier pseudospectral discretizations in both $x$ and $\tau$.
We respectively choose $N_{x}=2^4$ and $N_\tau=2^8$ to discretize the $x$ and $\tau$ directions which is enough to neglect their discretization errors.  The initial value of (\ref{NLS eq}) is chosen as {$u_0(x)=\frac{\cos(x)+i \sin(x)}{1+\sin^2(x)}$}, and the reference solution is obtained by the Strang splitting scheme \cite{Faou}. The solution errors ($L^2$-norm) and  numerical conservations in $H,\,I$ and $m$ (\ref{mass}) are presented in Figures \ref{fig20}\&\ref{fig21},  respectively. Since the mass $m$ is a well-known exact invariant in the NLS model, so here additionally we test the two time steps in the mass conservation errors. The results (the 3rd row of Figure \ref{fig21}) clearly show the improvement of the errors as $h$ decreases, and the other numerical phenomena are similar as before.

\begin{figure}[t!]
$$\begin{array}{cc}
\psfig{figure=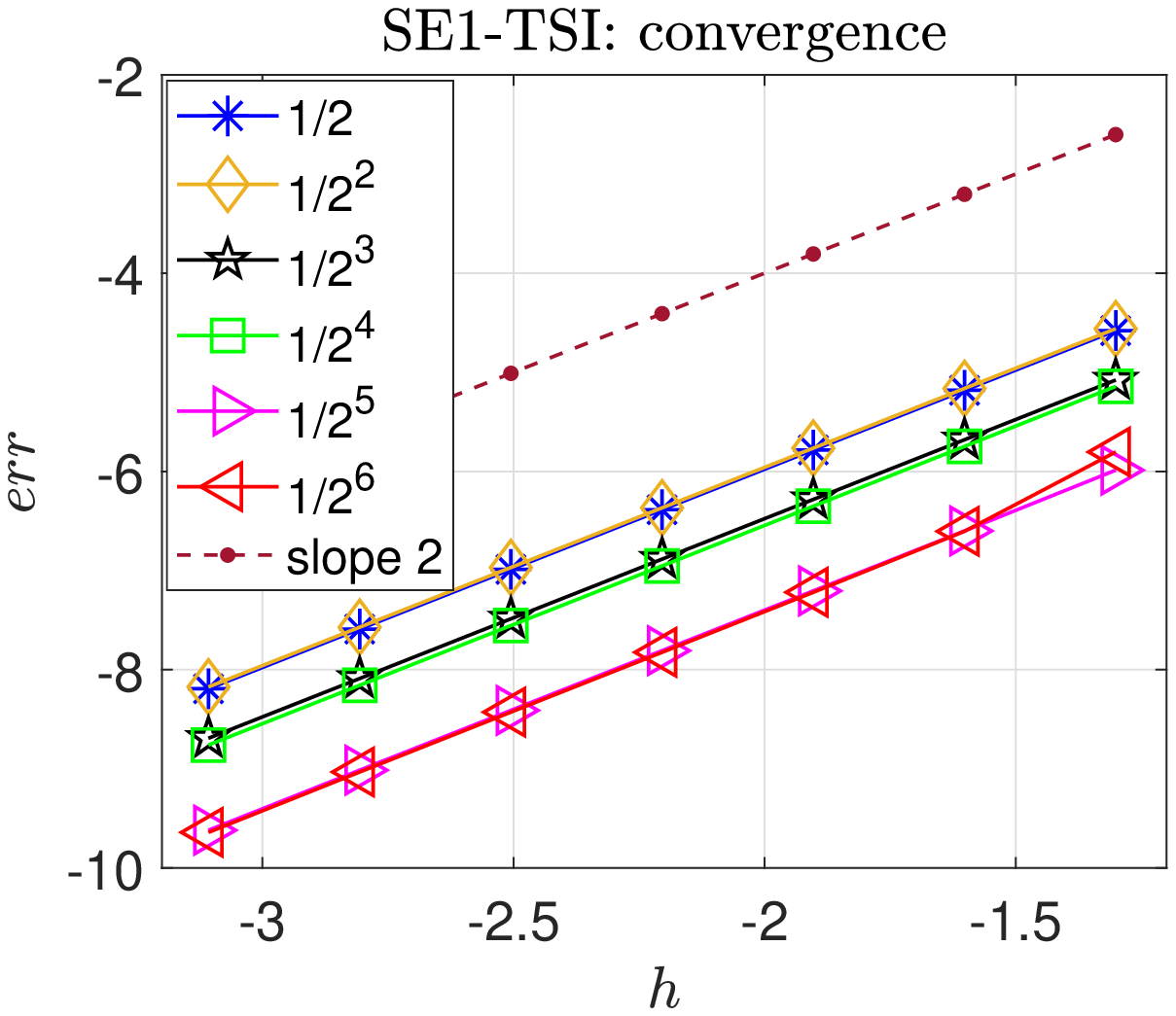,height=4.5cm,width=6.7cm}
\psfig{figure=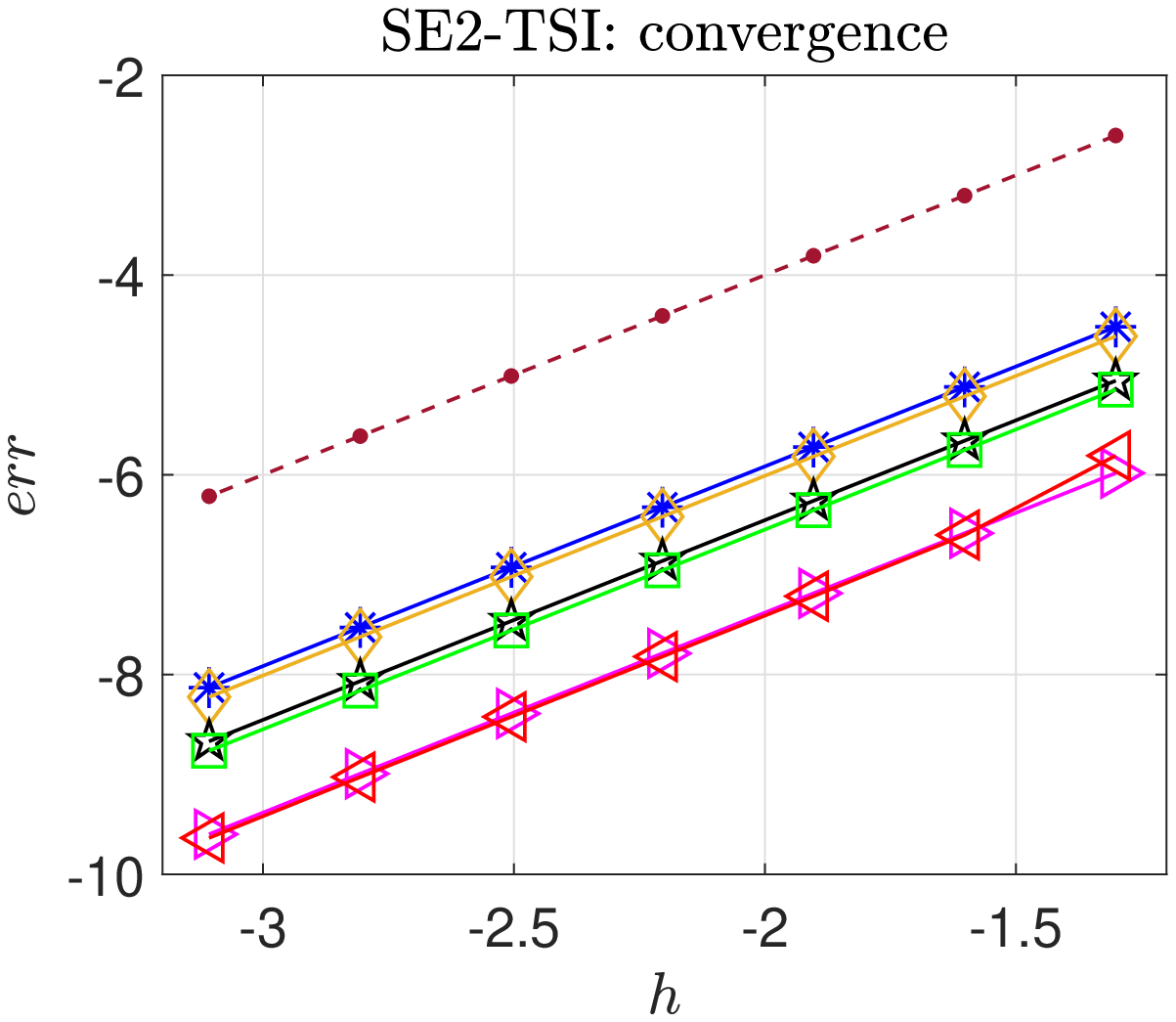,height=4.5cm,width=6.7cm}
\end{array}$$
\caption{CPD example (\ref{CPD}): the log-log plot of temporal error $err=\abs{u^n-u(t_n)}/\abs{u(t_n)}$ of  SE-TSIs at $t=1$ under different $\eps$.}\label{fig30}
\end{figure}
      \begin{figure}[t!]
$$\begin{array}{cc}
\psfig{figure=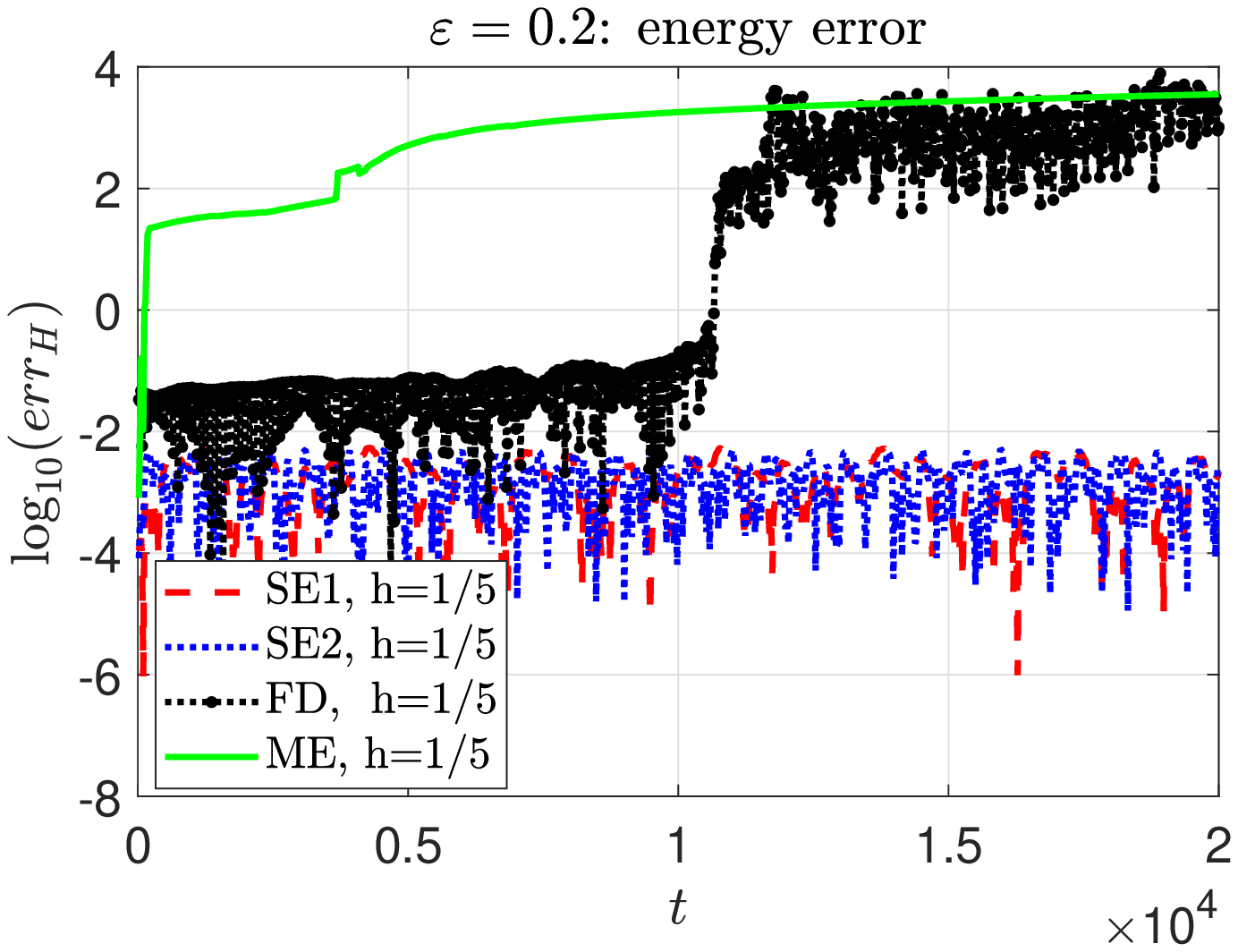,height=3.5cm,width=7.5cm}
\psfig{figure=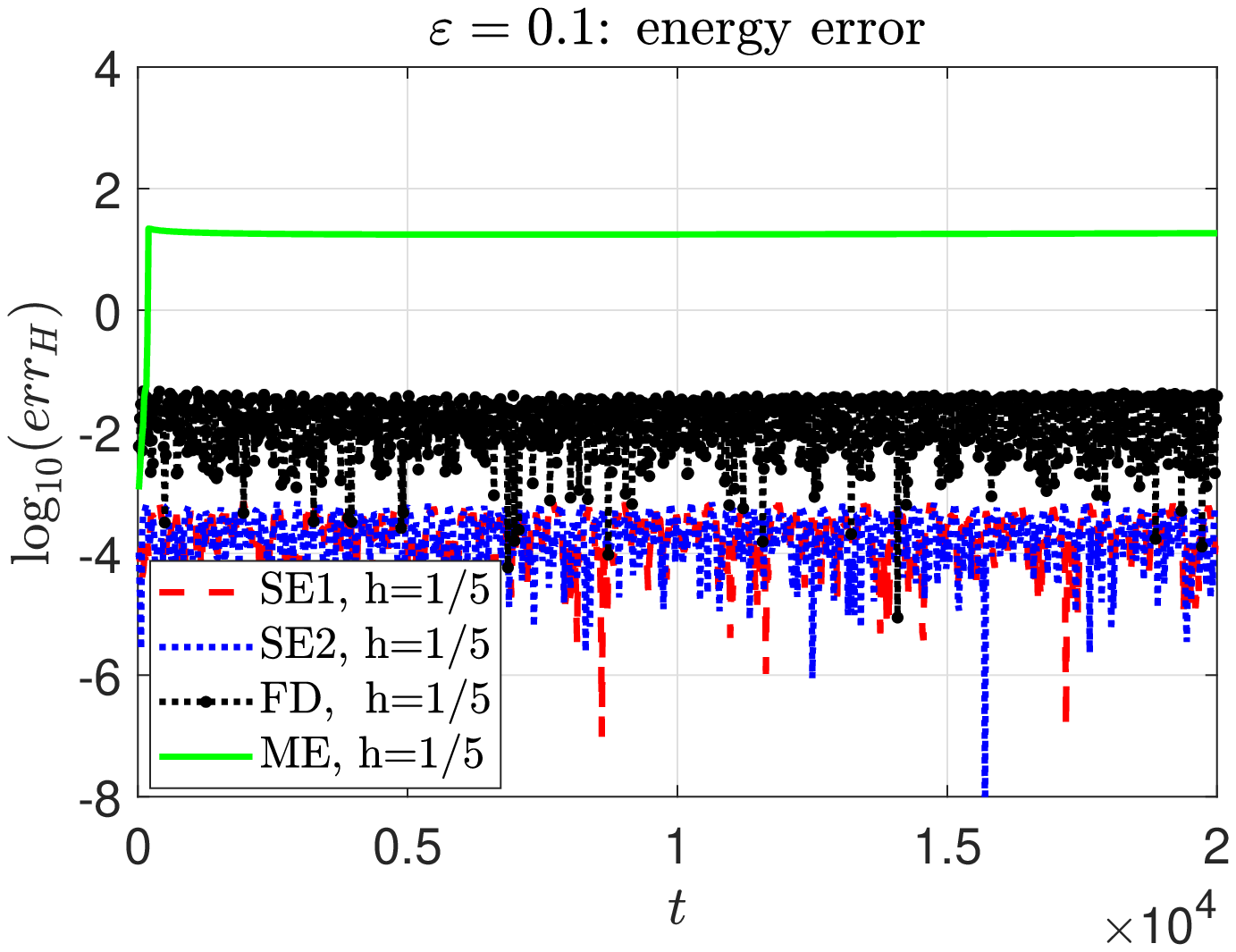,height=3.5cm,width=7.5cm}\\
\psfig{figure=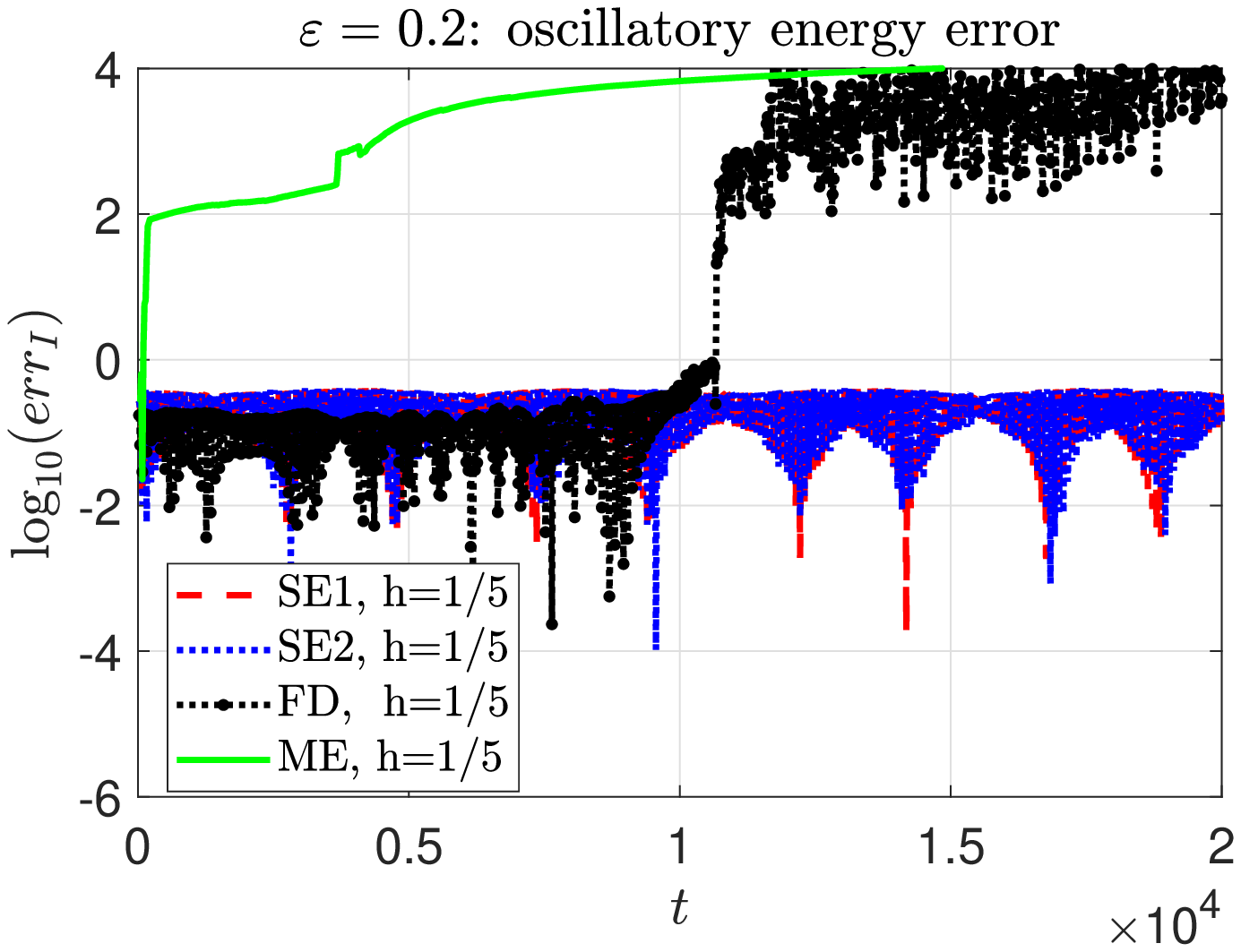,height=3.5cm,width=7.5cm}
\psfig{figure=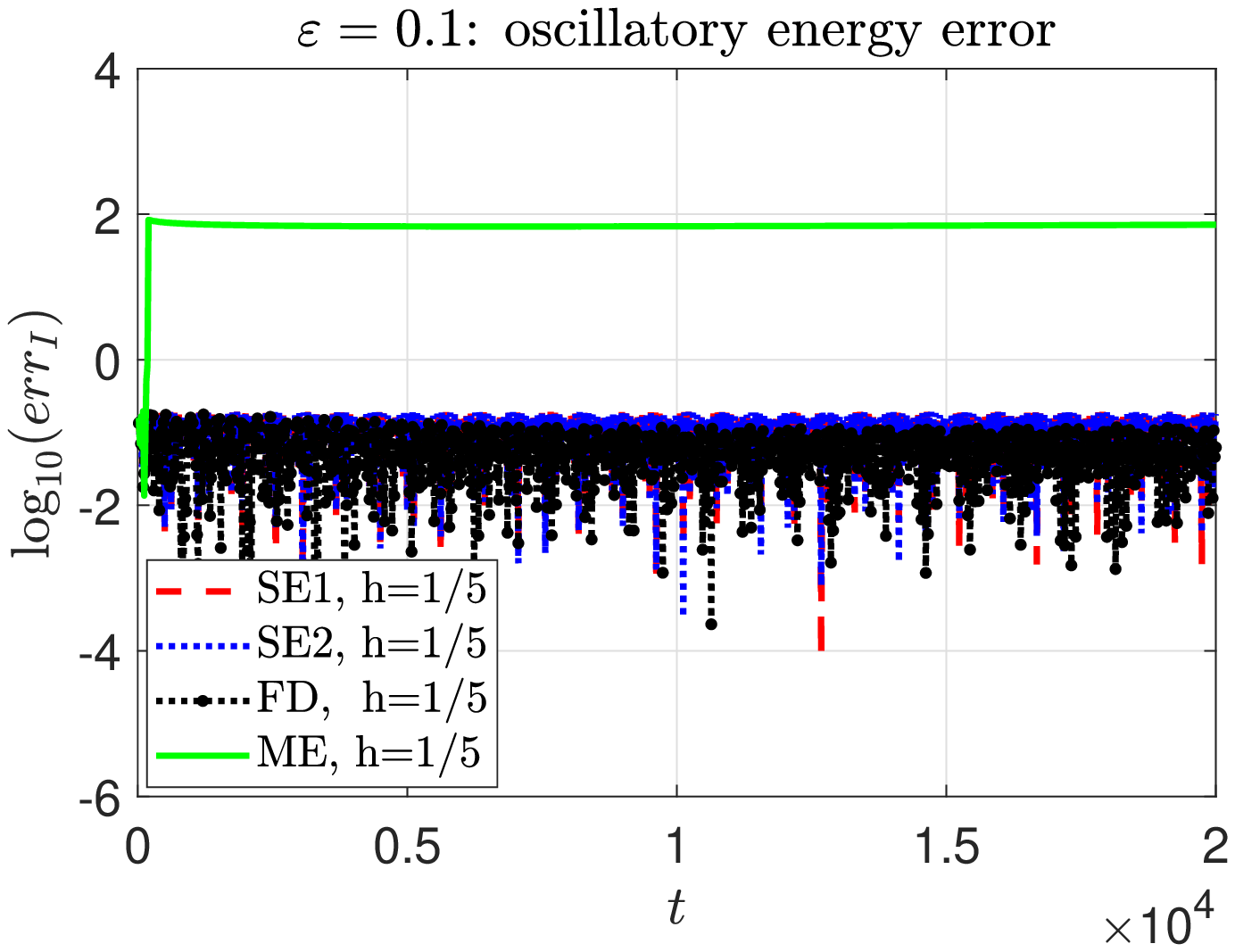,height=3.5cm,width=7.5cm}\\
\psfig{figure=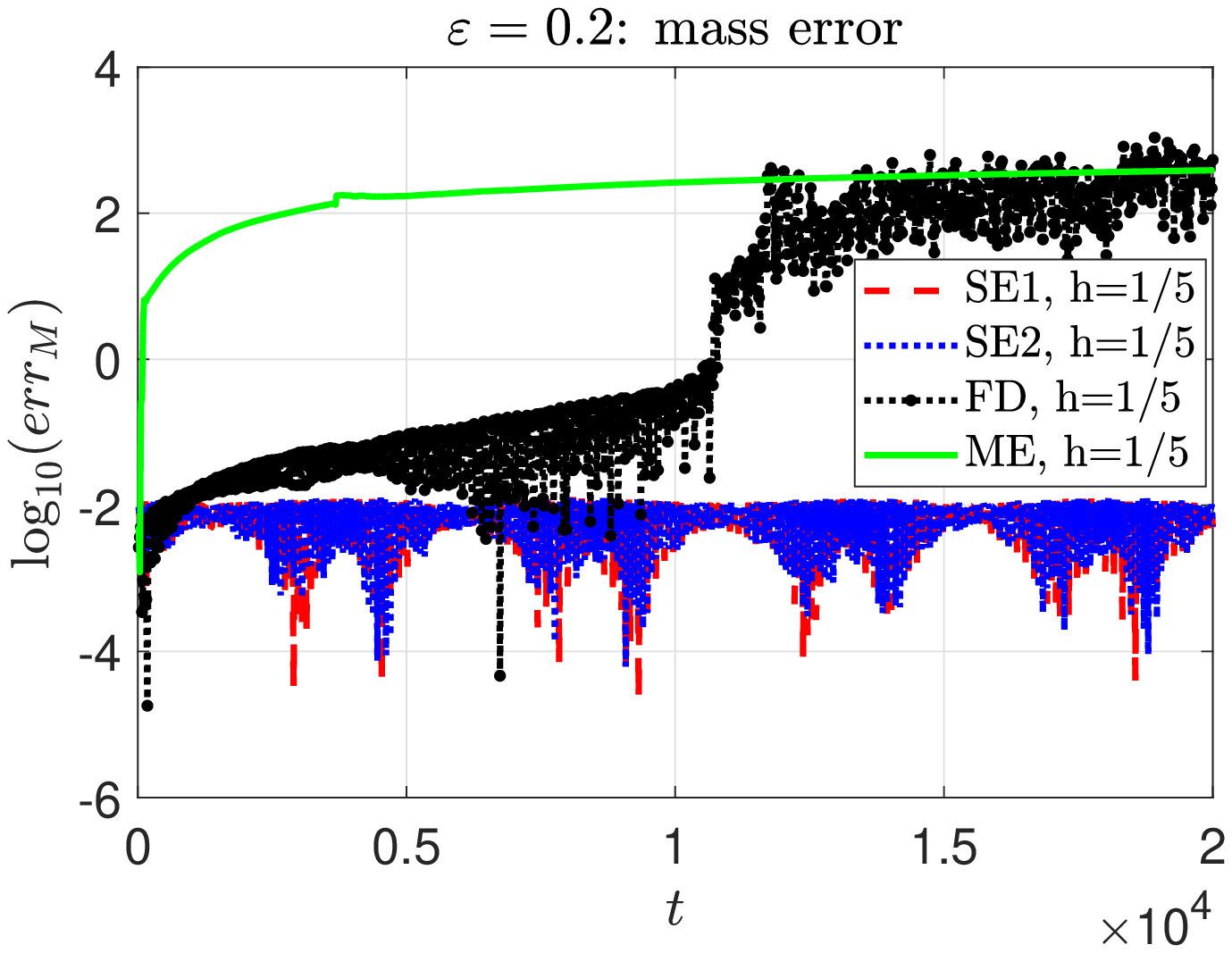,height=3.5cm,width=7.5cm}
\psfig{figure=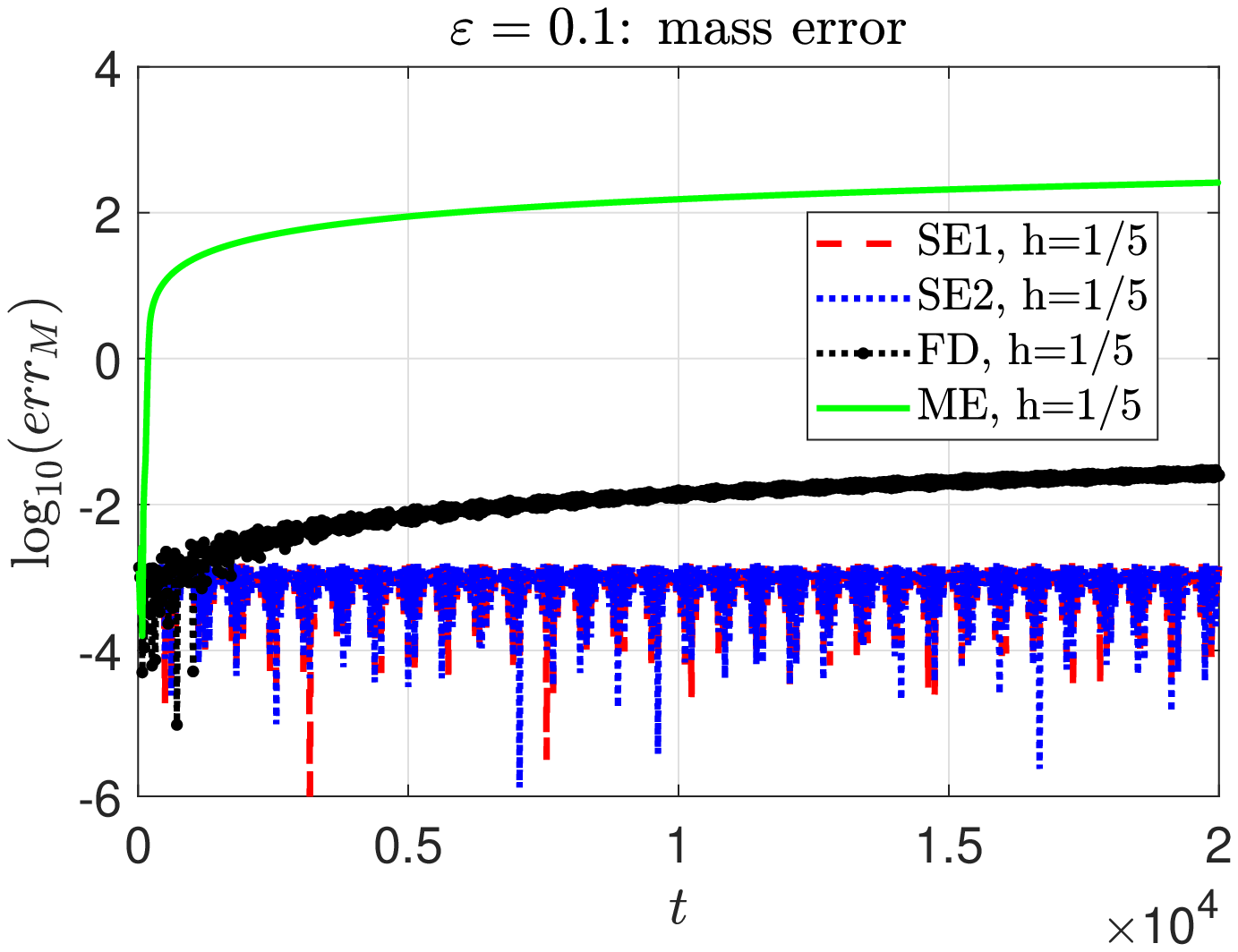,height=3.5cm,width=7.5cm}\\
\end{array}$$
\caption{CPD example (\ref{CPD}): numerical conservations of the schemes  under different $\eps$.}\label{fig31}
\end{figure}

\subsection{Charged-particle dynamics}
 {Last but not least,} we consider
the charged-particle dynamics (CPD) in two space dimensions under a strong magnetic field \cite{Frenod}:
\begin{equation}\left\{
\begin{split}
  &\dot{x}(t)=v(t), \\
  &\dot{v}(t)=\frac{1}{\eps}Bv(t)+g(x(t)),  \quad t>0,
\end{split}\right.\label{CPD}
\end{equation}
where $x(t),v(t)\in\bR^2$ are the unknown, the parameter $\eps\in(0,1]$ is inversely proportional to the strength of the magnetic field, $g(x)=-\nabla U(x)$ is generated by a given scalar potential {$U(x)=1/\sqrt{x_1^2+x_2^2}$},  and $B=\binom{\ 0\ \,1}{-1\ 0}
$.
It is a reduced model from the three dimensional CPD case \cite{vp3D,WZ} when the external magnetic field has a fixed direction and is homogenous in space.
In (\ref{CPD}), the energy $E(x,v)=\frac{1}{2}\abs{v}^2+U(x)$ is conserved.

We note that the CPD (\ref{CPD}) is not in the canonical form. So in order to solve it in our context, we introduce a change of variable 
 $q(t):=x(t),\ p(t):=2 \eps v(t)-Bx(t)$, and then (\ref{CPD}) can be equivalently  formulated as
\begin{equation}\label{model CPD F}
\frac{\textmd{d}}{\textmd{d} t}\begin{pmatrix}
    q(t) \\
     p(t) \\
   \end{pmatrix}
 =\frac{1}{2\eps}\begin{pmatrix}
0 & I_{2\times2} \\
-I_{2\times2}&0
\end{pmatrix} \begin{pmatrix}
-B^2 & -B \\
B&I_{2\times2}
\end{pmatrix}  \begin{pmatrix}
    q(t) \\
    p(t) \\
   \end{pmatrix}+2\eps\begin{pmatrix}
0 & I_{2\times2} \\
-I_{2\times2}&0
\end{pmatrix}\begin{pmatrix}
    \nabla U(q(t))\\
     0 \\
   \end{pmatrix}.
\end{equation}
The above equation fits \eqref{model} with $J=\binom{\ \ 0\ \ \ I_{2\times2}}{-I_{2\times2}\ \ 0}$ and $M= \frac{1}{2}\binom{-B^2\ \ -B}{\ B\ \ \ \ I_{2\times2}}= \frac{1}{2}\binom{I_{2\times2}\ -B}{\ B\ \ \ \ I_{2\times2}}$. It is direct to check that $J$ and $M$ commute and the eigenvalues of $JM$ are $0,\,\pm i$.
The Hamiltonian $H$  and the oscillatory energy $I$ of \eqref{model CPD F} are
$$ H(q,p) :=I(q,p)+2 \eps U(q),\qquad
   I(q,p) :=\frac{1}{4\eps}\begin{pmatrix}
    q \\
     p \\
   \end{pmatrix}^\intercal  \begin{pmatrix}
I_{2\times2} & -B \\
B&I_{2\times2}
\end{pmatrix} \begin{pmatrix}
    q \\
     p \\
   \end{pmatrix},$$ and the mass $m$ is given by $m(q,p) :=q^\intercal q+p^\intercal p.$

Now it is ready to apply the presented methods to \eqref{model CPD F}. Denote by $q^n$ and $p^n$ the numerical solutions of \eqref{model CPD F} obtained from   SE1-TSI \eqref{S1} or SE2-TSI \eqref{S2}, and
then the approximations for  \eqref{CPD} are given by $x^n:=q^n$ and $v^n:=\frac{1}{2\eps}p^n+\frac{1}{2\eps}B q^n$.
We choose the initial value $x(0)=(0.8,0.9)^\intercal,\,v(0)=(0.5,0.6)^\intercal$ for (\ref{CPD}), and we fix $N_\tau=2^6$ in the computations. Figure \ref{fig30} displays the numerical errors in the solution $u=(x,v)^\intercal$, and Figure \ref{fig31} shows the conservation errors in the $H,\,I,\,m$ as before. Here two rather large values of $\eps$ are considered in the tests of Figure \ref{fig31}.
This is because the dynamics in (\ref{CPD}) reduce to zero at the leading order when $\eps\to0$, and then one would not be able to see remarkable differences in the methods for very small $\eps$.
Also due to this special limit situation, as $\eps$ decreases in Figure \ref{fig30}, one can observe some decrease of the error under the same step size. The other numerical observations remain the same as before.

In summary, based on all the numerical results in this section, the UA convergence rate and the  near-conservation laws in long times
 of SE-TSIs are clearly shown. To get rigorous statements on this good long-time behaviour illustrated here, we shall utilize the modulated Fourier expansion to carry out analysis in the next section. To end this section, we remark that our numerical approach can be extended to high order cases.  Applications can also be made to more general models that may not satisfy the assumptions. These will be addressed in a separate work.

\section{Long-term analysis}\label{sec:5LTA}
In this section, we conduct the analysis for the long-time behaviour of the proposed SE-TSI methods and establish their near-conservation laws illustrated numerically in Section \ref{sec:4num}.
To be precise with the model, we
consider here a real-valued finite-dimensional Hamiltonian system case of (\ref{model}) for simplicity,
where  $u(t): [0,t_{\textmd{end}}]\rightarrow \mathbb{R}^{2d}$, $J=\binom{\ \ 0\ \ \ I_{d\times d}}{-I_{d\times d}\ 0}$,  $M$ is a $2d\times 2d$ diagonal\footnote{The diagonal form of $M$ is not essential, since the model \eqref{model}
and the numerical methods are invariant under a diagonalization of the matrix.} matrix independent of $\eps$,
and $H_1(u)$ is a real-valued scalar function.
Moreover, it is assumed that $JM=MJ$.
The conserved Hamiltonian is given by
   \begin{equation}\label{Hcondi}
 H(u(t))=I(u(t))+H_1(u(t))\ \ \textmd{with}\ \     I(u)=\frac{1}{2\eps}u^\textup{T}Mu,
\end{equation}
and the mass of the system is denoted by $ m(u)=u^\textup{T}u.$

 {As an analytical technique firstly introduced in \cite{Hairer}, the modulated Fourier expansion (MFE) has been found powerful to investigate various long-time phenomena in oscillatory ODE/PDE systems and in their corresponding numerical methods. Though this technique has been successfully applied to different numerical methods on several models \cite{Cohen0,Gauckler,Lubich ICM,Hairer,Lubich,LubichW,WZ}, it has not been considered yet for any UA methods on models with unbounded energy. The schemes in such cases, for instance our two-scale method which enlarges the dimension of the original system, are usually more involving than the standard algorithms. The unbounded energy and the complexity of the scheme bring more difficulties in the derivation of the MFE as well as its almost-invariants. Moreover, the existing long-time analysis often require the small initial data condition \cite{Cohen0,Gauckler,Lubich1,Lubich2}. While, we should note that the scale of the initial data here can be independent from the scaled physical parameter $\eps$. We managed to separate the two in our result and analysis below.}

To state the main result and the proof, we first introduce some notations and assumptions.

\subsection{Some preliminaries}\label{subsec1}
As stated in Section \ref{sec:2review}, the two-scale formulation for \eqref{model} is
\begin{equation}\label{2scale0}  \partial_tU(t,\tau)+\frac{1}{\eps}\partial_\tau U(t,\tau)={ \fe^{-\tau JM}J\nabla H_1\left(\fe^{\tau JM}U(t,\tau)\right)},\quad t>0,\ \tau\in\bT,
\end{equation}
where  $U(0,\tau)$ is given by  \eqref{2nd data}.
{Notice that
$$\partial_U H_1\left(\fe^{\tau JM} U(t,\tau)\right)=\big(\fe^{\tau JM}\big)^\intercal \nabla H_1\left(\fe^{\tau JM}U(t,\tau)\right)=
\fe^{-\tau JM}\nabla H_1\left(\fe^{\tau JM}U(t,\tau)\right),$$ and
$$\fe^{-\tau JM}J\fe^{\tau JM}=J\fe^{-\tau JM}\fe^{\tau JM}=J.$$ The formulation \eqref{2scale0} is then given by
\begin{equation}\label{2scale} \partial_tU(t,\tau)+\frac{1}{\eps}\partial_\tau U(t,\tau)=J\partial_U H_1\left(\fe^{\tau A} U(t,\tau)\right),\quad t>0,\ \tau\in\bT,
\end{equation}
with $A=JM$}.  With some even integer $N_\tau>0$,
the $\tau$-direction is discretized by the Fourier pseudospectral method \cite{Shen,Trefethen} under mesh size $\Delta\tau=2\pi/N_\tau$. With $\widehat{U}_\ell$ the discrete Fourier coefficient of $U$, we denote the solution after the semi-discretiation (in $\tau$) as
 $$ \widetilde{U}(t,\tau):=\sum\limits_{|l|\leq N_\tau/2}\widehat{U}_\ell(t)\fe^{i\ell\tau},$$
 and as a convention,
the first term and the last term in the summation are taken with a factor
$1/2$ here and after.
 We collect all the $\widehat{U}_\ell(t)$ in $\widehat{\mathbf{U}}$ and get a  $D:=2d(N_{\tau}+1)$-dimensional system
 of ODEs
\begin{equation}\label{2scale Fourier CPD2}
\frac{d}{dt}\widehat{\mathbf{U}}=i \Omega \widehat{\mathbf{U}}
+\mathbf{J}f\left(\widehat{\mathbf{U}}\right),
\end{equation}
  with $f\left(\widehat{\mathbf{U}}\right)=\mathcal{F}\left(\partial_{U} H_1\left( \textbf{K}  \mathcal{F}^{-1}\widehat{\mathbf{U}}\right)\right)$, $\mathbf{J}=J\otimes I_{N_{\tau}+1}$, and a diagonal matrix $\Omega=\textmd{diag}(\Omega_1,\ldots,\Omega_{2d})$:
  \begin{equation*}\begin{array}[c]{ll}
&\Omega_1=\Omega_2=\cdots=\Omega_{2d}:=\frac{1}{\eps}\textmd{diag}\big(\frac{N_{\tau}}{2},
  \frac{N_{\tau}}{2}-1,\ldots,-\frac{N_{\tau}}{2}\big).
\end{array}
\end{equation*}
 Here  {$\textbf{K}$ is the matrix obtained from $\fe^{\tau A}$ with the semi-discretiation in $\tau$.}
The  coefficient vector $\widehat{\mathbf{U}}$ is in dimension $D$ and its components are  denoted by
 $$
 \widehat{\mathbf{U}}=\left(\widehat{U}_{-\frac{N_{\tau}}{2},1},\ldots,
 \widehat{U}_{\frac{N_{\tau}}{2},1},\widehat{U}_{-\frac{N_{\tau}}{2},2},\ldots,
 \widehat{U}_{\frac{N_{\tau}}{2},2},\ldots,\widehat{U}_{-\frac{N_{\tau}}{2},2d},\ldots,
 \widehat{U}_{\frac{N_{\tau}}{2},2d}\right).$$
With these notations, the components of the nonlinearity are of the form
  \begin{equation}\label{potential}f_{j,l}\left(\widehat{\mathbf{U}}\right)=\frac{\partial}{\partial \widehat{\mathbf{U}}_{-j,l}}V\left(\widehat{\mathbf{U}}\right)\ \ \textmd{with}\ \ V\left(\widehat{\mathbf{U}}\right)=\frac{1}{2d(N_{\tau}+1)}\sum\limits_{l=1}^{2d}
  \sum\limits_{k=-N_{\tau}/2}^{N_{\tau}/2}H_1\left(\left( \textbf{K} \mathcal{F}^{-1}\widehat{\mathbf{U}}\right)_{k,l}\right),\end{equation}
and we denote $\mathcal{F} \widehat{\mathbf{U}}$
the discrete Fourier transform on each $\widehat{\mathbf{U}}_{:,l}$ where $$\widehat{\mathbf{U}}_{:,l}=\left(\widehat{U}_{-\frac{N_{\tau}}{2},l},\widehat{U}_{-\frac{N_{\tau}}{2}+1,l},\ldots,
 \widehat{U}_{\frac{N_{\tau}}{2},l}\right).$$
Similar   notations will be used  for the
vectors and diagonal matrices with the same dimension as $\widehat{\mathbf{U}}$.

In the main theorem, we will need a non-resonance condition and to describe it, we further introduce the following  notations. Let
\begin{equation*}\displaystyle
\begin{array}[c]{ll}
&
\mathbf{k}=\left(k_{-\frac{N_{\tau}}{2},1},\ldots,
k_{\frac{N_{\tau}}{2},1},k_{-\frac{N_{\tau}}{2},2},\ldots,
 k_{\frac{N_{\tau}}{2},2},\ldots,k_{-\frac{N_{\tau}}{2},2d},\ldots,
k_{\frac{N_{\tau}}{2},2d}\right),\\
&\displaystyle |\mathbf{k}|=\sum_{l=1}^{2d}\sum_{j=-N_{\tau}/2}^{N_{\tau}/2}
|k_{j,l}|,\ \
 \boldsymbol{\omega}=(\textmd{diagonal elements of}\  \Omega),\ \ \mathbf{k}\cdot \boldsymbol{\omega}=\sum_{l=1}^{2d}\sum_{j=-N_{\tau}/2}^{N_{\tau}/2}
k_{j,l}\omega_{j,l},
\end{array}
\end{equation*}
and the resonance module
$\mathcal{M}= \{\mathbf{k}\in \mathbb{Z}^{D}:\ \mathbf{k}\cdot
\boldsymbol{\omega}=0\}$.
Denote by $\mathcal{K}$   a set of representatives of the
equivalence classes in $\mathbb{Z}^{D}\backslash \mathcal{M}$, which
are chosen such that for each $\mathbf{k}\in\mathcal{K}$ the sum $|\mathbf{k}|$ is the
minimal in the equivalence class $[\mathbf{k}] = \mathbf{k} +\mathcal{M}$ and also $-\mathbf{k}\in\mathcal{K}.$  For an integer $N>0$, we denote
\begin{equation*}\begin{array}{ll}\mathcal{N}&=\{\mathbf{k}\in\mathcal{K}:\ \textmd{there exists an}\ l\in \{1,\ldots,2d\}\
 \textmd{such that}\
|\mathbf{k}_{:,l}|=|\mathbf{k}| \textmd{ and }|\mathbf{k}|\leq N\},\\
\mathcal{N}^*&=\mathcal{N} \bigcup
\{\langle 0\rangle_l\}_{l=1,2,\ldots,2d},
\end{array} \end{equation*}
where $\langle j\rangle_l$ denotes the unit coordinate vector $(0, \ldots , 0, 1, 0,
\ldots,0)^{\intercal}\in \mathbb{R}^{D}$  with the only entry $1$ at the $(j,l)$-th
position. $\norm{\cdot}$ in this section denotes the vector $2$-norm or the matrix $2$-norm.

The detailed long-term analysis in the following will be made on the \textbf{SE1-TSI} method:
    \begin{equation}\begin{array}[c]{ll} &\widehat{\mathbf{U}}^{n+\frac{1}{2}}=\fe^{hi\Omega/2 }\widehat{\mathbf{U}}^n+\frac{1}{2}h\varphi_1 (hi\Omega/2 )
 g\left(\widehat{\mathbf{U}}^{n+\frac{1}{2}}\right),\ \
\widehat{\mathbf{U}}^{n+1}=\fe^{hi \Omega}\widehat{\mathbf{U}}^n+h
\varphi_1 (hi \Omega)g\left(\widehat{\mathbf{U}}^{n+\frac{1}{2}}\right),
\end{array} \label{SeM1}%
\end{equation}
for solving \eqref{2scale Fourier CPD2} with  $g(\widehat{\mathbf{U}})=\mathbf{J}f(\widehat{\mathbf{U}}).$
The same can be done for SE2-TSI with little modifications in the proof part, which will be omitted for simplicity in the section.

\subsection{Main result}
The near-conservation laws of the proposed methods are stated as follows.
\begin{theorem} \label{Long-time thm} 
\emph{(Near-conservation laws).}
Assume that the initial value $u_0$ of \eqref{model}  {satisfies} $0<\delta_0:=\norm{u_0}<1$ and the nonlinearity $F(u):=\nabla H_1(u)$ is smooth with $F(0)=F'(0)=0.$
Under the step size requirement $h/\sqrt{\eps} \geq c_0 > 0$ and
the numerical non-resonance condition
\begin{equation}
\left|\sin\left(\frac{1}{2}h(\omega_{j,1}- \mathbf{k}\cdot \boldsymbol{\omega})\right)\right| \geq c_1 \sqrt{\eps}\ \
\mathrm{for} \ \ j=-\frac{N_{\tau}}{2},\ldots,\frac{N_{\tau}}{2}, \ \ \mathbf{k} \in \mathcal{N}
   \ \   \mathrm{with} \ \  |\mathbf{k}|\leq N,\label{numerical non-resonance cond}%
\end{equation}
for some large $N\geq3$ and $c_1>0$,
we have the following long-time near-conservation laws for the $u^n$ produced by SE1-TSI \eqref{S1} or SE2-TSI \eqref{S2},
\begin{equation*}
\begin{aligned}
 \frac{\eps}{\delta_0^2} \big(H\left(u^n\right)- H\left(u^0\right)\big)&={\mathcal{O}\left(\eps\delta_0^2\right)}+\mathcal{O}(  \delta_{\mathcal{F}}),\quad
 \frac{\eps}{\delta_0^2} \big(I\left(u^n\right)- I\left(u^0\right)\big)={\mathcal{O}\left(\eps \delta_0^2\right)}+\mathcal{O}(  \delta_{\mathcal{F}}),\\
 \frac{1}{\delta_0^2} \big(m\left(u^n\right) -m\left(u^0\right)\big)&={\mathcal{O}\left(\eps \delta_0^2\right)}+\mathcal{O}(  \delta_{\mathcal{F}}),\quad 0\leq nh\leq \frac{1}{h}\delta_0^{-N+3}.
\end{aligned}
\end{equation*}
Here we denote $\delta_{\mathcal{F}}$  the error brought by the Fourier pseudospectral method. The constants symbolized by
$\mathcal{O}$ depend on $N, c_0, c_1$ but not on $n,\ h,\ \eps$.
\end{theorem}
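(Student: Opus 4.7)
The plan is to use the modulated Fourier expansion (MFE) for the numerical solution $\widehat{\mathbf{U}}^n$ of the scheme \eqref{SeM1}, adapted to the two-scale setting. Concretely, I would seek an ansatz of the form
\begin{equation*}
\widehat{\mathbf{U}}^n \;\approx\; \widetilde{\mathbf{U}}(t)\;:=\;\sum_{\mathbf{k}\in\mathcal{N}^*} \fe^{i(\mathbf{k}\cdot\boldsymbol{\omega})t}\,\bz^{\mathbf{k}}(\eps t), \qquad t=nh,
\end{equation*}
where the modulation functions $\bz^{\mathbf{k}}$ are smooth in the slow time $\eps t$. Plugging the ansatz into \eqref{SeM1}, expanding $\fe^{hi\Omega}$ and $\varphi_1$ around each frequency $\mathbf{k}\cdot\boldsymbol\omega$, and matching the coefficients of $\fe^{i(\mathbf{k}\cdot\boldsymbol\omega)t}$ on both sides yields a system of formal modulation equations. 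The dominant (non-oscillatory) equation for $\bz^{\langle 0\rangle_l}$ governs the slow flow, while for $\mathbf{k}\neq \langle 0\rangle_l$ the factor $\fe^{ih(\mathbf{k}\cdot\boldsymbol{\omega}-\omega_{j,l})}-1$ appears as a denominator; the non-resonance assumption \eqref{numerical non-resonance cond} controls exactly this, ensuring each $\bz^{\mathbf{k}}$ is of size $\mathcal O(\delta_0^{|\mathbf{k}|})$ together with its derivatives up to some order.

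Next I would verify iteratively, as in \cite{Hairer,Lubich}, that the formal series can be truncated at total degree $N$ while the modulation functions stay bounded on the slow time interval $[0,T]$ with $T$ of order $1$. The key estimates require: (i) smallness $\|\bz^{\mathbf{k}}\|\lesssim \delta_0^{|\mathbf{k}|}$ coming from $F(0)=F'(0)=0$ (so the nonlinearity has no linear term and each iteration gains a factor $\delta_0$), (ii) the step-size restriction $h/\sqrt{\eps}\geq c_0$ used to bound the oscillatory denominators together with \eqref{numerical non-resonance cond}, and (iii) the boundedness of $\varphi_1$. One then shows a defect bound: the truncated ansatz satisfies the SE1-TSI recursion up to a defect of size $\mathcal O(h\,\delta_0^{N+1})$, which by a standard Lady Windermere's fan argument gives
\begin{equation*}
\|\widehat{\mathbf{U}}^n-\widetilde{\mathbf{U}}(nh)\| \;\leq\; C\,\delta_0^{N+1}\quad \text{for}\quad 0\leq nh\leq h^{-1}\delta_0^{-N+3}.
\end{equation*}

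The heart of the argument is to exhibit almost-invariants of the modulation system. Symmetry of the scheme and the gradient structure \eqref{potential} let one write the truncated modulation equations as the Euler--Lagrange equations of an extended action functional $\mathcal S[\bz]$. Applying (a) a global $U(1)$-type phase invariance $\bz^{\mathbf{k}}\mapsto \fe^{i\mathbf{k}\cdot\boldsymbol\mu}\bz^{\mathbf{k}}$ and (b) the time-translation invariance to $\mathcal S$ produces, via Noether's theorem, two conserved quantities $\widetilde I[\bz]$ and $\widetilde H[\bz]$. A direct calculation, matching their leading non-oscillatory coefficients to the formulas \eqref{Hcondi} (after evaluating at $\tau=nh/\eps$ and reverting the change of variable $u^n=\fe^{\tau A}U^n(\tau)$), shows
\begin{equation*}
\widetilde H[\bz](\eps t)=H(u^n)+\mathcal O(\eps\delta_0^2)+\mathcal O(\delta_\mathcal{F}),\qquad \widetilde I[\bz](\eps t)=I(u^n)+\mathcal O(\eps\delta_0^2)+\mathcal O(\delta_\mathcal{F}),
\end{equation*}
with an analogous identity for the mass $m$, obtained from the quadratic gauge invariance $\bz\mapsto \fe^{i\alpha}\bz$ of the free flow.

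The combination of the remainder estimate with the exact conservation of $\widetilde H,\widetilde I,\widetilde m$ along the modulation flow then yields the stated near-conservation on one slow interval; patching $\mathcal O(h^{-1}\delta_0^{-N+3})$ such intervals (using that the modulation system can be restarted from $\widehat{\mathbf{U}}^n$ with a controlled loss) gives the full long-time statement. The main obstacle is step three: because the original energy $I=\tfrac1{2\eps}u^\intercal Mu$ is unbounded in $\eps$ while $\|u^0\|=\delta_0$ is allowed to be independent of $\eps$, one must carefully track how the scales $\delta_0$ and $\eps$ enter the modulation coefficients and the Noether quantities, and then verify that the two-scale extension together with the pseudospectral discretization (introducing $\delta_\mathcal F$) does not destroy the Hamiltonian/gradient structure needed for Noether's argument. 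The implicit half-step in SE1-TSI also requires an extra fixed-point expansion at each step to recast the scheme in the explicit gradient form compatible with the MFE machinery.
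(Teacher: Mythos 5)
Your proposal follows essentially the same route as the paper: a modulated Fourier expansion of the Fourier-discretized two-scale scheme, with the non-resonance condition controlling the small denominators, $F(0)=F'(0)=0$ yielding the $\mathcal{O}(\delta_0^{|\mathbf{k}|})$ bounds, a defect estimate plus Lady Windermere's fan for the remainder, Noether-type almost-invariants from the phase invariance $\eta^{\mathbf{k}}\mapsto \fe^{i(\mathbf{k}\cdot\boldsymbol{\mu})\lambda}\eta^{\mathbf{k}}$ of the extended potential and from multiplying the modulation equations by $\dot{\eta}^{-\mathbf{k}}$, and finally matching these to $H$, $I$, $m$ and telescoping over the long interval. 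The only cosmetic differences are your parametrization of the modulation functions by the slow time $\eps t$ (the paper tracks $\eps$-dependence through derivative bounds instead) and your packaging of the almost-invariants via an action functional, which the paper instantiates directly through the Taylor expansion of the operator $L(h\mathcal{D}+ih\mathbf{k}\cdot\boldsymbol{\omega})$ and the total-derivative identities.
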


Note that since  $N$ can be arbitrarily large, {the near-conservation} results hold for a long time.
In practice, as $N_{\tau}$ increases the $\mathcal{O}( \delta_{\mathcal{F}})$ part of the error quickly decreases to a high-precision thanks to the periodicity and the smoothness in $\tau$,
and so the main part of the (relative) conservation error is $\mathcal{O}(\eps\delta_0^2)$.
The proof of the theorem for SE1-TSI will be given in Sections \ref{subsec2}-\ref{subsec3} by using the modulated Fourier expansion \cite{Lubich ICM,Lubich}. Before we step into the proof, we have
some remarks regarding the assumptions in the theorem.

\begin{remark}
The parameter $\delta_0$ denotes the scale of the initial data of the original system, and its smallness, i.e., $\delta_0\in(0,1)$ in Theorem \ref{Long-time thm} is technically required often in the long-time analysis. However, we should note here that $\delta_0$  is totally independent from $\eps$. The requirement on $h$ and $\eps$ comes from the technique of the modulated Fourier expansion that has been introduced to analyze numerical integrators particularly
with comparatively large time step (see, e.g., \cite{Cohen0,Gauckler,Hairer,Lubich,LubichW,WZ}), and this indeed matches with our purpose of designing UA schemes.
\end{remark}
\begin{remark}The assumption $F(0)=0$ is essential for our analysis, and various Hamiltonian nonlinear models indeed satisfy such assumption.
$F'(0)=0$ is however not essential, since the linearization of $F(u)$ can be moved to the linear part of  \eqref{model}. This procedure gives a new nonlinearity $\widetilde{F}(u)=F(u)-F'(0)u$ and it satisfies $\widetilde{F}'(0)=0$.
\end{remark}
\begin{remark}
The result of Theorem \ref{Long-time thm} is established for a general $F(u)$.
For a high order pure power nonlinearity, e.g., our numerical example on the NLS equation (\ref{NLS eq}) with a cubic nonlinearity, improved long-time
near-conservation laws could be derived. This will be discussed in Section \ref{subsec-NS}.
\end{remark}

\subsection{Modulated Fourier expansion}\label{subsec2}
In this subsection, we derive the modulated Fourier expansion of the
SE1-TSI integrator.

\begin{lemma}\label{MFE lem}
 Under the  conditions given in Theorem \ref{Long-time thm} and  for $ t=nh$, the
SE1-TSI method \eqref{SeM1}  can be expressed by the
following modulated Fourier expansion,
\begin{equation}\begin{array}{ll}
\widehat{\mathbf{U}}^{n}&=\sum\limits_{\mathbf{k}\in\mathcal{N}^*}
\mathrm{e}^{i \mathbf{k} \cdot \boldsymbol{\omega}
t}\zeta^{\mathbf{k}}(t)+\widetilde{R}_{1,N}(t),\quad
\widehat{\mathbf{U}}^{n+\frac{1}{2}}=\sum\limits_{\mathbf{k}\in\mathcal{N}^*}
\mathrm{e}^{i \mathbf{k} \cdot \boldsymbol{\omega}
t}\eta^{\mathbf{k}}(t)+\widetilde{R}_{2,N}(t),
\end{array} \label{MFE-ERKN-0}%
\end{equation}
 where the remainders  are bounded by
\begin{equation}
 \widetilde{R}_{1,N}(t)=\mathcal{O}\left(t h \delta_0^{N+1}\right),\quad
 \widetilde{R}_{2,N}(t)=\mathcal{O}\left(t h \delta_0^{N+1}\right).
\label{remainder}%
\end{equation}
It turns out that we can assume that $\zeta^{\mathbf{k}}_{0,l}\ \textmd{or}\ \eta^{\mathbf{k}}_{0,l}=0$ if $\mathbf{k}\neq \langle 0\rangle_l$, and the coefficient functions are  single waves,
i.e., $\zeta^{\mathbf{k}}_{:,m}\ \textmd{or}\ \eta^{\mathbf{k}}_{:,m}=0$ if $\abs{\mathbf{k}_{:,l}}>0\ \textmd{and}\ l\neq m$.
The coefficient functions as well as all their derivatives are
bounded by
\begin{equation}%
\begin{array}{ll}
 \eta_{j,l}^{\langle j\rangle_l}(t)=\mathcal{O}(\delta_0),\quad
  \dot{\eta}_{j,l}^{\langle j\rangle_l}(t)=\mathcal{O}\left(\frac{\eps^2}{h}\nu \delta_0^3\right),
 \quad
 \eta_{j,l}^{\mathbf{k}}(t)=\mathcal{O}\Big(\eps \nu \delta_0^{|\mathbf{k}|}\Big),\quad   {\mathbf{k}\neq \langle j\rangle_l,}
\end{array} %
\label{coefficient func}%
\end{equation}
where $l=1,\ldots,2d,$ and $\nu=\frac{1}{c_1 \sqrt{\eps}}$.
 The functions $\zeta^{\mathbf{k}}$ satisfy the same bounds of $\eta^{\mathbf{k}}$, and $\zeta_{j,l}^{\langle j\rangle_l}$ have the following relationship with $\eta_{j,l}^{\langle j\rangle_l}$:
\begin{equation}%
\begin{array}{ll}
 \zeta_{j,l}^{\langle j\rangle_l}(t)=\fe^{ih\omega_{j,l}/2 } \eta_{j,l}^{\langle j\rangle_l} (t)+\mathcal{O}(h) \dot{\eta}_{j,l}^{\langle j\rangle_l} (t)
 =\fe^{ih\omega_{j,l}/2 } \eta_{j,l}^{\langle j\rangle_l} (t)+\mathcal{O}(\eps^2\nu \delta_0^3).
\end{array} %
\label{coefficient func2}%
\end{equation}
 Since $\widetilde{\mathbf{U}}^{n} \in \mathbb{R}^{2d}\approx \sum\limits_{\mathbf{k}\in\mathcal{N}^*}
\mathrm{e}^{i \mathbf{k} \cdot \boldsymbol{\omega}
t}  \sum\limits_{|l|\leq N_\tau/2}\zeta^{\mathbf{k}}_{l,j}\fe^{i l\tau}$,
it is required that $\zeta^{-\mathbf{k}}_{-l,j}=\overline{\zeta^{\mathbf{k}}_{l,j}}$. Similar results also hold for $\eta^{\mathbf{k}}$.
From the bounds of the coefficients, it immediately  follows that
\begin{equation}\begin{aligned}
&\mathbf{r^1}_{:,l}:=\sum\limits_{\mathbf{k}\in\mathcal{N}^*}
\mathrm{e}^{i \mathbf{k} \cdot \boldsymbol{\omega}
t}\zeta_{:,l}^{\mathbf{k}}(t)-\zeta_{:,l}^{\langle j\rangle_l}(t)
\mathrm{e}^{i  \omega_{j,l} t}-\zeta_{:,l}^{-\langle j\rangle_l}(t)
\mathrm{e}^{-i  \omega_{j,l} t}  \quad
\textmd{with} \quad \norm{\mathbf{r^1}}\leq C \eps \nu \delta_0^2,\\
&\mathbf{r^2}_{:,l}:=\sum\limits_{\mathbf{k}\in\mathcal{N}^*}
\mathrm{e}^{i \mathbf{k} \cdot \boldsymbol{\omega}
t}\eta_{:,l}^{\mathbf{k}}(t)-\eta_{:,l}^{\langle j\rangle_l}(t)
\mathrm{e}^{i  \omega_{j,l} t}  -\eta_{:,l}^{-\langle j\rangle_l}(t)
\mathrm{e}^{-i  \omega_{j,l} t}\quad
\textmd{with} \quad \norm{\mathbf{r^2}}\leq C \eps \nu \delta_0^2.
\end{aligned}
\label{jth TMFE}
\end{equation}
The constants symbolised by the notation $\mathcal{O}$ depend on $ c_0, c_1$, but are independent of $h, \eps$ or $N_{\tau}$.
\end{lemma}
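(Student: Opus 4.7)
The plan is to substitute the modulated Fourier ansatz (\ref{MFE-ERKN-0}) into both the half-step and the full-step relations of the SE1-TSI scheme (\ref{SeM1}), then derive a hierarchy of modulation equations for $\zeta^{\mathbf{k}}(t)$ and $\eta^{\mathbf{k}}(t)$ by matching terms with the same oscillation phase $\fe^{i\mathbf{k}\cdot\boldsymbol{\omega} t}$. To this end I would Taylor expand $\zeta^{\mathbf{k}}(t+h)$ and $\eta^{\mathbf{k}}(t+h/2)$ around $t$ so that the discrete recurrences become continuous differential-algebraic relations in $t$, and expand the nonlinearity $g(\widehat{\mathbf{U}})$ as a polynomial in the unknowns using $F(0)=F'(0)=0$ so that every contribution is at least quadratic and a Fa\`a-di-Bruno-type expansion exhibits the coefficient at level $|\mathbf{k}|$ as a sum of products of coefficients at strictly lower levels.

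After this matching, for each component $(j,l)$ and each $\mathbf{k}\in\mathcal{N}^*$ one obtains an equation of the schematic form $(\fe^{ih\mathbf{k}\cdot\boldsymbol{\omega}}-\fe^{ih\omega_{j,l}})\,\zeta^{\mathbf{k}}_{j,l}(t)+O(h)\dot{\zeta}^{\mathbf{k}}_{j,l}(t)+\ldots=h\varphi_1(ih\omega_{j,l})\,\mathcal{Q}^{\mathbf{k}}_{j,l}\!\bigl(\{\zeta^{\mathbf{k}'}\}_{|\mathbf{k}'|<|\mathbf{k}|}\bigr)$. For the resonant index $\mathbf{k}=\pm\langle j\rangle_l$ the leading prefactor vanishes and the relation is an ODE for the slowly varying envelope; for every other $\mathbf{k}$ the non-resonance hypothesis (\ref{numerical non-resonance cond}) yields $|\fe^{ih\mathbf{k}\cdot\boldsymbol{\omega}}-\fe^{ih\omega_{j,l}}|=2|\sin(h(\mathbf{k}\cdot\boldsymbol{\omega}-\omega_{j,l})/2)|\geq 2c_1\sqrt{\eps}$, so one solves algebraically for $\zeta^{\mathbf{k}}_{j,l}(t)$, and this is precisely what produces the factor $\nu=1/(c_1\sqrt{\eps})$ in (\ref{coefficient func}). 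The single-wave structure of the coefficients is inherited from the block-diagonality of $\Omega$ in the coordinate index $l$ together with the specific $f_{j,l}=\partial_{-j,l}V$ form exhibited in (\ref{potential}).

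The bounds (\ref{coefficient func}) are then established by induction on $|\mathbf{k}|$. The base case $|\mathbf{k}|=1$ gives $\eta^{\pm\langle j\rangle_l}_{j,l}=O(\delta_0)$ from the prepared initial data (\ref{2nd data}), and the inductive step combines the at-least-quadratic nonlinearity with the non-resonance factor $\nu$ and the $O(\eps)$ size of $h\varphi_1(ih\Omega)$ on non-resonant modes to yield $\eta^{\mathbf{k}}=O(\eps\nu\delta_0^{|\mathbf{k}|})$ for $|\mathbf{k}|\geq 2$. The relation (\ref{coefficient func2}) is read off directly from the half-step identity (\ref{S1 a}) by substituting the ansatz and using the derivative bound on $\dot\eta^{\langle j\rangle_l}_{j,l}$ that comes from the resonant-level ODE. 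The remainder bound (\ref{remainder}) is obtained by inserting the truncated expansion into the scheme, controlling the one-step defect by $O(h^2\delta_0^{N+1})$ (truncation discards terms of size $\delta_0^{N+1}$ and the scheme contributes an extra $h$ through $\varphi_1$), and summing defects by a Lady-Windermere-fan argument that uses (\ref{coefficient func}) to control the Lipschitz constant of the scheme along the ansatz. Finally (\ref{jth TMFE}) follows from (\ref{coefficient func}) via the geometric summation $\sum_{|\mathbf{k}|\geq 2}\delta_0^{|\mathbf{k}|}\leq C\delta_0^2$.

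The main obstacle I expect is the bookkeeping forced by the two-scale formulation: the enlarged state dimension $D=2d(N_\tau+1)$ produces an enormous multi-index set for $\mathbf{k}$, and one must exploit both the block structure of $\Omega$ and the convolution structure of the pseudospectral nonlinearity (\ref{potential}) to show that only the single-wave coefficients survive. A second subtlety is the clean separation of the two independent scales $\delta_0$ and $\eps$: the non-resonance divisions produce powers of $\nu=1/\sqrt{\eps}$, while the induction on the nonlinearity produces powers of $\delta_0$, and one must keep these two streams distinct and verify uniformity of all estimates under $h/\sqrt{\eps}\geq c_0$. This contrasts with classical MFE analyses (e.g.~\cite{Hairer,Lubich}), in which the small-initial-data and small-$\eps$ assumptions are often conflated.
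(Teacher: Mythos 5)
Your proposal follows essentially the same route as the paper: insert the ansatz into both stages of SE1-TSI, match phases to obtain the modulation system, use the non-resonance condition to solve the non-resonant components algebraically (producing the $\nu=1/(c_1\sqrt{\eps})$ factor together with the $\mathcal{O}(\eps)$ from $1/\omega_{j,l}$), treat the resonant components as ODEs for slow envelopes, bound the coefficients inductively from the prepared initial data, and control the remainder by a defect-plus-Gronwall (Lady Windermere) argument with one-step defects of size $\mathcal{O}(h^2\delta_0^{N+1})$. This matches the paper's proof, which merely organizes the two-stage substitution slightly differently by first eliminating the nonlinearity between the stages via the operator $\mathcal{L}_1(h\mathcal{D})$ to relate $\eta^{\mathbf{k}}$ to $\zeta^{\mathbf{k}}$.
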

\begin{proof}
In the proof of this lemma, we will construct the functions
\begin{equation}
\begin{array}{ll}
\Phi(t)&= \sum\limits_{\mathbf{k}\in\mathcal{N}^*}
\mathrm{e}^{i \mathbf{k} \cdot\boldsymbol{\omega}
t}\zeta^{\mathbf{k}}(t),\quad
\Psi(t)= \sum\limits_{\mathbf{k}\in\mathcal{N}^*}
\mathrm{e}^{i \mathbf{k} \cdot\boldsymbol{\omega}
t}\eta^{\mathbf{k}}(t),
\end{array} \label{MFE-1}%
\end{equation} with smooth coefficient functions $\zeta^{\mathbf{k}}$ and $\eta^{\mathbf{k}}$, and show   that there are
only   small defects $ \widetilde{R}_{1,N}$ and $ \widetilde{R}_{2,N}$  when $\Phi(t), \Psi(t)$ are inserted into the
numerical scheme \eqref{SeM1}.

From the scheme \eqref{SeM1}, it follows that
    \begin{equation*} \widehat{\mathbf{U}}^{n+\frac{1}{2}}=\fe^{  hi\Omega/2  }\widehat{\mathbf{U}}^n+\frac{1}{2}\varphi_1 (hi\Omega/2)\varphi^{-1}_1 (hi \Omega )
\left(\widehat{\mathbf{U}}^{n+1}-\fe^{hi \Omega }\widehat{\mathbf{U}}^n\right).
\end{equation*}
Inserting  \eqref{MFE-1}   into this one and
using the operator  \begin{equation*}\mathcal{L}_1(h\mathcal{D})= \fe^{hi\Omega/2  }+\frac{1}{2}\varphi_1 (hi\Omega/2 )\varphi^{-1}_1 (hi \Omega )\left(\fe^{h\mathcal{D}} -\fe^{hi \Omega } \right),
\end{equation*}
one finds
 \begin{equation}\label{etar}
 \eta^{\mathbf{k}}(t)=\mathcal{L}_1(h\mathcal{D}+i h\mathbf{k} \cdot\boldsymbol{\omega})\zeta^{\mathbf{k}}(t).
\end{equation}
Here  $\mathcal{D}$ is the differential operator (see \cite{Lubich}).
With \eqref{MFE-1} and the second equation in \eqref{SeM1}, we get
 \begin{equation*}
 \mathcal{L}(h\mathcal{D}) \Psi(t)=hg(\Psi(t)),
\end{equation*}
where
 \begin{equation*}
 \mathcal{L}(h\mathcal{D})= \big(\fe^{h\mathcal{D}} -\fe^{hi \Omega } \big)\varphi^{-1}_1 (hi \Omega ) \mathcal{L}^{-1}_1(h\mathcal{D}).
\end{equation*}
Considering the Taylor series of the
nonlinearity, one obtains
\begin{equation*}
\begin{aligned}&\mathcal{L}(h\mathcal{D})\Psi(t)
=h\sum\limits_{\mathbf{k}\in\mathcal{N}^*}\mathrm{e}^{i \mathbf{k}
\cdot \boldsymbol{\omega}t} \sum\limits_{m\geq
2}\frac{g^{(m)}(0)}{m!}
\sum\limits_{\mathbf{k}^1+\cdots+\mathbf{k}^m=\mathbf{k}} \Big[\eta^{\mathbf{k}^1}\cdot\ldots\cdot
\eta^{\mathbf{k}^m}\Big](t).
\end{aligned} %
\end{equation*}
Inserting  the ansatz \eqref{MFE-1} and then comparing the coefficients
of $\mathrm{e}^{i (\mathbf{k} \cdot\boldsymbol{\omega} ) t}$  yield
\begin{equation*}
\begin{aligned}
&\mathcal{L}(h\mathcal{D}+ih \mathbf{k} \cdot\boldsymbol{\omega}
)\eta^{\mathbf{k}}(t)=h\sum\limits_{m\geq
2}\frac{g^{(m)}(0)}{m!}
\sum\limits_{\mathbf{k}^1+\cdots+\mathbf{k}^m=\mathbf{k}} \Big[\eta^{\mathbf{k}^1}\cdot\ldots\cdot
\eta^{\mathbf{k}^m}\Big](t),
\end{aligned} %
\end{equation*}
which gives the  modulation system  for the coefficients
$\eta^{\mathbf{k}}(t)$ in the modulated Fourier expansion.

According
to  the Taylor expansion of $  L(h\mathcal{D})$,  one has
\begin{equation}\label{LhD}
\begin{aligned}L(h\mathcal{D})= &-h \Omega i  +\frac{h\Omega/2 }{\sin(h\Omega/2 )}
(h\mathcal{D})+ \frac{1}{8} h \Omega \sec^2(h\Omega/4 )i (h\mathcal{D})^2-\cdots,\\
L(h\mathcal{D}+i h\mathbf{k}\cdot \boldsymbol{\omega})=&-\frac12 h \Omega  \csc(h\Omega/4 )
\sec\left(\frac{1}{4}h(\Omega -2 \mathbf{k}\cdot \boldsymbol{\omega}I)\right)\sin\left(\frac{1}{2}h(\Omega -\mathbf{k}\cdot \boldsymbol{\omega}I)\right)
i \\
&+\frac14 h  \Omega \cot(h\Omega/4 ) \sec^2\left(\frac{1}{4}h(\Omega -2\mathbf{k}\cdot \boldsymbol{\omega}I)\right) (h\mathcal{D})+\cdots,
\end{aligned}
\end{equation}
and for the   particular components, we have
\begin{equation*}
\begin{aligned}
\big(L(h\mathcal{D}+i h\langle j\rangle_l\cdot \boldsymbol{\omega})\big)_{j,l}=&
\frac{h \omega_{j,l}/2}{\sin(h\omega_{j,l}/2)}h\mathcal{D} -i\frac{h\omega_{j,l}}{8} \sec^2(h\omega_{j,l}/4) (h\mathcal{D})^2+\cdots.\\
\end{aligned}
\end{equation*}
Use these results and then the following ansatz of the modulated
Fourier functions $\eta^{\mathbf{k}}(t)$  can be obtained:
\begin{equation}\label{ansatz}%
\begin{array}{ll}
& \dot{\eta}_{j,l}^{\langle j\rangle_l}(t)=\frac{\sin(h\omega_{j,l}/2)}{h \omega_{j,l}/2}
\big(F^{\mathbf{1}}_{j0}(\cdot)+\cdots\big),\quad j=-\frac{N_{\tau}}{2},\ldots,\frac{N_{\tau}}{2},\\
&\eta_{j,l}^{\langle 0 \rangle_l}(t)=\frac{1}{\omega_{j,l}}
i \big(F^{\mathbf{0}}_{j0}(\cdot)+\cdots\big),\quad j\neq0,\\
&\eta_{j,l}^{\mathbf{k}}(t)=\frac{-2\sin(h\Omega/4_{j,l})\cos\big(\frac{1}{4}h(\omega_{j,l}-2 \mathbf{k}\cdot \boldsymbol{\omega})\big)}{ \omega_{j,l}\sin\big(\frac{1}{2}h(\omega_{j,l}- \mathbf{k}\cdot \boldsymbol{\omega})\big)}
i \big(F^{\mathbf{k}}_{j0}(\cdot)+\cdots\big),\quad
\mathbf{k}\neq\langle j \rangle_l,\\
\end{array} %
\end{equation}
where  $F^{\mathbf{k}}$ and so on are formal
series, and the dots  stand for power series in $h$.  Since the
series in the ansatz usually diverge,  we
 truncate them after the $\mathcal{O}(h^{N+3})$ terms.
We  determine the initial values for the differential equations by
considering the condition that \eqref{MFE-ERKN-0} is satisfied
without remainder term for $t = 0$. From
$\Psi(0)=\widehat{\mathbf{U}}^{1/2}=\mathcal{O}(\widehat{\mathbf{U}}^0)$,
it follows that $\widehat{\mathbf{U}}_{j,l}^{1/2}=\eta^{\langle j\rangle_l}_{j,l}(0)+\mathcal{O}(\eps \nu \delta_0^2).$
This gives the initial values for $\eta^{\langle j\rangle_l}_{j,l}(0)=\mathcal{O}(\delta_0)$.
 The bound \eqref{coefficient func} is obtained on the basis of the initial value and the ansatz \eqref{ansatz}.  In combination with \eqref{etar}, the relationship \eqref{coefficient func2} is derived immediately.

As the last part of the proof, we analyze the  defects
\eqref{remainder}. Firstly, define the discrepancies  when $\Phi(t), \Psi(t)$ are inserted into the
numerical scheme \eqref{SeM1}:
\begin{equation*}
d_1(t)=\Psi(t)-\fe^{hi\Omega/2 }\Phi(t)-\frac{1}{2}h\varphi_1 (hi\Omega/2 )
 g(\Psi(t)),\quad
d_2(t)=\Phi(t+h)-\fe^{hi \Omega}\Phi(t)-h
\varphi_1 (hi \Omega)g(\Psi(t)).
\end{equation*}
These discrepancies come from two aspects:  $\mathcal{O}(h^{N+3})$ in  the truncation of  the ansatz \eqref{ansatz} and
$\mathcal{O}(\eps \nu \delta_0^{N+1})$ in the truncation of the modulated Fourier expansions. This implies
$$d_j(t)=\mathcal{O}\left(h^{N+3}\right)+\mathcal{O}\left(h \eps \nu \delta_0^{N+1}\right)=\mathcal{O}\left(h^2 \delta_0^{N+1}\right)\ \ \textmd{for}\ \ j=1,2.$$
Then,  the  error  $e_n=\widehat{\mathbf{U}}^{n}-\Phi(t_n),\ E_{n+\frac{1}{2}}=\widehat{\mathbf{U}}^{n+\frac{1}{2}}-\Psi(t_n)$ satisfy
\begin{equation*}
\begin{array}[c]{ll} E_{n+\frac{1}{2}}=\fe^{hi\Omega/2}e_{n}+\frac{1}{2}h
\varphi_1 (hi\Omega/2)\left[g\left(\widehat{\mathbf{U}}^{n+\frac{1}{2}}\right)-g\left(\Psi(t_n)\right)\right]
+d_1(t_n),\\
e_{n+1}=\fe^{hi \Omega}e_{n}+h
\varphi_1 (hi \Omega)\left[g\left(\widehat{\mathbf{U}}^{n+\frac{1}{2}}\right)-g(\Psi(t_n))\right]+d_2(t_n).
\end{array}
\end{equation*}
 With the Lipschitz condition of $g$ and Gronwall's inequality, the defects
\eqref{remainder}  can be derived by the standard convergence estimates (see, e.g.,  \cite{Lubich1,Lubich2,Hairer,Lubich,WW}), and the proof is complete.
\end{proof}

\subsection{Proof of  Theorem \ref{Long-time thm}}\label{subsec3}
 \begin{proof}
 In this subsection, we  derive  three almost-invariants  of the
functions in the modulated Fourier expansions which lead to the near-conservation laws in the theorem.

\textbf{Three almost-invariants.}
Let $\vec{\zeta}=\big(\zeta^{\mathbf{k}}\big)_{\mathbf{k}\in \mathcal{N}^*}$ and
$\vec{\eta}=\big(\eta^{\mathbf{k}}\big)_{\mathbf{k}\in \mathcal{N}^*}.$
From the proof of Lemma \ref{MFE lem}, it follows that
\begin{equation*}
\begin{aligned}
& L(h\mathcal{D}) \mathbf{U}_{h}(t)=h
g(\mathbf{U}_{h}(t))+\mathcal{O}\left(h^2\delta_0^{N+1}\right),
\end{aligned}
\end{equation*}
where we use  the   notations \begin{equation*}
\begin{aligned}\mathbf{U}_{h}(t)=\sum\limits_{\mathbf{k}\in\mathcal{N}^*}\mathbf{U}^{\mathbf{k}}_{h}(t)\quad \textmd{with}\quad  \ \mathbf{U}^{\mathbf{k}}_{h}(t)=\mathrm{e}^{i (\mathbf{k} \cdot \boldsymbol{\omega})
t}\eta^{\mathbf{k}}(t).
\end{aligned}
\end{equation*}
Then we have
\begin{equation*}
\begin{aligned} & L(h\mathcal{D}) \mathbf{U}^{\mathbf{k}}_{h}(t)=h\mathbf{J}\nabla_{\mathbf{U}^{-\mathbf{k}}}
\mathcal{V}\left(\vec{\mathbf{U}}(t)\right)
+\mathcal{O}\left(h^2\delta_0^{N+1}\right),
\end{aligned}
\end{equation*}
where $\mathcal{V}(\vec{\mathbf{U}}(t))$ is defined as
\begin{equation}
\begin{aligned}
&\mathcal{V}\left(\vec{\mathbf{U}}(t)\right):=\sum\limits_{m= 1}^N
\frac{V^{(m+1)}(0)}{(m+1)!}
\sum\limits_{\mathbf{k}^1+\cdots+\mathbf{k}^{m+1}=0}
 \left(\mathbf{U}^{\mathbf{k}^1}_{h}\cdot\ldots\cdot
\mathbf{U}^{\mathbf{k}^{m+1}}_{h}\right)(t),
\end{aligned}
\label{newuu}%
\end{equation}
with the potential $V$ given in \eqref{potential} and the notaiton $$\vec{\mathbf{U}}(t)=\left( \mathbf{U}^{\mathbf{k}}_{h}(t) \right)_{\mathbf{k}\in \mathcal{N}^*}.$$
By switching to the quantity $\eta^{\mathbf{k}}(t)$, we obtain \begin{equation}
\begin{aligned} & L(h\mathcal{D}+i h\mathbf{k}
\cdot\boldsymbol{\omega}) \eta^{\mathbf{k}} (t) =h\mathbf{J}\nabla_{\eta^{-\mathbf{k}}}\mathcal{V}(\vec{\eta}(t))
+\mathcal{O}\left(h^2\delta_0^{N+1}\right).
\end{aligned}\label{lhdy}%
\end{equation}

 Define
the vector functions  of $\vec{\vartheta}(\lambda,t)$ as
$$\vec{\vartheta}(\lambda,t)=\left(\mathrm{e}^{i (\mathbf{k} \cdot
\boldsymbol{\mu}) \lambda}\eta^{\mathbf{k}}(t)\right)_{\mathbf{k}\in\mathcal{N}^*},$$
for any real sequence $\boldsymbol{\mu}$.
 Then it can be observed from the definition \eqref{newuu}
  that $\mathbf{k}^1+\cdots+\mathbf{k}^{m+1}=0$. Thus,
  $\mathcal{V} (\vec{\vartheta}(\lambda,t))$ does  not depend on
$\lambda$, and then the following result is obtained
\begin{equation*}
\begin{aligned}0=& \left.\frac{d}{d\lambda}\right|_{\lambda=0}\mathcal{V}
\left(\vec{\vartheta}(\lambda,t)\right)=\sum\limits_{\mathbf{k}\in\mathcal{N}^*}i (\mathbf{k}
\cdot\boldsymbol{\mu} )\left(\eta^{-\mathbf{k}}(t)\right)^\intercal
\nabla_{{\eta^{-\mathbf{k}}}}\mathcal{V} (\vec{\eta}(t)).
\end{aligned}
\end{equation*}
In addition to \eqref{lhdy}, we also obtain
\begin{equation}\label{almost in 2}%
\begin{aligned}0
=&\frac{1}{h}\sum\limits_{\mathbf{k}\in\mathcal{N}^*} i (\mathbf{k}
\cdot\boldsymbol{\mu} ) (\eta^{-\mathbf{k}}(t))^\intercal  \mathbf{J}L(h\mathcal{D}+i h\mathbf{k}
\cdot \boldsymbol{\omega} ) \eta^{\mathbf{k}}(t)+\mathcal{O}\left(h\delta_0^{N+1}\right).
\end{aligned}
\end{equation}
By the relation $\eta^{-\mathbf{k}}_{-l,j}=\overline{\eta^{\mathbf{k}}_{l,j}}$, we introduce a new expression
$\overline{\eta^{-\mathbf{k}}}=\textbf{S}\eta^{\mathbf{k}}$, where
  \begin{equation*}
\textbf{S}=\textmd{diag}(S_1,S_2,\ldots,S_{2d})\quad \mbox{with}\begin{array}[c]{ll}
&S_1=S_2=\cdots=S_{2d}:=\left(
                       \begin{array}{cccc}
                         0 & \cdots & 0 &1 \\
                         0 & \cdots & 1 & 0 \\
                          \vdots &\vdots & \vdots & \vdots \\
                         1& \cdots & 0 & 0 \\
                       \end{array}
                     \right)_{(N_{\tau}+1)\times (N_{\tau}+1)}.
\end{array}
\end{equation*}
Consequently,  \eqref{almost in 2} becomes \begin{equation*}
\begin{aligned}0
=&\frac{1}{h}\sum\limits_{\mathbf{k}\in\mathcal{N}^*} i (\mathbf{k}
\cdot\boldsymbol{\mu} )\left(\overline{\eta^{\mathbf{k}}(t)}\right)^\intercal  \textbf{S}\mathbf{J}L(h\mathcal{D}+i h\mathbf{k}
\cdot \boldsymbol{\omega}) \eta^{\mathbf{k}}(t)+\mathcal{O}\left(h\delta_0^{N+1} \right).
\end{aligned}
\end{equation*}
It is easy to check that $ \textbf{S}\mathbf{J}= \mathbf{J}\textbf{S}$, which gives the skew-symmetry of $ \textbf{S}\mathbf{J}$. Therefore, there exist  a unitary matrix $\textbf{P}$
and a diagonal matrix $\Lambda=\textmd{diag}(-I,I)$ such that $ \textbf{S}\textbf{J}=  i \textbf{P}^\textup{H}
\Lambda \textbf{P}.$
We now have \begin{equation*}
\begin{aligned}\frac{1}{h} \sum\limits_{\mathbf{k}\in\mathcal{N}^*} i (\mathbf{k}
\cdot\boldsymbol{\mu} ) (\overline{\eta^{\mathbf{k}}(t)})^\intercal   i \textbf{P}^\textup{H}
\Lambda \textbf{P}L(h\mathcal{D}+i h\mathbf{k}
\cdot\boldsymbol{\omega}) \eta^{\mathbf{k}}(t)=\mathcal{O}\left(h\delta_0^{N+1}\right).
\end{aligned}
\end{equation*}
In the light of the schemes of  $\textbf{P}$ and $\eta^{\mathbf{k}}(t)$, 
one can verify that {the result $\textbf{P}L(h\mathcal{D}+i h\mathbf{k}
\cdot\boldsymbol{\omega}) \eta^{\mathbf{k}}(t)$ has two possible expressions:
$L(h\mathcal{D}+i h\mathbf{k}
\cdot\boldsymbol{\omega})\textbf{P}\eta^{\mathbf{k}}(t)$ or $\widetilde{L}(h\mathcal{D}+i h\mathbf{k}
\cdot\boldsymbol{\omega})\textbf{P}\eta^{\mathbf{k}}(t)$,
where $\widetilde{L}_{j,l}=L_{-j,l}$ for $j=-N_{\tau}/2,\ldots, N_{\tau}/2$ and $l=1,\ldots,2d.$ In what follows, we only prove the result for the second case and the proof also holds for the first case.}
This yields the result
\begin{equation}\label{duu-n1}
\begin{aligned}\frac{1}{h} \sum\limits_{\mathbf{k}\in\mathcal{N}^*}  (\mathbf{k}
\cdot\boldsymbol{\mu} ) (\overline{\alpha^{\mathbf{k}}  })^\intercal
 { \Lambda \widetilde{L}(h\mathcal{D}+i h\mathbf{k}
\cdot \boldsymbol{\omega} )} \alpha^{\mathbf{k}}=\mathcal{O}\left(h\delta_0^{N+1}\right),
\end{aligned}
\end{equation}
with $\alpha^{\mathbf{k}}=(\textbf{P}\eta^{\mathbf{k}})$.

Thanks to the Taylor expansions of $ L(h\mathcal{D})$   given in
 \eqref{LhD} and the following formulae
 \cite{Lubich} {for a column vector $y=y(t)$ and its $l$-th derivative $y^{(l)}(t)$ for any $l\in\bN$:}
\begin{equation*}
\begin{aligned}
&\textmd{Re}\left(\bar{y}^\intercal y^{(2l+1)}\right)=\textmd{Re}
\frac{\mathrm{d}}{\mathrm{d}t}\left(\bar{y}^\intercal y^{(2l)}-\cdots\pm
\left(\bar{y}^{(l-1)}\right)^\intercal y^{(l+1)}\mp \frac{1}{2}\left(\bar{y}^{(l)}\right)^\intercal y^{(l)}\right),\\
&\textmd{Im}\left(\bar{y}^\intercal y^{(2l+2)}\right)=\textmd{Im}
\frac{\mathrm{d}}{\mathrm{d}t}\left(\bar{y}^\intercal
y^{(2l+1)}-\dot{\bar{y}}^\intercal y^{(2l)}+\cdots\pm
\left(\bar{y}^{(l)}\right)^\intercal y^{(l+1)}\right),
\end{aligned}
\end{equation*}
it is easy to check that   the real part in the
 left-hand side of
\eqref{duu-n1} is a total derivative.  Hence, there exists a function
${\mathcal{I}(t)}$ such
 that
$\frac{\mathrm{d}}{\mathrm{d}t}{\mathcal{I}(t)}=\mathcal{O}(h\delta_0^{N+1})$
and the statement
\begin{equation*}
{\mathcal{I}(t)=\mathcal{I}(0)}+\mathcal{O}\left(th\delta_0^{N+1}\right)
\end{equation*}
 is obtained by an integration.
Concrete construction of $\mathcal{I}$ can be obtained by choosing some $\boldsymbol{\mu}$,
which is shown as follows.

With the previous expression $ L(h\mathcal{D})$   given in
 \eqref{LhD} and  the bounds
 of Lemma \ref{MFE lem}, we obtain
\begin{equation*}\begin{aligned}
&\mathcal{I}_1(t)=   \sum\limits_{l=1}^{2d} \sum\limits_{j=-N_{\tau}/2}^{N_{\tau}/2} \mu_{j,l}\Big\{
\big(\overline{\alpha^{\langle j \rangle_l}}\big)^\intercal (t)
\mathcal{M}_1\left(\widetilde{\Omega} \right)
\alpha^{\langle j \rangle_l} (t) {-
\big(\overline{\alpha^{\langle j \rangle_l}}\big)^\intercal (t)\mathcal{M}_2\left(\widetilde{\Omega} \right)
 \dot{\alpha}^{\langle j \rangle_l} (t)\Big\}
  +\mathcal{O}\left( \nu^2\eps^2\delta_0^4\right)}\\
 =&   \sum\limits_{l=1}^{2d} \sum\limits_{j=-N_{\tau}/2}^{N_{\tau}/2} \mu_{j,l}\Big\{
\big(\overline{\eta^{\langle j \rangle_l}}\big)^\intercal (t)
\textbf{P}^\textup{H} \mathcal{M}_1\left(\widetilde{\Omega} \right) \textbf{P}\eta^{\langle j \rangle_l} (t)
 {-
\big(\overline{\eta^{\langle j \rangle_l}}\big)^\intercal (t)
\textbf{P}^\textup{H}  \mathcal{M}_2\left(\widetilde{\Omega} \right) \textbf{P}\dot{\eta}^{\langle j \rangle_l} (t)\Big\}{+\mathcal{O}\left( \nu^2\eps^2\delta_0^4\right)},}
\end{aligned}
\end{equation*}
where we use the notations
\begin{equation*}\begin{aligned}
\mathcal{M}_1\left(\widetilde{\Omega}\right)=& \Lambda
  \frac{\cos\big(\frac{1}{4}h\widetilde{\Omega} \big)}{\mathrm{sinc}\big(\frac{1}{4}h\widetilde{\Omega} \big)}
  \sec\left(\frac{1}{4}h\left(2\omega_{j,l}I-\widetilde{\Omega}\right)\right),\\
\mathcal{M}_2\left(\widetilde{\Omega} \right)=&\Lambda\frac{1}{8}h^2\widetilde{\Omega} \cot\left(\frac{1}{4}
h\widetilde{\Omega} \right)\sec^2\left(\frac{1}{4}h\left(2\omega_{j,l}I-\widetilde{\Omega} \right)\right)
\tan^2\left(\frac{1}{4}h\left(2\omega_{j,l}I-\widetilde{\Omega}\right)\right),
\end{aligned}
\end{equation*}
with the matrix $\widetilde{\Omega} _{j,l}=\Omega_{-j,l}$ for $j=-N_{\tau}/2,\ldots, N_{\tau}/2$ and $l=1,\ldots,2d.$
In addition to  \begin{equation*}\begin{aligned}\textbf{P}^\textup{H}
\mathcal{M}_j\left(\widetilde{\Omega}\right) \textbf{P}\eta^{\langle j \rangle_l} (t)=&\textbf{P}^\textup{H} \textbf{P} \mathcal{M}_j(\Omega)\eta^{\langle j \rangle_l} (t)=\mathcal{M}_j(\Omega)\eta^{\langle j \rangle_l} (t),\ \  \textmd{for}\ \ j=1,2,\end{aligned}
\end{equation*} the above  $\mathcal{I}_1(t)$ changes to
\begin{equation*}\begin{aligned}
&\mathcal{I}_1(t)=  \sum\limits_{l=1}^{2d} \sum\limits_{j=-N_{\tau}/2}^{N_{\tau}/2} \mu_{j,l}\Big\{
\big(\overline{\eta^{\langle j \rangle_l}}\big)^\intercal (t)\mathcal{M}_1(\Omega)\eta^{\langle j \rangle_l} (t)
-\big(\overline{\eta^{\langle j \rangle_l}}\big)^\intercal (t)\mathcal{M}_2(\Omega)\dot{\eta}^{\langle j \rangle_l} (t)\Big\}+\mathcal{O}\left(\nu^2\eps^2\delta_0^4\right)\\
 =&  \sum\limits_{l=1}^{2d} \sum\limits_{j=-N_{\tau}/2}^{N_{\tau}/2} (-1)^{\lfloor\frac{2l-1}{2d}\rfloor}\mu_{j,l}\Big\{
 \frac{1}{\mathrm{sinc}\big(\frac{1}{2}h\omega_{j,l}\big)} \abs{\eta^{\langle j \rangle_l}_{j,l}}^2(t)-
 \frac{h^2\omega_{j,l}}{8}\sec^2\left(\frac{1}{4}h\omega_{j,l} \right)
\textmd{Re} \left(\eta^{\langle j \rangle_l}_{j,l} \dot{\eta}^{\langle j \rangle_l}_{j,l}
 \right)(t)\Big\}\\&+\mathcal{O}\left(\nu^2\eps^2\delta_0^4\right).
\end{aligned}
\end{equation*}
 Considering  $\mu_{j,l}=(-1)^{\lfloor\frac{2l-1}{2d}\rfloor}M(l,l)\mathrm{sinc}\big(\frac{1}{2}h\omega_{j,l}\big)$ leads to
\begin{equation*}\begin{aligned}
&\mathcal{I}_1(t)\\
=& \sum\limits_{l=1}^{2d} \sum\limits_{j=-N_{\tau}/2}^{N_{\tau}/2} M(l,l)
  \Big\{\abs{\eta^{\langle j \rangle_l}_{j,l}}^2(t) -
 \frac{h}{4}\sin\left(\frac{1}{2}h\omega_{j,l} \right) \sec^2\left(\frac{1}{4}h\omega_{j,l} \right)
\textmd{Re} \left(\eta^{\langle j \rangle_l}_{j,l} \dot{\eta}^{\langle j \rangle_l}_{j,l}
 \right)(t)\Big\}+\mathcal{O}(\nu^2\eps^2\delta_0^4)\\
=& \sum\limits_{l=1}^{2d} \sum\limits_{j=-N_{\tau}/2}^{N_{\tau}/2} M(l,l)
  \abs{\zeta^{\langle j \rangle_l}_{j,l}}^2(t)+\mathcal{O}\left(\nu^2\eps^2\delta_0^4\right).
\end{aligned}
\end{equation*}
Here we have used the bounds given in  \eqref{coefficient func}  and we further get
$$\frac{h}{4}\sin\left(\frac{1}{2}h\omega_{j,l} \right) \sec^2\left(\frac{1}{4}h\omega_{j,l} \right)
\textmd{Re} \left(\eta^{\langle j \rangle_l}_{j,l} \dot{\eta}^{\langle j \rangle_l}_{j,l}
 \right)(t)=\mathcal{O}\left( \nu\eps^2\delta_0^4\right).$$ We thus obtain the first almost-invariant $\mathcal{I}_1(t)$ of  the
functions in the modulated Fourier expansions.
Using the same arguments, the second  almost-invariant  can be derived and it has the form \begin{equation*}\begin{aligned}
\mathcal{M}(t)= \sum\limits_{l=1}^{2d} \sum\limits_{j=-N_{\tau}/2}^{N_{\tau}/2}
  \abs{\zeta^{\langle j \rangle_l}_{j,l}}^2(t)+\mathcal{O}\left(\nu^2\eps^2\delta_0^4\right).
\end{aligned}
\end{equation*}

In what follows, we will derive the third almost-invariant.
 Multiplying \eqref{lhdy} with $ \big( \dot{\eta}^{-\mathbf{k}}\big)^\intercal$ and summing up implies
\begin{equation}\label{A2}
\begin{aligned} \mathcal{O}\left(h\delta_0^{N+1}\right)=&\frac{1}{h}\sum\limits_{k\in\mathcal{N}^*}
 \left( \dot{\eta}^{-\mathbf{k}}\right)^\intercal \mathbf{J}
L(h\mathcal{D}+i h\mathbf{k}
\cdot\boldsymbol{\omega}) \eta^{\mathbf{k}}+\frac{\mathrm{d}}{\mathrm{d}t}\mathcal{V}(\vec{\eta}(t))\\
=&\frac{1}{h}\sum\limits_{k\in\mathcal{N}^*}
\left(\overline{\dot{\alpha}^{\mathbf{k}}  }\right)^\intercal i  { \Lambda \widetilde{L}(h\mathcal{D}+i h\mathbf{k}
\cdot \boldsymbol{\omega})} \alpha^{\mathbf{k}}+\frac{\mathrm{d}}{\mathrm{d}t}\mathcal{V}(\vec{\eta}(t)).
\end{aligned}
\end{equation}
Using the same arguments as above and the identities {for a column vector $y(t)$:}
\begin{equation*}
\begin{aligned}
&\textmd{Re}\left(\dot{\bar{y}}^\intercal y^{(2l)}\right)=\textmd{Re}
\frac{\mathrm{d}}{\mathrm{d}t}\left(\dot{\bar{y}}^\intercal y^{(2l-1)}-\cdots\mp
\left(\bar{y}^{(l-1)}\right)^\intercal y^{(l+1)}\pm \frac{1}{2}\left(\bar{y}^{(l)}\right)^\intercal y^{(l)}\right),\\
&\textmd{Im}\left(\dot{\bar{y}}^\intercal y^{(2l+1)}\right)=\textmd{Im}
\frac{\mathrm{d}}{\mathrm{d}t}\left(\dot{\bar{y}}^\intercal y^{(2l)}-
\ddot{\bar{y}}^\intercal y^{(2l-1)}+\cdots\mp \left(\bar{y}^{(l)}\right)^\intercal y^{(l+1)}\right),
\end{aligned}
\end{equation*} it is shown that the real part in the
 right-hand side of
\eqref{A2} is a total derivative.  Therefore, there exists a function
$\mathcal{H}_1$ such
 that
$\frac{\mathrm{d}}{\mathrm{d}t}\mathcal{H}_1[\vec{\eta}](t)=\mathcal{O}(h\delta_0^{N+1})$.
 The construction   of $\mathcal{H}_1$ is shown
by considering the previous analysis and  the bounds of Lemma \ref{MFE lem}:
\begin{equation*}\begin{aligned}
\mathcal{H}_1(t)&=V( \vec{\eta}(t))+{
 \sum\limits_{l=1}^{2d} \sum\limits_{j=-N_{\tau}/2}^{N_{\tau}/2}
 \frac{1}{\mathrm{sinc}\big(\frac{1}{2}h\omega_{j,l}\big)}\left(\overline{\eta^{\langle j \rangle_l}_{j,l}}\right)^\intercal \dot{\eta}^{\langle j \rangle_l}_{j,l}+\mathcal{O}\left( \nu\eps\delta_0^4\right)}=V( \vec{\eta}(t))+{\mathcal{O}\left( \nu\eps\delta_0^4\right)} .
\end{aligned}\end{equation*}
Defining $\mathcal{H}=\frac{1}{\eps}\mathcal{I}_1+\mathcal{H}_1$ gives the third almost-invariant,
and it satisfies
$\eps \mathcal{H}(t)=\eps \mathcal{H}(0)+\mathcal{O}(t h\delta_0^{N+1})$.

\textbf{Near conservations.}
We now turn to the results on $I$ and $H$  for the numerical method. We can find that
\begin{equation*}
\begin{aligned}
 \eps I(u^n)=&\left(\widetilde{\mathbf{U}}^{n}\right)^\intercal M\left(\widetilde{\mathbf{U}}^{n}\right) = \sum\limits_{l=1}^{2d} M(l,l)  \norm{\widetilde{\mathbf{U}}^{n}_{:,l}}^2
  =  \sum\limits_{l=1}^{2d} M(l,l) \norm{\widehat{\mathbf{U}}^{n}_{:,l}}^2+\mathcal{O}(  \delta_{\mathcal{F}})\\
  =&   \sum\limits_{l=1}^{2d} M(l,l)
\sum\limits_{j=-N_{\tau}/2}^{N_{\tau}/2} \norm{\zeta^{\langle
{j}\rangle_l}}^2(t_n)+\mathcal{O}\left( \nu^2 \eps^2 \delta_0^4\right)+\mathcal{O}( \delta_{\mathcal{F}})\\
  =&  \sum\limits_{l=1}^{d} M(l,l)
\sum\limits_{j=-N_{\tau}/2}^{N_{\tau}/2} \abs{\zeta^{\langle
{j}\rangle_l}_{j,l}}^2(t_n)+\mathcal{O}\left( \nu^2 \eps^2 \delta_0^4\right)+\mathcal{O}(\delta_{\mathcal{F}})
=\mathcal{I}_1(t_n)+\mathcal{O}\left( \nu^2 \eps^2 \delta_0^4\right)+\mathcal{O}(\delta_{\mathcal{F}}),
\end{aligned}
\end{equation*}
and similarly,
\begin{equation*}
\begin{aligned}
 \eps H(u^n)&=\left(\widetilde{\mathbf{U}}^{n}\right)^\intercal M\left(\widetilde{\mathbf{U}}^{n}\right)+ \eps H_1\left(\widetilde{\mathbf{U}}^{n}\right) =\mathcal{I}_1(t_n)+ \eps V\left(\widehat{\mathbf{U}}^{n}\right)+\mathcal{O}\left( \nu^2 \eps^2 \delta_0^4\right)+\mathcal{O}(  \delta_{\mathcal{F}})\\
  &= \eps \mathcal{H}(t_n)+\mathcal{O}\left(\nu^2\eps^2\delta_0^4\right)+\mathcal{O}(  \delta_{\mathcal{F}}).
\end{aligned}
\end{equation*}
Meanwhile, the quantity  $m(u^n)$ can be expressed as
\begin{equation*}
\begin{aligned}
 m(u^n) = \mathcal{M}(t_n) +\mathcal{O}\left(\nu^2\eps^2\delta_0^4\right)+\mathcal{O}(\delta_{\mathcal{F}}).
\end{aligned}
\end{equation*}
We now use the above results to get
\begin{align*}
 &\eps H(u^n)=  \eps\mathcal{H}(t_n)+\mathcal{O}\left(\nu^2\eps^2\delta_0^4\right)+\mathcal{O}(  \delta_{\mathcal{F}}) = \eps \mathcal{H}(t_{n-1})+h\mathcal{O}\left(h\delta_0^{N+1}\right)
 +\mathcal{O}\left(\nu^2\eps^2\delta_0^4\right)+\mathcal{O}(  \delta_{\mathcal{F}})
  =\ldots\\
   &= \mathcal{H}(t_{0})+nh\mathcal{O}\left(h\delta_0^{N+1}\right)
   +\mathcal{O}\left(\nu^2\eps^2\delta_0^4\right)+\mathcal{O}(  \delta_{\mathcal{F}}) = \eps \mathcal{H}(t_{0})+nh\mathcal{O}\left(h\delta_0^{N+1}\right)
   +\mathcal{O}\left(\eps\delta_0^4\right)+\mathcal{O}(  \delta_{\mathcal{F}})\\
    &=  \eps H(u^0)+\mathcal{O}\left(\eps\delta_0^4\right)+\mathcal{O}(  \delta_{\mathcal{F}}),\\
\end{align*}
as long as $nh^2\delta_0^{N+1} \leq \eps\delta_0^4$.
Using the identical argument gives the estimates
\begin{equation*}
\begin{aligned}
 \eps I(u^n)=\eps I(u^0)+{\mathcal{O}(\eps\delta_0^4)}+\mathcal{O}(  \delta_{\mathcal{F}}),\quad
  m(u^n)= m(u^0)+{\mathcal{O}\left(\eps\delta_0^4\right)}+\mathcal{O}(  \delta_{\mathcal{F}}),
\end{aligned}
\end{equation*}
 and thus we complete the proof of Theorem \ref{Long-time thm} for SE1-TSI.
\end{proof}

\subsection{Analysis for the NLS model}\label{subsec-NS}
At last, we consider the precise example of the NLS equation (\ref{NLS eq}), where the nonlinearity is cubic rather than quadratic as assumed in  Theorem \ref{Long-time thm}. Therefore, improved estimates on the conservations should be expected.

\begin{corollary} Under the conditions of Theorem \ref{Long-time thm}, when applied to the NLS equation \eqref{NLS eq}, SE1-TSI \eqref{S1} or SE2-TSI \eqref{S2}
has the following near-conservation laws
\begin{align}
 \frac{\eps}{\delta_0^2} \left(H\left(u^n\right)- H\left(u^0\right)\right)&={\mathcal{O}\left(\eps\delta_0^4\right)}+\mathcal{O}(  \delta_{\mathcal{F}}),\quad
 \frac{\eps}{\delta_0^2} \left(I\left(u^n\right)- I\left(u^0\right)\right)={\mathcal{O}\left(\eps \delta_0^4\right)}+\mathcal{O}(  \delta_{\mathcal{F}}),\nonumber\\
 \frac{1}{\delta_0^2} \left(m\left(u^n\right) -m\left(u^0\right)\right)&={\mathcal{O}\left(\eps \delta_0^4\right)}+\mathcal{O}(  \delta_{\mathcal{F}}),\quad 0\leq nh\leq \frac{1}{h}\delta_0^{-N+3} \label{SDE-cor}.
\end{align}
\end{corollary}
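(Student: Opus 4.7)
The plan is to rerun the modulated Fourier expansion argument of Sections \ref{subsec2}--\ref{subsec3} and track the improved powers of $\delta_0$ that come from the higher-order vanishing of the nonlinearity. For the NLS equation \eqref{NLS eq}, $H_1(u)=\frac14|u|^4$ (up to the real formulation), so $F(u)=\nabla H_1(u)$ satisfies $F(0)=F'(0)=F''(0)=0$. Consequently the Taylor series of $g$ and of the potential $V$ in \eqref{potential} starts at the cubic and quartic terms respectively, i.e.\ in \eqref{ansatz} and \eqref{newuu} the sums over $m\ge 2$ effectively begin at $m=3$ (and at $m+1=4$ for $\mathcal{V}$).

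With this observation, I would redo the recursive bounds on the modulation functions $\eta^{\mathbf{k}}(t)$. Since every right-hand side in the modulation system now has at least three $\eta$-factors rather than two, each step of the recursion contributes an extra $\delta_0^2$. Concretely, the estimates in \eqref{coefficient func} become
\begin{equation*}
\eta^{\langle j\rangle_l}_{j,l}(t)=\mathcal{O}(\delta_0),\qquad
\dot\eta^{\langle j\rangle_l}_{j,l}(t)=\mathcal{O}\!\left(\tfrac{\eps^2}{h}\nu\delta_0^5\right),\qquad
\eta^{\mathbf{k}}_{j,l}(t)=\mathcal{O}\!\left(\eps\nu\,\delta_0^{|\mathbf{k}|+\text{shift}}\right),\quad \mathbf{k}\neq \langle j\rangle_l,
\end{equation*}
where the ``shift'' records the extra factor of $\delta_0^2$ acquired at each recursive step driven by the cubic nonlinearity; in particular the leading multi-index contributions now carry at least $\delta_0^3$. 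Correspondingly \eqref{coefficient func2} gains the same $\delta_0^2$ improvement, and the remainder bounds in Lemma \ref{MFE lem} still hold with $\delta_0^{N+1}$ replaced by a larger power of $\delta_0$ if desired (but we only need the same $N$).

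Feeding these improved bounds into the construction of $\mathcal{I}_1(t)$, $\mathcal{M}(t)$ and $\mathcal{H}(t)$ in Section \ref{subsec3}, the ``off-diagonal'' remainders $\mathcal{O}(\nu^2\eps^2\delta_0^4)$ that appeared in the theorem are replaced by $\mathcal{O}(\nu^2\eps^2\delta_0^6)$, because every such remainder is a product of at least two non-principal modulation functions, each of which now carries an extra $\delta_0$. Since $\nu=1/(c_1\sqrt\eps)$, this yields $\mathcal{O}(\eps\delta_0^6)$ for the absolute discrepancy between $\eps H(u^n)$, $\eps I(u^n)$, $m(u^n)$ and the corresponding almost-invariants. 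The propagation along $n$-steps uses the same bookkeeping $nh^2\delta_0^{N+1}\le \eps\delta_0^6$ as before and gives the same time window $0\le nh\le h^{-1}\delta_0^{-N+3}$; dividing by $\delta_0^2$ produces exactly the stated bound $\mathcal{O}(\eps\delta_0^4)+\mathcal{O}(\delta_{\mathcal{F}})$.

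The main technical obstacle is checking that the ``shift'' in the power of $\delta_0$ in the bounds on $\eta^{\mathbf{k}}$ propagates cleanly through the recursive ansatz in \eqref{ansatz} for all $|\mathbf{k}|\le N$, and in particular that the convolution structure $\mathbf{k}^1+\mathbf{k}^2+\mathbf{k}^3=\mathbf{k}$ combined with the non-resonance condition \eqref{numerical non-resonance cond} still produces summable bounds. This is a routine but careful induction on $|\mathbf{k}|$, essentially identical to the one implicit in Lemma \ref{MFE lem}, and is the place where the cubic structure of the NLS nonlinearity translates quantitatively into the extra $\delta_0^2$ gain visible in \eqref{SDE-cor}.
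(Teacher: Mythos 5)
Your proposal follows essentially the same route as the paper: rewrite NLS as a real Hamiltonian system with quartic potential, observe that the cubic nonlinearity improves the bounds on the modulation functions $\eta^{\mathbf{k}}$, $\zeta^{\mathbf{k}}$, propagate the gain into the almost-invariants $\mathcal{I}_1$, $\mathcal{M}$, $\mathcal{H}$, and divide by $\delta_0^2$. One accounting point is off, though it does not damage the conclusion: passing from a quadratic to a cubic nonlinearity adds \emph{one} factor of $\mathcal{O}(\delta_0)$ to each term of the modulation system, not $\delta_0^2$ per recursive step as you assert; accordingly the paper's improved bounds are $\dot{\eta}_{j,l}^{\langle j\rangle_l}=\mathcal{O}\big(\tfrac{\eps^2}{h}\nu\delta_0^{4}\big)$ (not $\delta_0^5$) and $\eta_{j,l}^{\mathbf{k}}=\mathcal{O}\big(\eps\nu\delta_0^{|\mathbf{k}|+1}\big)$ with a uniform shift of one. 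Your final step nevertheless lands on the correct remainder $\mathcal{O}(\nu^2\eps^2\delta_0^6)=\mathcal{O}(\eps\delta_0^6)$ because there you count one extra $\delta_0$ per non-principal factor, which is the correct (and the paper's) bookkeeping; you should reconcile that with the earlier claim of a $\delta_0^2$ gain.
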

\begin{proof}
For the NLS equation,   by letting $u = p + \textmd{i}q$ and $\mathcal{K}=hJ\mathcal{M}$ with  {$\mathcal{M}=\binom{\partial_x^2 \ 0}{\ 0 \ \partial_x^2}$} and $J=\binom{\,0\ -1}{1\ \ 0}$,  it can be transformed into an  infinite-dimensional real-valued Hamiltonian system
\begin{equation*}
\frac{\partial y }{\partial t}=\frac{1}{\eps}J\mathcal{M}y+J\nabla U(y), \ \ \
y_0(x)=  \begin{pmatrix}
                        \textmd{Re}(u_0(x)) \\
                                 \textmd{Im}(u_0(x))
                       \end{pmatrix},
\end{equation*}
where $y=\binom{p}{q}$ and $U(y)=-\frac{1}{4}(p^2+q^2)^2 .$
This is exactly the form \eqref{model} with  $M=\mathcal{M}$. 

The cubic nonlinearity leads to smaller bounds of the coefficients $ \eta^{\mathbf{k}}(t)$:
\begin{equation*}%
\begin{array}{ll}
 \eta_{j,l}^{\langle j\rangle_l}(t)=\mathcal{O}(\delta_0),\quad
  \dot{\eta}_{j,l}^{\langle j\rangle_l}(t)=\mathcal{O}\left(\frac{\eps^2}{h}\nu \delta_0^4\right),
 \quad
 \eta_{j,l}^{\mathbf{k}}(t)=\mathcal{O}\Big(\eps \nu \delta_0^{|\mathbf{k}|+1}\Big),\quad   \mathbf{k}\neq \langle j\rangle_l,
\end{array} %
\end{equation*}
and $ \zeta^{\mathbf{k}}(t)$ have the same estimates.
Therefore, the bounds of the remainders in the almost-invariants are modified accordingly. Let us consider $\mathcal{M}(t)$ as an example here. It becomes
\begin{equation*}\begin{aligned}
\mathcal{M}(t)= \sum\limits_{l=1}^{2d} \sum\limits_{j=-N_{\tau}/2}^{N_{\tau}/2}
  \abs{\zeta^{\langle j \rangle_l}_{j,l}}^2(t)+\mathcal{O}\left(\nu\eps^2\delta_0^5\right)
  +\mathcal{O}\left(\nu^2\eps^2\delta_0^6\right).
\end{aligned}
\end{equation*}
Hence,  the mass at the numerical solution has the following relation with $\mathcal{M}$:
\begin{equation*}
\begin{aligned}
 m(u^n)
  =&  \sum\limits_{l=1}^{2d}
\sum\limits_{j=-N_{\tau}/2}^{N_{\tau}/2} \norm{\zeta^{\langle
{j}\rangle_l}}^2(t_n)+\mathcal{O}\left(  \nu^2 \eps^2 \delta_0^6\right)+\mathcal{O}\left( \delta_{\mathcal{F}}\right)\\
  =&  \sum\limits_{l=1}^{2d}
\sum\limits_{j=-N_{\tau}/2}^{N_{\tau}/2} \abs{\zeta^{\langle
{j}\rangle_l}_{j,l}}^2(t_n)+\mathcal{O}\left(  \nu^2 \eps^2 \delta_0^6\right)+\mathcal{O}(\delta_{\mathcal{F}})\\
=&\mathcal{M}(t_n)+\mathcal{O}\left(\nu\eps^2\delta_0^5\right)+\mathcal{O}\left( \nu^2 \eps^2 \delta_0^6\right)+\mathcal{O}(\delta_{\mathcal{F}})
=\mathcal{M}(t_n)+\mathcal{O}\left(\eps\delta_0^6\right)+\mathcal{O}(\delta_{\mathcal{F}}),
\end{aligned}
\end{equation*}
which yields the third estimate of \eqref{SDE-cor}. Similarly, the other two estimates can be shown.
\end{proof}

\section{Conclusion}\label{sec:6con}
We considered in this work a class of Hamiltonian systems with a scaling parameter $\eps\in(0,1]$.  When $\eps\ll1$, the solution of the model is highly oscillatory in time which triggers unbounded temporal derivatives and energy, and classical numerical methods become inaccurate and inefficient. Such model problem comes from many physical equations in some limit parameter regime or some perturbation problems with a time-compression scaling. We solved the problem by the two-scale formulation approach, and we proposed two new numerical integrators that are symmetric in time. By numerical experiments on a H\'{e}non-Heiles model, a cubic Schr\"odinger equation and a charged-particle system, the proposed methods were shown to have not only second order uniform accuracy for all $\eps$ but also good long-term behaviours. We established the uniform convergence of the methods at a finite time, and established the near-conservation laws of the methods at the discrete level in long times by means of the modulated Fourier expansion.

\appendix

\section*{Acknowledgements}
This work is partially supported by the Natural Science Foundation of Hubei Province No. 2019CFA007 and the NSFC 11901440.

\bibliographystyle{model-num-names}

\begin{thebibliography}{00}

\bibitem{Bourgain}
{\sc J. Bourgain}, Global solutions of nonlinear Schr\"odinger equations, Colloquium publications volume 46,
American Mathematical Society, 1999.

\bibitem{Baocai}
{\sc W. Bao, Y. Cai, X. Jia, Q. Tang}, A uniformly accurate multiscale time integrator pseudospectral method for the Dirac equation in the nonrelativistic limit regime, SIAM J. Numer. Anal. 54 (2016) pp. 1785-1812.

\bibitem{BCZ}
{\sc W. Bao, Y. Cai, X. Zhao},
{ A uniformly accurate multiscale time integrator pseudospectral method for the Klein-Gordon equation in the nonrelativistic limit regime},  SIAM J. Numer. Anal. 52 (2014) pp. 2488-2511.
%

\bibitem{PI1}
{\sc S. Baumstark, E. Faou, K. Schratz}, Uniformly accurate exponential-type integrators for Klein-Gordon equations with asymptotic convergence to classical splitting schemes in the NLS splitting, Math. Comp. 87 (2018) pp. 1227-1254.

\bibitem{Feng}
{\sc W. Bao, Y. Feng, C. Su}, Uniform error bounds of a time-splitting pseudospectral discretization for the long-time dynamics of the Klein-Gordon equation with weak nonlinearity, arXiv:2001.10868, preprint.

\bibitem{Baosu}
{\sc W. Bao, C. Su}, Uniformly and optimally accurate methods for the Zakharov system in the subsonic limit regime, SIAM J. Sci. Comput. 40 (2018) pp. A929-A953.

\bibitem{KGC}
{\sc W. Bao, X. Zhao}, Comparison of numerical methods for the nonlinear Klein-Gordon equation in the nonrelativistic limit regime, J. Comput. Phys. 398 (2019) pp. 108886.

\bibitem{UAKG}
{\sc Ph. Chartier, N. Crouseilles, M. Lemou, F. M\'{e}hats},
Uniformly accurate numerical schemes for highly oscillatory Klein-Gordon
and nonlinear Schr\"{o}dinger equations,
Numer. Math. 129 (2015) pp. 211-250.

\bibitem{vp3D}
{\sc Ph. Chartier, N. Crouseilles, M. Lemou, F. M\'{e}hats, X. Zhao}, Uniformly accurate methods for three dimensional Vlasov equations under strong magnetic field with varying direction, SIAM J. Sci. Comput. 42 (2020) pp. B520-B547.

\bibitem{SAV}
{\sc F. Castella, Ph. Chartier, F. M\'{e}hats, A. Murua}, Stroboscopic averaging for the nonlinear Schr\"{o}dinger equation, Found. Comput. Math. 15 (2015)  pp. 519-559.


\bibitem{Cohen thesis}
{\sc D. Cohen}, Analysis and numerical treatment of highly oscillatory differential equations, PhD thesis, Universit\'e de Gen\'eve, 2004.

\bibitem{Cohen0}
{\sc D. Cohen, L. Gauckler},
One-stage exponential integrators for nonlinear
Schr\"odinger equations over long times, BIT Numer. Math. 52 (2012) pp. 877-903.

\bibitem{Cohen1}
{\sc D. Cohen, E. Hairer, Ch. Lubich}, Modulated Fourier expansions of highly
oscillatory differential equations, Found. Comput. Math. 3 (2003) pp. 327-345.

\bibitem{Cohen2}
{\sc D. Cohen, E. Hairer, Ch. Lubich}, Long-time analysis of nonlinearly perturbed wave equations via modulated Fourier expansions, Arch. Ration. Mech. Anal.
187 (2008) pp. 341-368.

\bibitem{Jin}
{\sc N. Crouseilles, S. Jin, M. Lemou}, Nonlinear geometric optics method based multi-scale numerical schemes for highly-oscillatory transport equations, Math. Mod. Meth. App. Sc. 27 (2017) pp. 2031-2070.

\bibitem{2scale multi}
{\sc Ph. Chartier, M. Lemou, F. M\'{e}hats},
Highly-oscillatory evolution equations with multiple frequencies: averaging and numerics,
Numer. Math. 136 (2017) pp. 907-939.

\bibitem{APVP2d}
{\sc N. Crouseilles, M. Lemou, F. M\'{e}hats},
Asymptotic preserving schemes for highly oscillatory Vlasov-Poisson equations,
J. Comput. Phys. 248 (2013) pp. 287-308.

\bibitem{NUA}
{\sc Ph. Chartier, M. Lemou, F. M\'{e}hats, G. Vilmart},
{A new class of uniformly accurate methods for highly oscillatory evolution equations},
Found. Comput. Math.  20 (2020) pp. 1-33.





\bibitem{autoUA}
{\sc Ph. Chartier,  M. Lemou, F. M\'{e}hats, X. Zhao},
Derivative-free high-order uniformly accurate schemes for highly-oscillatory systems, IMA J. Numer. Anal. https://doi.org/10.1093/imanum/drab014 (2021).

\bibitem{UAVP4d}
{\sc N. Crouseilles, M. Lemou, F. M\'{e}hats, X. Zhao},
Uniformly accurate Particle-in-Cell method for the long time two-dimensional Vlasov-Poisson equation with uniform strong magnetic field,  J. Comput. Phys. 346 (2017) pp. 172-190.



%


\bibitem{PI2}
{\sc Y. Cai, Y. Wang}, Uniformly accurate nested Picard iterative integrators for the Dirac equation in the nonrelativistic limit regime, SIAM J. Numer. Anal. 57 (2019) pp. 1602-1624.



\bibitem{Dujardin}
{\sc G. Dujardin, E. Faou}, Normal form and long time analysis of splitting
schemes for the linear Schr\"odinger equation with small potential, Numer. Math.
108 (2007) pp. 223-262.



\bibitem{Faou}
{\sc E. Faou}, Geometric numerical integration and Schr\"{o}dinger equations, European Math. Soc.  (2012).

\bibitem{Faou1}
{\sc E. Faou, B. Gr\'{e}bert, E. Paturel}, Birkhoff normal form for splitting methods applied to semilinear Hamiltonian PDEs. Part I: Finite dimensional discretization, Numer. Math. 114 (2010) pp. 429-458.

\bibitem{Faou2}
{\sc E. Faou, B. Gr\'{e}bert, E. Paturel}, Birkhoff normal form for splitting methods applied to semilinear Hamiltonian PDEs. Part II: Abstract splitting, Numer.
Math. 114 (2010) pp. 459-490.

\bibitem{Frenod}
{\sc E. Fr\'enod, S. Hirstoaga, M. Lutz, E. Sonnendr\"ucker}, Long time behavior of an
exponential integrator for a Vlasov-Poisson system with strong magnetic field, Commun.
Comput. Phys. 18 (2015) pp. 263-296.

\bibitem{MD}
{\sc B. Garc\'{i}a-Archilla, J.M. Sanz-Serna, R.D. Skeel}, Long-time-step methods
for oscillatory differential equations, SIAM J. Sci. Comput. 20 (1999) pp. 930-963.

 \bibitem{Gauckler}
{\sc L. Gauckler}, Numerical long-time energy conservation for the nonlinear
Schr\"{o}dinger equation, IMA J. Numer. Anal. 37 (2017) pp. 2067-2090.

\bibitem{Gauckler thesis}
{\sc L. Gauckler},
Long-time analysis of Hamiltonian
partial differential equations and
their discretizations, PhD thesis, Universitat T\"ubingen, 2010.

\bibitem{Lubich ICM}
{\sc L. Gauckler, E. Hairer, Ch. Lubich}, Dynamics, numerical analysis, and some geometry, Proc. Int. Cong. Math. 1 (2018) pp. 453-486.

\bibitem{Lubich1}
{\sc L. Gauckler, Ch. Lubich}, Nonlinear Schr\"{o}dinger equations  and their spectral semi-discretizations over long times, Found. Comput. Math. 10 (2010) pp. 141-169.


\bibitem{Lubich2}
{\sc L. Gauckler, Ch. Lubich}, Splitting integrators for
nonlinear Schr\"{o}dinger equations over long times, Found. Comput. Math. 10 (2010) pp. 275-302.

\bibitem{Grimm}
{\sc V. Grimm}, On error bounds for the Gautschi-type exponential integrator applied to oscillatory second-order differential equations, Numer. Math. 100 (2005) pp. 71-89.

\bibitem{Henon}
{\sc M. H\'{e}non, C. Heiles},
The applicability of the third integral of motion: Some
numerical experiments, Astronom. J. 69 (1964) pp. 73-79.

\bibitem{Hairer}
{\sc E. Hairer, Ch. Lubich}, Long-time energy conservation of numerical methods
for oscillatory differential equations, SIAM J. Numer. Anal. 38 (2000) pp. 414-441.

\bibitem{LubichW}
{\sc E. Hairer, Ch. Lubich, B. Wang}, A filtered Boris algorithm for charged-particle dynamics in a strong magnetic field, Numer. Math. 144 (2020) pp.  787-809


\bibitem{Lubich}
{\sc E. Hairer, Ch. Lubich, G. Wanner}, Geometric Numerical Integration: Structure-Preserving Algorithms for Ordinary Differential Equations, Springer, Berlin, 2006.


\bibitem{Norsett}
{\sc E. Hairer, S.P. N{\o}rsett, G. Wanner}, Solving ordinary differential equations. I. Nonstiff problems, Second edition. Springer Series in Computational Mathematics, 8. Springer-Verlag,
Berlin, 1993.

\bibitem{Hochbruck1}
{\sc M. Hochbruck, Ch. Lubich}, A Gautschi-type method for oscillatory second-order differential equations, Numer. Math. 83 (1999) pp. 402-426.

\bibitem{Ostermann}
{\sc M. Hochbruck, A. Ostermann}, Exponential integrators, Acta Numer. 19 (2010) pp. 209-286.


\bibitem{Kuksin}
{\sc S.B. Kuksin}, Analysis of Hamiltonian PDEs, vol. 19 of Oxford Lecture Series in
Mathematics and its Applications, Oxford University Press, Oxford, 2000.

{\bibitem{Li} {\sc Y.W. Li, X. Wu}, Exponential integrators preserving first integrals or Lyapunov functions for conservative or dissipative systems, SIAM J. Sci. Comput. 38 (2016) pp. 1876-1895.}


%

\bibitem{Masmoudi}
{\sc N. Masmoudi, K. Nakanishi}, From nonlinear Klein-Gordon equation to a system of coupled nonlinear Schr\"odinger equations, Math. Ann. 324 (2002) pp. 359-389.

\bibitem{AVF}
{\sc R.I. McLachlan, G.R.W. Quispel, N. Robidoux}, Geometric integration using discrete
gradients, R. Soc. Lond. Philos. Trans. Ser. A Math. Phys. Eng. Sci. 357 (1999) pp.
1021-1046.

\bibitem{Shen}
{\sc J. Shen, T. Tang, L. Wang},
Spectral Methods: Algorithms, Analysis and Applications, Springer, 2011.

\bibitem{Sanders}
{\sc J.A. Sanders, F. Verhulst}, Averaging methods in nonlinear dynamical systems,
volume 59 of Applied Mathematical Sciences. Springer-Verlag, New York, 1985.

\bibitem{Trefethen}
{\sc L.N. Trefethen}, Spectral Methods in MATLAB, SIAM, Philadelphia, 2000.

\bibitem{WIW}
{\sc B. Wang, A. Iserles, X. Wu}, Arbitrary-order trigonometric Fourier collocation methods for multi-frequency oscillatory systems, Found. Comput. Math. 16 (2016) pp. 151-181


\bibitem{WW}
{\sc B. Wang, X. Wu}, A long-term numerical energy-preserving analysis of symmetric and/or symplectic extended RKN integrators for efficiently solving highly oscillatory Hamiltonian systems, BIT Numer. Math. 61 (2021) pp. 977-1004.


\bibitem{WZ}
{\sc B. Wang, X. Zhao}, Error estimates of some splitting schemes for
charged-particle dynamics under strong magnetic field, SIAM J. Numer. Anal. 59 (2021) pp. 2075-2105.

\bibitem{Wu}
{\sc X. Wu, X. You, B. Wang}, Structure-Preserving Algorithms for Oscillatory Differential Equations, Springer, Berlin, Heidelberg, 2013, ISBN: 978-3-642-35337-6.


\bibitem{Zhao}
{\sc X. Zhao},
Uniformly accurate multiscale time integrators for second order oscillatory differential equations with large initial data, BIT Numer. Math. 57 (2017) pp. 649-683.





\end{thebibliography}

\end{document}